\documentclass[tbtags,reqno]{amsart}
\usepackage{geometry}
\usepackage{graphicx, subfigure}
\usepackage{epstopdf}
\usepackage{caption}

\usepackage{xfrac}
\usepackage{tikz-cd}
\usetikzlibrary{decorations.pathmorphing}
\usepackage{mathtools}
\usepackage{cleveref}
\usepackage{mathrsfs}
\usepackage{latexsym}
\usepackage{amsthm,amsopn,tabmac,amsfonts,amssymb,epsfig,color}
\numberwithin{equation}{section}
\makeatletter
\renewcommand{\subsubsection}{\@startsection
{subsubsection}
{3}
{0mm}
{\baselineskip}
{-0.5\baselineskip}
{\normalfont\normalsize\bfseries}}
\makeatother

\newtheorem{theorem}{Theorem}
\newtheorem{lemma}[theorem]{Lemma}
\newtheorem{proposition}[theorem]{Proposition}
\newtheorem{example}[theorem]{Example}

\newtheorem{corollary}[theorem]{Corollary}
\newtheorem{definition}[theorem]{Definition}

\newtheorem{remark}[theorem]{Remark}

\title[Cauchy identities for skew Ferrers shapes via RSK and keys]
{Cauchy identities for skew Ferrers shapes via RSK and keys}

\begin{document}

\author{Luis Pena}
\address{Instituto de Matem\'aticas, Universidad de
Talca, 2 norte 685, Talca, Chile.}
\email{luis.cardenas@utalca.cl }

\begin{abstract}
Let $\mu\subseteq\lambda\subseteq(m^n)$. We characterize the image under the ordinary Robinson--Schensted--Knuth correspondence of matrices supported on the skew Ferrers diagram $\lambda/\mu$.
The outer boundary determines an upper bound on the right key of the insertion tableau, while the inner boundary determines a lower bound on its left key; both bounds depend on the keys of the recording tableau.
This yields tableau expansions of skew Ferrers Cauchy kernels using the standard basis polynomials of Lascoux and Schützenberger, indexed by intervals in Bruhat order.

The proof first treats ordinary Ferrers diagrams.
Using the supremum characterization of right keys from earlier work, we follow the $\lambda$-dependent bounds through single RSK insertions.
When $\lambda$ has repeated parts, these weak column bounds need not form a semistandard tableau.
Strictification determines a set $\operatorname{Comp}(\lambda)$ of admissible weak compositions and, for each $\alpha\in\operatorname{Comp}(\lambda)$, a composition $\alpha^\lambda$.
Ordinary RSK then gives a weight-preserving bijective realization of the expansion
\[
\prod_{(i,j)\in\lambda}\frac{1}{1-x_i y_j}
=
\sum_{\alpha\in\operatorname{Comp}(\lambda)}
\hat K_\alpha(x)K_{\alpha^\lambda}(y),
\]
where $\hat K_\alpha$ and $K_\alpha$ denote Demazure atoms and key polynomials, respectively.

We also give a direct admissibility criterion and a parking procedure for computing $\alpha^\lambda$.
After translating conventions, these agree with the admissibility condition and half-bubble-sort construction of Feigin, Khoroshkin, and Makedonskyi.
The staircase and truncated-staircase identities follow as special cases.
Finally, we extend the weak-bound construction to an infinite alphabet, where strictification need not exist, and derive the infinite-variable Cauchy identity for the $m$-symmetric Schur functions.
\end{abstract}

\keywords{RSK correspondence, semistandard Young tableaux, key tableaux, right keys, left keys, Demazure characters, Demazure atoms, key polynomials, nonsymmetric Cauchy identities, skew Ferrers diagrams, Bruhat order, Schützenberger involution}

\maketitle

\section{Introduction}

The Robinson--Schensted--Knuth correspondence is one of the central bijections in algebraic combinatorics.
It associates with every lexicographically ordered biword a pair $(P,Q)$ of semistandard Young tableaux of the same shape, while preserving the weights of the two
rows of the biword.
At the level of generating functions, this gives the classical Cauchy identity \cite{Fulton1996,Stanley_Fomin_1999}
$$
\prod_{i,j}\frac{1}{1-x_i y_j}
=
\sum_{\nu}s_\nu(x)s_\nu(y).
$$

The common shape of the two tableaux is what indexes the two Schur factors by the same partition.

A natural refinement is obtained by restricting the allowed biletters.
If $\lambda$ is a partition, regarded as a Ferrers diagram, one may restrict to biletters $\binom{i}{j}$ satisfying $(i,j)\in\lambda$.
More generally, for $\mu\subseteq\lambda\subseteq(m^n)$ one may restrict to the skew Ferrers diagram $\lambda/\mu$.
The corresponding generating function is
$$
\prod_{\substack{1\leq i\leq n\\
                  \mu_i<j\leq\lambda_i}}
\frac{1}{1-x_i y_j}.
$$
The main question of this article is to determine explicitly which pairs of tableaux arise under ordinary RSK from matrices supported on such a skew Ferrers diagram, and to use this description to obtain nonsymmetric Cauchy expansions of these kernels.

For ordinary Ferrers diagrams, this problem is closely related to Demazure characters and Demazure atoms.
If $\alpha$ is a weak composition, the tableaux satisfying $K_+(T)=K(\alpha)$ generate the Demazure atom $\widehat K_\alpha$, whereas those satisfying $K_+(T)\leq K(\alpha)$ generate the key polynomial $K_\alpha$ \cite{Lascoux1990Schutzenberger}.
Thus restricting the support of a biword from a rectangle to a Ferrers diagram refines the common-shape condition in the classical Cauchy identity by inequalities between keys.

This phenomenon has appeared in several forms.
For staircase shapes, Lascoux obtained a nonsymmetric Cauchy identity in Demazure characters and Demazure atoms \cite{Lascoux2003}; see also \cite{FuLascoux2009} for a treatment in terms of divided-difference operators and extensions to the other classical types.
Mason introduced an analogue of RSK based on semi-skyline augmented fillings \cite{Mason2008}.
Azenhas and Emami used this correspondence for truncated staircases \cite{AzenhasEmami2015} and later treated northwest and southeast near-staircase shapes using Mason's correspondence and growth diagrams \cite{AzenhasEmami2015Growth,AzenhasEmami2014NWSE}.
A crystal-theoretic interpretation and bijective proof of the staircase identity were given by Choi and Kwon \cite{ChoiKwon2018}, and related restrictions of RSK were studied from the viewpoint of crystals and last-passage percolation by Azenhas, Gobet, and Lecouvey \cite{AzenhasGobetLecouvey2024}.

For arbitrary Ferrers diagrams, Lascoux obtained a divided-difference-operator expansion of the corresponding kernel \cite[Theorem~7]{Lascoux2003}.
More recently, Feigin, Khoroshkin, and Makedonskyi obtained the corresponding right Cauchy expansion through standard filtrations and representation-theoretic methods \cite{FeiginKhoroshkinMakedonskyi2026}.
Their indexing is expressed in terms of admissible compositions and a half-bubble-sort construction.
Khoroshkin and Makedonskyi subsequently gave an independent approach based on bubble-sort combinatorics and generalized Howe duality \cite{KhoroshkinMakedonskyi2025}.
These approaches give the Cauchy expansion but do not formulate the restriction as an explicit characterization of the pairs of semistandard tableaux arising under ordinary RSK.
For ordinary Ferrers shapes, after translating conventions, the indexing obtained in \cite{FeiginKhoroshkinMakedonskyi2026} agrees with ours; the main difference lies in the combinatorial model and in the route used to obtain the expansion.
While \cite{FeiginKhoroshkinMakedonskyi2026} uses serpentines and a Pieri rule based on \cite{AssafQuijada2018} which use Kohert diagrams, we use the classical tableaux model of key polynomials and demazure atoms which involves right keys. This model is quite flexible and allows the extension to skew shapes via left keys using only basic machinery.

Our first ingredient is such a characterization for an arbitrary Ferrers diagram $\lambda$.
We define a pair $(P,Q)$ to be $\lambda$-admissible by a family of $\lambda$-dependent inequalities between right keys.
The proof uses the description, developed in \cite{mSchurt0}, of each column of the right key as the supremum of
the decreasing subwords of a suitable column subtableau.
This description is compatible with a single RSK insertion and allows the admissibility inequalities to be propagated locally.

The bounds associated with $\lambda$ are naturally weak column bounds.
When $\lambda$ has repeated parts they need not themselves form a semistandard tableau.
We therefore introduce a strictification procedure, replacing each weak column by the largest strictly increasing column below it.
This produces a set $\operatorname{Comp}(\lambda)$ of admissible weak compositions and, for every $\alpha\in\operatorname{Comp}(\lambda)$, a composition
$\alpha^\lambda$.
Ordinary RSK then restricts to a weight-preserving bijection
$$
\operatorname{RSK}:\mathcal B_\lambda
\longrightarrow
\bigcup_{\alpha\in\operatorname{Comp}(\lambda)}
\mathcal S^\lambda(\alpha)\times\mathcal S(\alpha),
$$
and taking generating functions gives
$$
\prod_{(i,j)\in\lambda}\frac{1}{1-x_i y_j}
=
\sum_{\alpha\in\operatorname{Comp}(\lambda)}
\widehat K_\alpha(x)K_{\alpha^\lambda}(y).
$$
Thus the arbitrary-Ferrers Cauchy expansion is realized directly by ordinary RSK.

The skew case introduces a second boundary and, correspondingly, a second key condition.
Let
$$
\mu^\vee
=
(m-\mu_n,m-\mu_{n-1},\ldots,m-\mu_1)
$$
be the Ferrers diagram obtained by rotating the complement of $\mu$ in $(m^n)$ through $180^\circ$.
If $A^\circ$ denotes the $180^\circ$ rotation of an $n\times m$ matrix $A$, then
$$
\operatorname{supp}(A)\subseteq\lambda/\mu
\quad\Longleftrightarrow\quad
\operatorname{supp}(A)\subseteq\lambda
\quad\text{and}\quad
\operatorname{supp}(A^\circ)\subseteq\mu^\vee.
$$
Moreover, if $\operatorname{RSK}(A)=(P,Q)$, then the standard compatibility of RSK with Schützenberger evacuation gives
$$
\operatorname{RSK}(A^\circ)
=
\bigl(\operatorname{ev}_m(P),\operatorname{ev}_n(Q)\bigr).
$$
Consequently, our Ferrers characterization applied simultaneously to $A$ and $A^\circ$ yields the main skew result: $\operatorname{supp}(A)\subseteq\lambda/\mu$ if and only if $(P,Q)$ is $\lambda$-admissible and $\bigl(\operatorname{ev}_m(P),\operatorname{ev}_n(Q)\bigr)$
is $\mu^\vee$-admissible.

The two conditions have a natural interpretation in terms of the two keys of the insertion tableau.
The outer boundary $\lambda$ determines an upper bound on $K_+(P)$, while the inner boundary $\mu$, after rotation and evacuation, determines a lower bound on $K_-(P)$.
Hence the insertion tableaux occurring for a fixed pair of extremal keys of $Q$ satisfy inequalities of the form
$$
K(\alpha_\mu)
\leq K_-(P)
\leq K_+(P)
\leq K(\beta^\lambda).
$$
Their generating functions are therefore the two-sided standard bases of Lascoux and Schützenberger, indexed by intervals in Bruhat
order.
This gives positive tableau expansions of the skew Ferrers kernels and explains why passing from a Ferrers diagram to a skew Ferrers diagram naturally replaces a one-sided Demazure condition by simultaneous lower and upper key bounds.

The transformations associated with the two boundaries are governed by the same construction.
We give a direct criterion for $\alpha\in\operatorname{Comp}(\lambda)$ and a parking procedure for computing $\alpha^\lambda$.
After translating conventions, this criterion and construction agree with the admissibility condition and half-bubble-sort procedure of Feigin, Khoroshkin, and Makedonskyi.
The lower-bound index associated with $\mu$ is obtained from the same construction by rotating the diagram through $180^\circ$, so no second algorithm is required.

Several known identities are recovered as special cases.
The staircase and truncated-staircase kernels specialize to the corresponding nonsymmetric Cauchy identities above.
A further specialization recovers the Cauchy identity for the $m$-symmetric Schur functions at $t=0$ obtained in \cite{mSchurt0}.
The weak-bound formulation also extends naturally to an infinite alphabet, even when the bounds cannot be strictified into key tableaux, and yields the corresponding infinite-variable $m$-symmetric Cauchy identity.

The paper is organized as follows.
Section~\ref{SecPrelim} recalls the necessary background on tableaux, key polynomials, Demazure atoms, RSK, and left and right keys.
Section~\ref{SecKeylambda} develops the $\lambda$-dependent weak bounds and their strictification, proves the restricted RSK correspondence, and derives the ordinary Ferrers Cauchy identity.
Section~\ref{SecDirectAlphaLambda} gives the existence criterion and parking construction for $\alpha^\lambda$.
Section~\ref{SecSkewFerrers} extends the characterization to skew Ferrers diagrams and derives the corresponding Cauchy expansions.
Section~\ref{SecSpecializations} compares our construction with half-bubble sort and Lascoux's divided-difference-operator expansion, and treats the truncated-staircase and $m$-symmetric specializations.
Finally, Appendix~\ref{AppInfiniteAlphabet} extends the weak-bound construction to infinite alphabets and derives the infinite-variable $m$-symmetric Cauchy identity.

\section{Preliminaries}
\label{SecPrelim}

This section gathers the definitions and notation used throughout the article. Throughout,
\[
\mathbb N=\{0,1,2,\ldots\},
\qquad
\mathbb{Z}_{>0}=\{1,2,3,\ldots\},
\]
and, for \(n\in\mathbb{Z}_{>0}\), we write
\[
[n]=\{1,\ldots,n\}.
\]

\subsection{Partitions, key polynomials, and Demazure atoms.} \label{ssecsym}

A partition $\lambda=(\lambda_1 \geq \lambda_2 \geq \cdots \geq \lambda_k>0)$ is a weakly decreasing sequence of positive integers.
Its degree is $|\lambda|=\lambda_1+\cdots+\lambda_k$ and its length is $\ell(\lambda)=k$. We represent $\lambda$ by a Young diagram with $\lambda_i$ lattice squares in the $i$-th row, from top to bottom (English notation).
Any lattice square $(i,j)$ in the $i$-th row and $j$-th column of a Young diagram is called a cell.
The conjugate partition $\lambda'$ is the partition whose diagram is obtained from that of $\lambda$ by interchanging rows and columns.
A composition $\pmb a=(a_1,\ldots,a_r)$ is a finite sequence of nonnegative integers.

 Let the exchange operator $K_{i,j}$ be such that
$$K_{i,j} f(\dots, x_i,\dots,x_j,\dots)= f(\dots, x_j,\dots,x_i,\dots).$$

We will need two families of divided-difference-operators $\pi_{\sigma},\hat \pi_\sigma$, where
$$
\hat \pi_i = \frac{x_{i+1}}{x_i-x_{i+1}} (1-K_{i,i+1}).
$$
We also have that $\pi_i=\hat \pi_i+1$.
One checks directly that $\hat \pi_i^2=-\hat \pi_i$ and $\pi_i^2=\pi_i$.
We define $\pi_{\mathrm{Id}}=\hat{\pi}_{\mathrm{Id}}=\mathrm{Id}$ and, recursively, if $\sigma'=s_i\sigma$ and $\sigma^{-1}(i)<\sigma^{-1}(i+1)$ (which is to say that $\ell(\sigma')=\ell(\sigma)+1$), then
\[
\pi_{\sigma'}=\pi_i\pi_\sigma,
\qquad
\hat{\pi}_{\sigma'}=\hat{\pi}_i\hat{\pi}_\sigma.
\]

Using these operators, we can define the key polynomials $K_{\alpha}(x_1,\dots,x_n)$ and dual key polynomials ${\hat K_{\alpha}(x_1,\dots,x_n)}$ recursively as:
\begin{equation} \label{defdualkey}
\hat K_{\alpha}(x)= \left \{ 
\begin{array}{ll}
x^{\alpha} & {\rm if~} \alpha_1\geq\alpha_2\geq\cdots\geq\alpha_n \\
\hat \pi_i \hat K_{s_i \alpha}(x) & {\rm if~} \alpha_i < \alpha_{i+1}
\end{array} \right .,
\end{equation}
and
\begin{equation}
\label{defkey}
 K_{\alpha}(x) = \left \{ 
\begin{array}{ll}
x^{\alpha} & {\rm if~} \alpha_1\geq\alpha_2\geq\cdots\geq\alpha_n \\
 \pi_i  K_{s_i \alpha}(x) & {\rm if~} \alpha_i < \alpha_{i+1}
\end{array} \right .,
\end{equation}
where  $\alpha$ is the composition $\alpha=(\alpha_1,\dots,\alpha_n)$.  Key polynomials were introduced in \cite{Demazure1974a,Demazure1974b}, and are also known as Demazure characters, while the dual key polynomials are also known as Demazure atoms.

\subsection{Tableaux, words and the RSK correspondence} \label{ssectab}

For any totally ordered alphabet $\mathcal B$, let $\operatorname{Tab}_{\mathcal B}$ be the set of semistandard Young tableaux in the alphabet $\mathcal B$, that is, the set of fillings of a Young diagram with letters of $\mathcal B$ that are weakly increasing along rows (from left to right) and strictly increasing along columns (from top to bottom).
The elements of $\operatorname{Tab}_{\mathcal B}$ will simply be called semistandard tableaux, and we will denote by $\operatorname{Tab}_{\mathcal B}(\lambda)$ the set of those of shape $\lambda$.
Skew tableaux, that is, fillings of a skew diagram $\lambda/\mu$ obeying the same conditions, will always be qualified as such.
We let $T(i,j)$ stand for the entry of $T$ in cell $(i,j)$, and we let $x^T$ be the monomial in which the power of $x_i$ is the number of occurrences of the letter $i$ in $T$.
Two tableaux $P$ and $Q$ of the same shape $\lambda$ are such that $P \leq Q$ if and only if $P(i,j) \leq Q(i,j)$ for any $(i,j) \in \lambda$.
Given a tableau $T$, we let $w(T)$ be the word obtained by reading the entries of $T$ from left to right and from bottom to top.
The weight of a tableau $T$, denoted by $\operatorname{wt}(T)$, is the sequence of multiplicities of its entries; that is, the component indexed by $b\in\mathcal B$ is the number of occurrences of the letter $b$ in $T$.
In particular, if $\mathcal B=\{1,\ldots,n\}$ and $\operatorname{wt}(T)=(a_1,\ldots,a_n)$, then

\[
x^T=x^{\operatorname{wt}(T)}=x_1^{a_1}\cdots x_n^{a_n}.
\]

For $x=(x_1,\ldots,x_n)$, the Schur polynomial is
\[
s_\lambda(x_1,\ldots,x_n)
=
\sum_{T\in\operatorname{Tab}_{[n]}(\lambda)}
x^{\operatorname{wt}(T)}.
\]
The corresponding Schur function is obtained by allowing entries in the infinite alphabet $\mathbb{Z}_{>0}$.

Recall that two words $u$ and $w$ are Knuth equivalent, which we denote by $u \equiv w$, if one can be obtained from the other by a sequence of elementary Knuth transformations $(K')$ or $(K'')$:
\begin{align*}
  (K'): \quad y x z& \equiv  y z x, {\rm~for~}x < y \leq z \\
  (K''): \quad x zy & \equiv  z x y, {\rm~for~}x \leq y < z. 
\end{align*}

We finally recall the row insertion algorithm and the RSK correspondence, referring to \cite{Stanley_Fomin_1999,Fulton1996} for the details.
Given a tableau $T$ and a letter $j$, the tableau $T \leftarrow j$ is obtained by inserting $j$ in the first row of $T$, where it bumps the leftmost entry strictly larger than $j$ (if there is no such entry, $j$ is appended at the end of the row and the algorithm stops); the bumped entry is then inserted in the next row in the same fashion, and so on until an entry is appended at the end of a row.
The entries $c_{k-1},\dots,c_1,c_0(=j)$ successively inserted in this process, where $c_{k-1}$ is the entry appended in row $k$, form what we call the insertion path of $T \leftarrow j$.
Note that $w(T \leftarrow j)$ is Knuth equivalent to $w(T)j$.

Given a biword
$$
\left(
\begin{array}{cccc}
i_1 & i_2 & \cdots & i_r \\
j_1 & j_2 & \cdots & j_r
\end{array}  
\right)
$$
in lexicographic order (that is, such that $i_1 \leq i_2 \leq \cdots \leq i_r$, and such that $j_k\leq j_{k+1}$ whenever $i_k = i_{k+1}$), the RSK correspondence builds a pair $(P,Q)$ of tableaux of the same shape by inserting successively the letters $j_1,\dots,j_r$ in $P$, the tableau $Q$ recording in which cell the shape has grown at each step.  It is a bijection between lexicographically ordered biwords and pairs of tableaux of the same shape.

\subsection{Right key tableaux and their characterizations} \label{seckeys}

The right key is the combinatorial invariant governing the tableau expansions of key polynomials and the admissibility conditions introduced in the next section.
We recall below its relationship with key polynomials and the characterization by decreasing subwords established in \cite{mSchurt0}.
For the classical construction, see \cite{Lascoux1990Schutzenberger} and \cite[Appendix~A5]{Fulton1996}; for direct procedures for computing keys, see~\cite{Willis2013,KushwahaRaghavanViswanath2025}.

A combinatorial formula for the key polynomials \eqref{defkey} in terms of key tableaux was obtained in \cite{Lascoux1990Schutzenberger}, and it is this characterization that we will use throughout the article.
Unless stated otherwise, all tableaux appearing in this subsection are taken in the alphabet $\mathbb Z_{>0}$.

\begin{definition}
    Let $T$ be a tableau with $\ell$ columns, and let $C_1,C_2,\ldots,C_\ell$ denote its columns from left to right.
We say that $T$ is a \emph{key tableau} if
\[
C_i \supseteq C_j
\qquad\text{whenever }1\le i<j\le \ell .
\]
\end{definition}

\begin{example} The tableau 
\tableau[scY]{
1 & 2 & 2 \\
2 & 3 \\
3
}
is a key tableau because $\{1,2,3 \} \supseteq \{ 2,3 \} \supseteq \{2\}$.
\end{example}

\begin{example} The tableau
\tableau[scY]{
1 & 3 \\
2
}
is not a key tableau because $\{ 1,2\}$ does not contain $\{ 3 \}$.
\end{example}

Given a weak composition \(\alpha=(\alpha_1,\ldots,\alpha_n)\in\mathbb N^n\), let \(\alpha^+\) be the partition obtained by rearranging the entries of \(\alpha\) in weakly decreasing order.
We denote by \(K(\alpha)\) the unique key tableau of shape \(\alpha^+\) and content \(\alpha\).

For weak compositions $\alpha$ and $\beta$ with $\alpha^+=\beta^+$, the entrywise order on their key tableaux agrees with Bruhat order:
\[
K(\alpha)\leq K(\beta)
\qquad\Longleftrightarrow\qquad
\alpha\leq\beta.
\]
This follows from the tableau criterion for Bruhat order \cite[Theorem~2.6.3]{BjornerBrenti2005}, applied to the corresponding minimal coset representatives.

The remainder of this subsection is adapted from~\cite[Section 4.2]{mSchurt0}. 
Proofs of the quoted results may be found there.

We recall the following characterization of right key tableaux, based on suprema of decreasing words.

We use the ordered alphabet
\[
\mathcal A_m=\{1,2,\ldots,m\},
\qquad
1<2<\cdots<m.
\]
When no confusion can arise, we omit the subscript and write $\mathcal A$ instead of $\mathcal A_m$.

\begin{definition}
\label{DefOrderWords}
A word $w=w_1 \cdots w_t$
is said to be decreasing if $w_1 > w_2 > \cdots > w_t$.
Let $W( \mathcal{A} )$ be the set of \textit{decreasing} words on $\mathcal{A}$.
Given $v,w \in W ( \mathcal{A} )$, with $v= v_{1} \ldots v_{s} , w = w_{1} \ldots w_{t}$, we say that $v \leq w$ iff $s \leq t$ and $v_{i} \leq  w_{i}$ for $1 \leq i \leq s$. We write $v\subseteq w$ to indicate that $v$ is a subword of $w$.
For a nonempty set of decreasing words $A$, define $t = \max \{ |b|, b \in A\}$, and set $\sup A = w_{1} \cdots w_{t}$ where
$$
w_{i} = \max \{ b_{i} | b_{1} \cdots b_{j} \in A, j \geq i\}.
$$
\end{definition}
We denote by $\overline{W}(\mathcal A)$ the set of weakly decreasing words on $\mathcal A$. We extend the order $\leq$ to $\overline{W}(\mathcal A)$ by the same rule: if $v=v_1\cdots v_s$ and $w=w_1\cdots w_t$ are weakly decreasing, then
\[
v\leq w
\quad\Longleftrightarrow\quad
s\leq t
\text{ and }
v_i\leq w_i
\text{ for }1\leq i\leq s.
\]
We use $\sup$ only for sets of strictly decreasing words.
Weakly decreasing words will occur only as bounds.
When we write $v\subseteq w$ for weakly decreasing words, containment is understood in the sense of subwords, with repeated occurrences counted with multiplicity.

\begin{proposition}\cite[Proposition 17]{mSchurt0}.
    Let $A$ be a nonempty set of words in $W( \mathcal{A} )$. Then $\sup A \in W(\mathcal{A})$, that is, $\sup A$ is a decreasing word.
\end{proposition}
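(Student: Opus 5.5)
To prove that $\sup A$ is strictly decreasing, I would argue directly from the definition. Write $\sup A=w_1\cdots w_t$ with $t=\max\{|b| : b\in A\}$, where
\[
w_i=\max\{b_i : b=b_1\cdots b_j\in A,\ j\ge i\}.
\]
First, each $w_i$ is well defined, because the set over which the maximum is taken is nonempty whenever $i\le t$: a word of length $t$ lies in $A$, so it contributes to every index $i\le t$. Each $w_i$ is then a maximum of finitely many letters of $\mathcal A$, hence lies in $\mathcal A$ (which is finite anyway).

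The only substantive point is the strict inequality $w_i>w_{i+1}$ for $1\le i<t$. Choose a word $b=b_1\cdots b_j\in A$ with $j\ge i+1$ that realizes the maximum at position $i+1$, so that $b_{i+1}=w_{i+1}$. Since $j\ge i+1\ge i$, the same word $b$ also competes in the maximum defining $w_i$, which gives $w_i\ge b_i$. Because $b$ is decreasing, $b_i>b_{i+1}$. Combining these,
\[
w_i\ge b_i>b_{i+1}=w_{i+1}.
\]
Hence $w_1>w_2>\cdots>w_t$, so $\sup A\in W(\mathcal A)$.

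There is essentially no obstacle here. The one step that needs care is noticing that the word attaining $w_{i+1}$ has length at least $i+1$, so it is automatically admissible in the maximum defining $w_i$. This is exactly why the definition restricts the maximum at position $i$ to words of length $j\ge i$.
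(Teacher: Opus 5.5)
Your proof is correct and complete: the word attaining $w_{i+1}$ has length at least $i+1$, so it also competes in the maximum at position $i$, which gives $w_i\geq b_i>b_{i+1}=w_{i+1}$. Since decreasing words over $\{1,\ldots,m\}$ have length at most $m$, the set $W(\mathcal A)$ is finite, so $t$ and every maximum are attained. The paper does not reprove this statement and simply defers to \cite[Proposition~17]{mSchurt0}; your direct argument from the definition is the natural one.
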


\begin{remark}
    The word $\sup A$ defined above is indeed the supremum of $A$ with respect to the partial order on $W(\mathcal{A})$.
\end{remark}

\begin{proposition}\cite[Proposition 19]{mSchurt0}.
  \label{IncImpLeq} If $v$ and $w$ are two decreasing words such that
   $v \subseteq w$, then $v \leq w$.
\end{proposition}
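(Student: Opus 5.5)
We argue by induction on the position in $v$, using only that both words are strictly decreasing. Write $v=v_1\cdots v_s$ and $w=w_1\cdots w_t$. Since $v\subseteq w$, there are indices $1\le k_1<k_2<\cdots<k_s\le t$ with $v_i=w_{k_i}$ for every $i$. In particular $s\le t$, which is the length condition in Definition~\ref{DefOrderWords}. It remains to show $v_i\le w_i$ for $1\le i\le s$.

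The key observation is that $k_i\ge i$ for every $i$, since the $k_i$ are strictly increasing positive integers. We check this by induction: $k_1\ge 1$, and $k_{i+1}\ge k_i+1\ge i+1$. Because $w$ is strictly decreasing, $w_j$ is a decreasing function of the index $j$. Hence $k_i\ge i$ gives $w_{k_i}\le w_i$, that is, $v_i\le w_i$. This gives $v\le w$.

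There is essentially no obstacle here. The only point to be careful about is the direction of monotonicity: a subword of a decreasing word picks entries from later (hence smaller or equal) positions. Weak decrease of $w$ would already suffice, so the same argument also works for the extended order on $\overline{W}(\mathcal A)$, with subwords counted with multiplicity.
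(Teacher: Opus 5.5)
Your proof is correct and complete: the embedding indices satisfy $k_i\geq i$, so monotonicity of $w$ gives $v_i=w_{k_i}\leq w_i$, and $s\leq t$ is immediate. The paper gives no proof of its own here (it quotes the result from \cite{mSchurt0}), and your index argument is the natural one; your remark that weak decrease of $w$ suffices also fits the paper's later use of $\subseteq$ for weakly decreasing words counted with multiplicity.
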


Given a word $w$, we let $W(w)$ be the set of its decreasing subwords, including the empty word $\varepsilon$.
From now on, a tableau $T$ will always be such that either $T \in \operatorname{Tab}_{\mathcal A}$ or $T \in \operatorname{Tab}_{[n]}$.
Recalling that $w(T)$ stands for the reading word of $T$, we let $W(T)$ be the set of decreasing subwords of $w(T)$, that is, $W(T)=W(w(T))$.

\begin{example}
Let 
$$
T = \tableau[scY]{
1 & 3 & 3 & 5 \\
2 & 4 & 6 \\
}
$$
Then $w(T) = 2 4 6 1 3 3 5$, and
$$
W(T) = \{65,63,61,43,41,21,6,5,4,3,2,1,\varepsilon\}.
$$
Therefore $\sup W(T) = 65$.
\end{example}

The whole point of taking a supremum over decreasing subwords is that the result only depends on the Knuth class of the word.
This is the content of the next lemma, in which $(K')$ and $(K'')$ are the elementary Knuth transformations introduced in Section~\ref{ssectab}.
\begin{lemma}\cite[Lemma 22]{mSchurt0}.
  \label{SupKnuthEquivalent}
If $w_1  \equiv w_2$, then
$$
\sup W(w_1) = \sup W(w_2).
$$
\end{lemma}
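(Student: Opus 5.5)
It suffices to treat a single elementary Knuth transformation: if $w_2$ is obtained from $w_1$ by one application of $(K')$ or $(K'')$ inside some context $u\,\cdot\,v$, then the general case follows by induction on the length of a chain of such transformations. So I fix words $u,v$ and letters $x,y,z$, and compare $W(u\,yxz\,v)$ with $W(u\,yzx\,v)$ for $x<y\le z$. Similarly I compare $W(u\,xzy\,v)$ with $W(u\,zxy\,v)$ for $x\le y<z$. The sets of decreasing subwords need not coincide, so the plan is to show that each decreasing subword of one word is dominated, in the order of Definition~\ref{DefOrderWords}, by some decreasing subword of the other. By symmetry this gives equality of the suprema, since $\sup$ is the least upper bound and each set is then bounded by the other's supremum.

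A decreasing subword $b$ of $u\,a_1a_2a_3\,v$ uses a decreasing prefix from $u$, then at most two of the three middle letters (since $x<y\le z$ or $x\le y<z$ allows at most a strictly decreasing pair), then a suffix from $v$. I split into cases according to which middle letters are used. If the middle part of $b$ is empty, a single letter, or a pair that remains decreasing and in the same relative order in the other word, then $b$ itself is a subword of the other word. For example, for $(K')$ the pair $yx$ appears in both $yxz$ and $yzx$; a lone letter always appears. The problematic pairs are those whose order is destroyed.

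For $(K')$: $yxz \to yzx$. Decreasing pairs in $yxz$ are only $yx$ (note $xz$ is increasing, $yz$ is not strictly decreasing), and $yx$ survives in $yzx$. Conversely, $yzx$ has the new decreasing pair $zx$. I replace $z$ by $y$, giving $yx$, which is a subword of $yxz$. This is valid because the prefix preceding the pair ends with a letter $>z\ge y$, and the resulting word is entrywise $\le$ the original (since $y\le z$). For $(K'')$: $xzy\to zxy$. In $xzy$ the only decreasing pair is $zy$, which survives in $zxy$. In $zxy$ the new pair is $zx$; I replace it by $zy$ from $xzy$. This replacement is larger entrywise ($y\ge x$), which is fine in the direction we need only if I argue it the other way: $zx$ in $zxy$ must be dominated by something in $xzy$, and $zy$ dominates it. Here the suffix after $x$ consists of letters $<x\le y$, so $zy$ followed by the same suffix is still decreasing.

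So in every case each decreasing subword of one word is $\le$ some decreasing subword of the other. The main care point is checking that the substituted word stays strictly decreasing at the junctions with the prefix from $u$ and the suffix from $v$. The inequalities $x<y\le z$ and $x\le y<z$ are exactly what make these junction checks work. I expect this bookkeeping, rather than any conceptual step, to be the only obstacle. The conclusion $\sup W(w_1)\le \sup W(w_2)$ and the reverse inequality then give equality.
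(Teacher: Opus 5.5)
There is a genuine gap in the $(K')$ case. It breaks the intermediate claim your whole strategy rests on.

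Your reduction to a single elementary move is fine, and so is your $(K'')$ case: there $W(u\,xzy\,v)\subseteq W(u\,zxy\,v)$, and $p\,z\,x\,s$ is dominated by $p\,z\,y\,s$.

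In $(K')$, take $b=p\,z\,x\,s\in W(u\,yzx\,v)$. You need a word of $W(u\,yxz\,v)$ that is $\ge b$. But $p\,y\,x\,s$ is $\le b$, as you yourself note, so the replacement goes the wrong way. No other single word can repair this. With $x=1<y=2\le z=3$ we have $213\equiv 231$ by $(K')$, and $31\in W(231)$. However, $W(213)=\{\varepsilon,1,2,3,21\}$ contains no word $\ge 31$, since its only word of length $2$ is $21$. So the claim \emph{each decreasing subword of one word is dominated by some decreasing subword of the other} is false. This holds even though $\sup W(213)=\sup W(231)=31$.

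The fix is to prove a weaker statement, which your closing least-upper-bound step already accommodates: every $b\in W(w_2)$ satisfies $b\le\sup W(w_1)$. Because $\sup$ is computed position by position, different positions of $b$ may be witnessed by different subwords.

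In the bad case, both $p\,z$ (using the third middle letter) and $p\,y\,x\,s$ lie in $W(u\,yxz\,v)$. This is because the last letter of $p$ exceeds $z\ge y$, and the first letter of $s$ is smaller than $x$. The word $p\,z$ witnesses the letter $z$ in position $|p|+1$. The word $p\,y\,x\,s$ has length $|b|$ and agrees with $b$ in every other position. Hence $b\le\sup W(u\,yxz\,v)$.

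In the remaining cases single-word domination is still valid, since $W(u\,yxz\,v)\subseteq W(u\,yzx\,v)$. With this change the argument is complete.
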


Since we do not use the classical construction of the right key, we
recall only the following characterization from
\cite[Section~4.2]{mSchurt0}. For a tableau $T$ with $\ell$ columns and $1\leq r\leq\ell$, let $T_r$ be the subtableau of $T$ obtained by considering only the columns of $T$ weakly to the right of column $r$, and set
\[
\mathcal C_r(T)=\sup W(T_r).
\]

\begin{proposition}\cite[Proposition 27]{mSchurt0}.
\label{IgualdadKeys}
The word \(\mathcal C_r(T)\) is the bottom-to-top column word of
column \(r\) of the classical right key \(K_+(T)\). Consequently,
the right key is uniquely determined by $\mathcal C_r(T)$ with $1\leq r\leq\ell$.
\end{proposition}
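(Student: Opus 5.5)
We have to prove Proposition~\ref{IgualdadKeys}: $\mathcal C_r(T)=\sup W(T_r)$ is the bottom-to-top reading of column $r$ of $K_+(T)$. The plan is to reduce it to the classical jeu-de-taquin description of the right key and then use Knuth invariance of the supremum, Lemma~\ref{SupKnuthEquivalent}.

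\emph{Step 1: classical description of column $r$.} Recall the construction from \cite[Appendix~A5]{Fulton1996}. Let $c$ be the length of column $r$. Column $r$ of $K_+(T)$ is the rightmost column of the unique tableau of shape $(c,\ldots)$ (columns of lengths $c$ and the column lengths of $T_{r+1}$) Knuth equivalent to $w(T_r)$. Equivalently, it is obtained by rectifying $T_r$ in a suitable skew shape so that the leftmost column has length $c$, and reading off the last column. Call this column $D$, with entries $d_1<\cdots<d_k$.

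\emph{Step 2: transfer to a convenient word.} Let $S$ be a tableau Knuth equivalent to $w(T_r)$ whose rightmost column is $D$; for instance, take $S$ to be the skew (anti-normal) tableau of Step~1. By Lemma~\ref{SupKnuthEquivalent}, $\sup W(T_r)=\sup W(w(S))$. So it suffices to show $\sup W(w(S))=d_k\cdots d_1$.

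\emph{Step 3: compute the supremum on $S$.} This splits into a lower bound and an upper bound.

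For the lower bound, the column $D$ itself contributes the decreasing subword $d_k d_{k-1}\cdots d_1$. In the reading word of the anti-normal shape, column $D$ is read bottom to top, so this subword appears. Hence $\sup\ge d_k\cdots d_1$.

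For the upper bound, take any decreasing subword $b_1>\cdots>b_t$ of $w(S)$. Its letters lie in distinct rows, taken in reading order from bottom to top. The key fact is that in the anti-normal shape every row ends in column $D$, and entries weakly increase along rows. So a letter in row $i$ is at most the entry of $D$ in that row. This gives $t\le k$, since $D$ is the longest column. It also gives $b_j\le d_{k-j+1}$: the rows used by $b_1,\dots,b_t$ are $t$ distinct rows from the bottom upward, and $b_j$ lies in row at most $k-j+1$ from the top, using column strictness of $D$. Therefore every element of $W(S)$ is $\le d_k\cdots d_1$, and the supremum is exactly this word.

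\emph{Main obstacle.} The delicate point is Step~1/2: producing a Knuth-equivalent tableau whose row ends all lie in the target column, so that rows are dominated by that column. If the anti-normal presentation is awkward, the alternative is to use the characterization of $K_+$ via Lascoux--Schützenberger: column $r$ of $K_+(T)$ equals the last column of the rectification of $T_r$ into anti-normal (reverse-rectified) form, which is exactly the right-justified shape. That is the form used in Step~3. The uniqueness statement then follows because $K_+(T)$ is determined by its columns, and each column $r$ equals $\mathcal C_r(T)$.
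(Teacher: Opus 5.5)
Your argument is correct, and it follows the route the paper sets up. The paper does not reprove the statement; it imports it from \cite{mSchurt0} right after Lemma~\ref{SupKnuthEquivalent}, and your proof uses exactly that Knuth invariance to move $\sup W(T_r)$ to the anti-normal (reverse-rectified) form of $T_r$. There the rightmost column is column $r$ of $K_+(T)$, and since every row ends in it, reading it bottom to top gives a decreasing subword that dominates all others.

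The only thing to repair is the wording of Step~1. A normal-shape tableau Knuth equivalent to $w(T_r)$ is just $T_r$ itself, whose last column is column $\ell$ of $T$, and it is the rightmost rather than the leftmost column that must have length $c$. Your final paragraph states the correct anti-normal characterization, which is precisely what Step~3 uses.
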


We conclude this subsection by recalling the tableau formulas for Demazure atoms and key polynomials.

For a weak composition $\alpha$, the Demazure atom indexed by \(\alpha\) is given by
\begin{equation}
\label{EqAtomTableau}
\hat K_\alpha(x_1,\ldots,x_n)
=
\sum_{\substack{
T\in\operatorname{Tab}_{[n]}(\alpha^+)\\
K_+(T)=K(\alpha)}}
x^{\operatorname{wt}(T)},
\end{equation}
whereas the key polynomial indexed by \(\alpha\) is given by
\begin{equation}
\label{EqKeyTableau}
K_\alpha(x_1,\ldots,x_n)
=
\sum_{\substack{
T\in\operatorname{Tab}_{[n]}(\alpha^+)\\
K_+(T)\leq K(\alpha)}}
x^{\operatorname{wt}(T)}.
\end{equation}
These formulas follow from the characterization of Demazure atoms and Demazure characters by right keys; see~\cite{Lascoux1990Schutzenberger}. In particular,
\begin{equation}
\label{EqKeyIntoAtoms}
K_\alpha
=
\sum_{\substack{
\beta^+=\alpha^+\\
K(\beta)\leq K(\alpha)}}
\hat K_\beta.
\end{equation}

\subsection{Left keys and Schützenberger evacuation}
\label{SecLeftKeysEvacuation}

We recall the left-key analogue of the right key and its relation with Schützenberger evacuation.
For a positive integer $r$, let
\[
\operatorname{ev}_r:\operatorname{Tab}_{[r]}
\longrightarrow
\operatorname{Tab}_{[r]}
\]
denote the Schützenberger involution on semistandard tableaux with entries in $[r]$; see, e.g., \cite[Appendix~A.1]{Fulton1996}.
It preserves the shape of a tableau and reverses its weight.
More precisely, if
\[
\omega_r(a_1,\ldots,a_r)=(a_r,\ldots,a_1),
\]
then $\operatorname{wt}\bigl(\operatorname{ev}_r(T)\bigr)
=
\omega_r\operatorname{wt}(T)$.

The left key of a tableau $T\in\operatorname{Tab}_{[r]}$ is denoted by $K_-(T)$, while $K_+(T)$ denotes its right key.
In type $A$, the Lusztig--Schützenberger involution is Schützenberger evacuation, and it exchanges left and right keys; see \cite[Theorem~2.14]{AzenhasGonzalezHuangTorres2024}.
Thus
\[
K_+\bigl(\operatorname{ev}_r(T)\bigr)
=
\operatorname{ev}_r\bigl(K_-(T)\bigr),
\qquad
K_-\bigl(\operatorname{ev}_r(T)\bigr)
=
\operatorname{ev}_r\bigl(K_+(T)\bigr).
\]
Equivalently,
\[
K_-(T)
=
\operatorname{ev}_r\!\left(
K_+\bigl(\operatorname{ev}_r(T)\bigr)
\right).
\]
For $\alpha\in\mathbb N^r$, since evacuation preserves key tableaux and reverses their content, we have
\[
\operatorname{ev}_r\bigl(K(\alpha)\bigr)=K(\omega_r\alpha).
\]
Evacuation reverses the entrywise order on key tableaux of the same shape.
Indeed, if the entries of a column of $K(\alpha)$ are
\[
a_1<\cdots<a_k,
\]
then the corresponding column of $K(\omega_r\alpha)$ has entries
\[
r+1-a_k<\cdots<r+1-a_1.
\]
Consequently,
\[
K(\alpha)\leq K(\beta)
\qquad\Longleftrightarrow\qquad
K(\omega_r\alpha)\geq K(\omega_r\beta).
\]
Since
\[
\operatorname{ev}_r\bigl(K(\alpha)\bigr)=K(\omega_r\alpha),
\]
we obtain, for key tableaux $K$ and $K'$ of the same shape,
\[
K\leq K'
\qquad\Longleftrightarrow\qquad
\operatorname{ev}_r(K')\leq\operatorname{ev}_r(K).
\]
In particular, an upper bound on the right key of an evacuated tableau is equivalent, after applying evacuation, to a lower bound on the left key of the original tableau.

We will also use the standard fact that, for every tableau $T$, $K_-(T)\leq K_+(T)$; see \cite{Lascoux1990Schutzenberger}.

These facts will be used in Section~\ref{SecSkewFerrers} to translate the inner boundary of a skew Ferrers diagram into a lower left-key bound.

\section{$\lambda$-dependent bounds and restricted RSK}
\label{SecKeylambda}

\subsection{$\lambda$-dependent bounds and strictification}

We now introduce the $\lambda$-dependent fillings and the strictification procedure underlying the proof of the main theorem.
To avoid overloading the subscripts, we use the following notation. For a partition $\lambda$, write
\[
\lambda[i]=
\lambda_i,\quad 1\leq i\leq\ell(\lambda) .
\]
When needed, we append trailing zeros to partitions and set $\lambda[i]=0$ for $i>\ell(\lambda)$.
For a boundary contained in $(m^n)$, indexing compositions are regarded as elements of $\mathbb N^n$, and their transforms as elements of $\mathbb N^m$, by appending trailing zeros when necessary.

\begin{definition}
Let $\lambda=(\lambda_1,\ldots,\lambda_n)$ be a partition and let $\alpha\in\mathbb N^n$. Define $B_+^\lambda(\alpha)$ to be the filling of the diagram $\alpha^+$ obtained column by column from $K(\alpha)$.
If column $r$ has length $c_r$, set
\begin{equation}
\label{lambda-key}
\bigl(B_+^\lambda(\alpha)_r\bigr)_i
=
\lambda\left[
\bigl(K(\alpha)_r\bigr)_{c_r+1-i}
\right],
\qquad
1\leq i\leq c_r.
\end{equation}
Thus, the entries of each column of $K(\alpha)$ are read from bottom to top before applying the map $\lambda[\cdot]$.
\end{definition}

\begin{remark}
In general, $B_+^\lambda(\alpha)$ is weakly increasing down its columns under the usual tableau convention, and weakly increasing along its rows.
If $\lambda$ has no repeated parts, then its columns are strictly increasing and $B_+^\lambda(\alpha)$ is an SSYT.
Thus, $B_+^\lambda(\alpha)$ is not generally a semistandard tableau; rather, it is simply a filling of the shape $\alpha^+$.
\end{remark}

\begin{remark}\label{remarklambda}
The construction of $B_+^\lambda(\alpha)$ can be described in terms of an operation on columns.
If the entries in a column of $K(\alpha)$, read from top to bottom, are
\[
w_1,\dots,w_r,
\]
then the corresponding entries in the reversed column of $B_+^\lambda(\alpha)$ are
\[
\lambda[w_r],\dots,\lambda[w_1].
\]
We will refer to the map
\[
w_1\cdots w_r
\longmapsto
\lambda[w_r]\cdots\lambda[w_1]
\]
as the $\lambda$-operation. For a decreasing word
\[
w=w_1\cdots w_r,
\]
the entries of $w^\lambda$ are obtained by applying the boundary function $\lambda[\cdot]$ to the entries of $w$ and then reversing their order.
\end{remark}

\begin{remark}
\label{AlmostKey}
Let $\lambda$ be a partition and let $\alpha$ be a composition. If
$s \geq t$, then
\[
(K(\alpha))_{s} \subseteq (K(\alpha))_{t},
\]
as $K(\alpha)$ is a key tableau. Thus
\[
(B_{+}^{\lambda}(\alpha))_{s}
\subseteq
(B_{+}^{\lambda}(\alpha))_{t}
\]
as weakly decreasing words, since for each $r$, $(B_{+}^{\lambda}(\alpha))_{r}$ is obtained from $(K(\alpha))_{r}$ by applying the $\lambda$-operation $\lambda[\cdot]$ entrywise and then reordering the resulting entries.
\end{remark}

\begin{definition}
Let $c$ be a weak column, written from bottom to top as $c_1\geq\cdots\geq c_k$.
We define the strictification of $c$ as 
$$
d_{1}=c_{1}, \qquad  d_{i} = \min \{ d_{i-1}-1,c_{i} \}, \quad (1 < i \leq k).
$$   
Then $d_{1}>d_{2}>\cdots>d_{k}$ and $d\leq c$. We write
$\operatorname{str}(c)=d$.
\end{definition}
\begin{proposition}
\label{PropStrictification}
    The strictification of $c$ is the largest strictly increasing column bounded by $c$.
\end{proposition}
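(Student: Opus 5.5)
We prove directly that $d=\operatorname{str}(c)$ satisfies the conditions and dominates every competitor entrywise. Throughout we read columns from bottom to top, so a strictly increasing column corresponds to a strictly decreasing sequence $e_1>e_2>\cdots>e_k$. "Bounded by $c$" means $e_i\le c_i$ for all $i$, that is, $e\le c$ in the order of Definition~\ref{DefOrderWords} for words of the same length. We work within integers (entries in $\mathcal A$ or $[n]$), with the understanding that positivity of entries is not required for the extremality statement.

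The first step verifies that $d$ is admissible, as already noted in the definition. We have $d_1=c_1$. For $i>1$, $d_i=\min\{d_{i-1}-1,c_i\}$ gives $d_i\le d_{i-1}-1<d_{i-1}$ and $d_i\le c_i$. Hence $d$ is strictly decreasing and $d\le c$.

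The second step proves maximality by induction on $i$. Let $e$ be any strictly decreasing sequence with $e\le c$; we show $e_i\le d_i$ for all $i$. For $i=1$ we have $e_1\le c_1=d_1$. Suppose $e_{i-1}\le d_{i-1}$. Since $e$ is strict and integer valued, $e_i\le e_{i-1}-1\le d_{i-1}-1$. Also $e_i\le c_i$ by hypothesis. Therefore $e_i\le\min\{d_{i-1}-1,c_i\}=d_i$.

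Combining the two steps, $d$ is an element of the set of strictly increasing columns bounded by $c$ that dominates every element of that set. It is thus the maximum, which in particular makes it the unique supremum.

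There is essentially no obstacle here. The only point requiring care is the convention: reading bottom to top turns "strictly increasing down the column" into the strict decrease $d_1>\cdots>d_k$. The integrality step $e_i\le e_{i-1}-1$ is what makes the recursive minimum sharp.

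If one wants the maximum taken among columns with entries in $\mathbb Z_{>0}$, one must add the caveat that the set may be empty when $d_k\le0$. In that case the statement is read as saying that no such column exists, since every competitor lies below $d$ and so would have $e_k\le d_k\le0$.
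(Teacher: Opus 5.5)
Your proof is correct and follows the paper's argument: you check that $d=\operatorname{str}(c)$ is strictly decreasing with $d\le c$. You then show by induction that any strictly decreasing $e\le c$ satisfies $e_i\le\min\{c_i,e_{i-1}-1\}\le\min\{c_i,d_{i-1}-1\}=d_i$, exactly as the paper does, including the same remark that no positive column exists when $d_k\le 0$.
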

\begin{proof}
Indeed, let $a$ be a strictly increasing column, written from bottom to top as $a_{1}>\cdots>a_{k}$, and suppose that $a\leq c$.
Then $a_{1} \leq c_{1} = d_{1}$, and proceeding by induction:
$$
a_{i} \leq \min \{ c_{i}, a_{i-1}-1 \}  \leq \min \{ c_{i}, d_{i-1}-1\}.
$$
Therefore $a \leq d$. Note that if $d_k\leq0$, then no strictly increasing column with positive entries can be bounded by $c$.
\end{proof}

\begin{proposition}
\label{PropStrInclusion}
Let $v$ and $w$ be weakly decreasing words, with repeated occurrences counted with multiplicity. If $v\subseteq w$, then
\[
\operatorname{str}(v)\subseteq\operatorname{str}(w).
\]
In particular, if \(\operatorname{str}(w)\) has only positive entries,
then so does \(\operatorname{str}(v)\).
\end{proposition}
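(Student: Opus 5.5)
The plan is to reduce to the deletion of a single letter and then induct. Since $v\subseteq w$ as subwords (with multiplicity), $v$ is obtained from $w$ by deleting letters one at a time, and every intermediate word is still weakly decreasing. Being a subword is transitive. So it suffices to prove the following claim: if $w=c_1\cdots c_k$ is weakly decreasing and $w'$ is obtained by deleting the letter $c_j$, then $\operatorname{str}(w')$ is obtained from $\operatorname{str}(w)$ by deleting a single entry. The positivity statement then follows at once, because the entries of $\operatorname{str}(v)$ are among the entries of $\operatorname{str}(w)$.

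To prove the claim, write $d=\operatorname{str}(w)$. Let $e$ denote the strictification of $w'$, indexed by the original positions $i\neq j$. For $i<j$ the recursion has not yet seen the deletion, so $e_i=d_i$. For $i>j$ I will show by induction on $i$ that exactly one of two alternatives holds:
\[
e_i=d_{i-1}\ (\text{``shifted''})\qquad\text{or}\qquad e_i=d_i\ (\text{``synced''}).
\]
I will also show that once the synced alternative occurs, it persists.

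The inductive step has three parts.
\begin{itemize}
\item If the predecessor of $i$ in $w'$ is synced, i.e.\ $e_{\mathrm{prev}}=d_{i-1}$, then $e_i=\min(d_{i-1}-1,c_i)=d_i$. So $e_i$ is synced.
\item If the predecessor is shifted, i.e.\ $e_{\mathrm{prev}}=d_{i-2}$, then $e_i=\min(d_{i-2}-1,c_i)$. The base case $i=j+1$ reads $e_{\mathrm{prev}}=d_{j-1}$, with the convention that $d_0=+\infty$ when $j=1$. Compare this with $d_{i-1}=\min(d_{i-2}-1,c_{i-1})$, and use $c_i\le c_{i-1}$.
\begin{itemize}
\item If $d_{i-2}-1\le c_i$, then $e_i=d_{i-2}-1=d_{i-1}$, which is shifted.
\item Otherwise $e_i=c_i<d_{i-1}$. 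Hence $d_i=\min(d_{i-1}-1,c_i)=c_i=e_i$, which is synced.
\end{itemize}
\end{itemize}

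Granting the dichotomy, let $p$ be the last index at which $e$ is shifted. If no index is shifted, set $p=j$; if every index after $j$ is shifted, then $p=k$. Then $e$ reads $d_1,\ldots,d_{p-1},d_{p+1},\ldots,d_k$. In other words, $\operatorname{str}(w')$ equals $\operatorname{str}(w)$ with the entry $d_p$ removed, which proves the claim.

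The main obstacle is the bookkeeping in the inductive step. One must show that the two states exhaust all possibilities and that the synced state is absorbing. This depends on the monotonicity $c_i\le c_{i-1}$ of the weak column and on the specific form $d_i=\min(d_{i-1}-1,c_i)$ of the recursion. The base case $i=j+1$ must be checked separately, using $e_{j-1}=d_{j-1}$ or the convention $d_0=+\infty$.
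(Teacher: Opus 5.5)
Your reduction to deleting a single letter is sound, and so is the overall shape of the argument: an initial run of ``shifted'' indices, then an absorbing ``synced'' state, so that $\operatorname{str}(w')$ is $\operatorname{str}(w)$ with one entry removed. However, the second sub-case of your inductive step is wrong as written. From $d_{i-2}-1>c_i$ and $c_i\le c_{i-1}$ you only get
\[
d_{i-1}=\min\{d_{i-2}-1,\,c_{i-1}\}\ \geq\ c_i ,
\]
with equality exactly when $c_{i-1}=c_i$. The strict inequality $c_i<d_{i-1}$ therefore does not follow, and the failure actually occurs. Take $w=9\,3\,3\,2$ and delete the first $3$ ($j=2$). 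Then $d=\operatorname{str}(w)=(9,3,2,1)$ and $\operatorname{str}(w')=(9,3,2)$. So $e_3=3=d_2$ is shifted, not synced, and $d_3=2\neq e_3$. Your case analysis would instead declare every index after $j$ synced and conclude that $d_2$ is the removed entry, which is false. The same happens for $w=3\,3$ with $j=1$ under your convention $d_0=+\infty$.

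The repair is one line. In that sub-case, if $c_{i-1}=c_i$ then $e_i=c_i=d_{i-1}$, which is the shifted alternative. If $c_{i-1}>c_i$ then $d_{i-1}>c_i$, so $d_i=\min\{d_{i-1}-1,c_i\}=c_i=e_i$ is synced. The dichotomy and the absorbing property then hold as you claimed. The equality case can only arise at the base step $i=j+1$: when $i-1\neq j$ is shifted, $d_{i-2}=e_{i-1}\le c_{i-1}$, so $c_i=c_{i-1}$ forces your first sub-case.

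With this fix your proof is complete, and it takes a different route from the paper. The paper reinterprets strictification as parking: each letter $x$ occupies the largest unoccupied integer $\le x$. A two-letter commutation check shows the final occupied set is independent of processing order. The paper then processes the letters of $v$ before the remaining letters of $w$, and since no position is ever vacated the inclusion is immediate. Your argument stays inside the recursion, needs no order-independence, and identifies exactly which entry of $\operatorname{str}(w)$ disappears. The paper's argument is shorter, and its order-independence is reused in Proposition~\ref{Parkingalpha} and in the half-bubble-sort comparison (Proposition~\ref{PropHalfBubbleSortComparison}); your argument does not supply that tool.
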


\begin{proof}
For a finite set \(S\subset\mathbb{Z}\) and an integer \(x\), define
\[
p_S(x)=\max\{z\in\mathbb{Z}:z\leq x,\ z\notin S\}.
\]
We interpret the operation $S\longmapsto S\cup\{p_S(x)\}$ as letting $x$ occupy the largest unoccupied position not exceeding $x$.

We first observe that the final set of occupied positions is independent of the order in which the letters are processed.
It suffices to show that the operations corresponding to two letters \(x\leq y\) commute.

Set $a=p_S(x)$ and $b=p_S(y)$. If $b>x$, then occupying $b$ does not affect the position $a$, while occupying $a$ does not affect the position $b$.
Thus both orders produce $S\cup\{a,b\}$. Suppose instead that \(b\leq x\).
Since \(x\leq y\), we must have
\(a=b\). Let
\[
c=\max\{z<a:z\notin S\}.
\]
After one of the two letters occupies \(a\), the other occupies \(c\).
Thus both orders produce $S\cup\{a,c\}$.
The two operations therefore commute, so the final occupied set is independent of the order of the letters.

Processing the letters of a weakly decreasing word $u=u_1\cdots u_m$ from left to right, strictification is given by
\[
\operatorname{str}(u)_1=u_1,
\qquad
\operatorname{str}(u)_i
=
\min\{u_i,\operatorname{str}(u)_{i-1}-1\},
\qquad 2\leq i\leq m.
\]

Now suppose that \(v\subseteq w\). Since the occupied-position construction is independent of the order, we may process first the occurrences belonging to \(v\), and then the remaining occurrences of \(w\).
After processing the letters of \(v\), the occupied positions are exactly the entries of \(\operatorname{str}(v)\).
Every remaining letter occupies a new position and does not vacate any previously occupied position.
Consequently, $\operatorname{str}(v)\subseteq\operatorname{str}(w)$.

Since both strictified words are strictly decreasing, containment as sets is equivalent to containment as subwords.
\end{proof}

\begin{remark}
  Since for $s \leq t$, $(B_{+}^{\lambda}(\alpha))_{s} \supseteq (B_{+}^{\lambda}(\alpha))_{t}$, by the previous proposition we have that $\operatorname{str}(B_{+}^{\lambda}(\alpha))_{s} \supseteq \operatorname{str}(B_{+}^{\lambda}(\alpha))_{t}$.
  Hence when $\operatorname{str}(B_{+}^{\lambda}(\alpha))_{1}$ is positive, $\operatorname{str}(B_{+}^{\lambda}(\alpha))_{t}$ is positive for all $t$.  
\end{remark}

\begin{definition}
\label{DefKeylambda}
Let $\lambda=(\lambda_1,\ldots,\lambda_n)$ be a partition, set $m=\lambda_1$, and let $\alpha\in\mathbb N^n$.

Suppose that the columnwise strictification $\operatorname{str}\bigl(B_+^\lambda(\alpha)\bigr)$ has only positive entries, where this condition is understood vacuously when $\alpha=0$. By the previous remark, this strictification is a key tableau.
We define $\alpha^\lambda\in\mathbb N^m$ to be the unique weak composition such that
\[
K(\alpha^\lambda)
=
\operatorname{str}\bigl(B_+^\lambda(\alpha)\bigr).
\]

If the strictification contains a nonpositive entry, we say that $\alpha^\lambda$ does not exist.
Finally, set
\[
\operatorname{Comp}(\lambda)
=
\left\{
\alpha\in\mathbb N^n:
\alpha^\lambda\text{ exists}
\right\}.
\]
\end{definition}

\begin{example}
Let
\[
\lambda=(4,4,3,1)
\qquad\text{and}\qquad
\alpha=(1,4,5,2).
\]
Then
\[
\alpha^+=(5,4,2,1).
\]
Hence
\[
K(\alpha)=
\tableau[scY]{
1 & 2 &2 &2 & 3 \\
2 & 3 & 3 & 3\\
3 & 4 \\
4
}.
\]
Applying the definition of $B_+^\lambda(\alpha)$ column by column, and reading the entries of each column from bottom to top, we obtain
\[
B_+^\lambda(\alpha)
=
\tableau[scY]{
1 & 1 &3 &3 & 3 \\
3 & 3 & 4 & 4\\
4 & 4 \\
4
}.
\]
Notice that $B_+^\lambda(\alpha)$ is not strictly increasing in its first column and hence is not a semistandard tableau.
Its role is to encode the $\lambda$-dependent upper bounds appearing in the admissibility condition.
Finally, applying strictification gives
\[
K(\alpha^\lambda)
=
\tableau[scY]{
1 & 1 &3 &3 & 3 \\
2 & 3 & 4 & 4\\
3 & 4 \\
4
}.
\]
\end{example}

\begin{example}
    If $\lambda = (1,1)$ and $\alpha =(1,1)$ then
$$K(\alpha) =\tableau[scY]{
1 \\
2 
}, \qquad
B_{+}^{\lambda}(\alpha) =\tableau[scY]{
1 \\
1 
}
\quad \text{ and } \quad  \operatorname{str}( B_{+}^{\lambda}(\alpha) )_{1} = \tableau[scY]{
0 \\
1 
}
.
$$
As $\operatorname{str}( B_{+}^{\lambda}(\alpha) )_{1}$ has a nonpositive
entry, $\alpha^{\lambda}$ does not exist.
\end{example}

Although the definition of $\alpha^\lambda$ is expressed in terms of key tableaux and strictification, $\alpha^\lambda$ can also be computed directly from $\alpha$ and the boundary values of $\lambda$.
We give this construction, together with an explicit criterion for the existence of $\alpha^\lambda$, in Section~\ref{SecDirectAlphaLambda}.

We now use strictification to replace the weak $\lambda$-dependent bound by an ordinary key bound. Throughout the remainder of this section, let $\lambda=(\lambda_1,\ldots,\lambda_n)$ and set $m=\lambda_1$.

\begin{definition}
\label{DefSBounds}
For \(\alpha\in\mathbb N^n\), define
\[
\mathcal{S}(\alpha)
=
\left\{
Q\in\operatorname{Tab}_{[n]}(\alpha^+):
K_+(Q)=K(\alpha)
\right\},
\]
and
\[
\mathcal{S}^\lambda(\alpha)
=
\left\{
P\in\operatorname{Tab}_{[m]}(\alpha^+):
K_+(P)\leq B_+^\lambda(\alpha)
\right\}.
\]
\end{definition}

\begin{proposition}
\label{PropStrictificationBound}
Let $\lambda=(\lambda_1,\ldots,\lambda_n)$, set $m=\lambda_1$,
and let $\alpha\in\mathbb N^n$.

If $\alpha\in\operatorname{Comp}(\lambda)$, then
\[
\mathcal{S}^\lambda(\alpha)
=
\left\{
P\in\operatorname{Tab}_{[m]}(\alpha^+):
K_+(P)\leq K(\alpha^\lambda)
\right\}.
\]
If $\alpha\notin\operatorname{Comp}(\lambda)$, then $\mathcal S^\lambda(\alpha)=\varnothing$.
\end{proposition}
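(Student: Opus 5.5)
The plan is to work column by column, using the fact that the condition $K_+(P)\leq B$ for a filling $B$ of shape $\alpha^+$ is entrywise. The condition $K_+(P)\leq B_+^\lambda(\alpha)$ therefore splits into one condition per column $r$: the $r$-th column of $K_+(P)$ must be bounded entrywise by the weak column $(B_+^\lambda(\alpha))_r$. Both columns have length $c_r$, since $P$ has shape $\alpha^+$.

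\textbf{Step 1 (one column).} Each column of $K_+(P)$ is a column of a semistandard tableau, so it is strictly increasing with entries in $[m]$. By Proposition~\ref{PropStrictification}, such a column is bounded by $(B_+^\lambda(\alpha))_r$ if and only if it is bounded by $\operatorname{str}\bigl((B_+^\lambda(\alpha))_r\bigr)$.

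\textbf{Step 2 (assemble the columns).} Taking the conjunction over all $r$ gives
\[
K_+(P)\leq B_+^\lambda(\alpha)
\quad\Longleftrightarrow\quad
K_+(P)\leq \operatorname{str}\bigl(B_+^\lambda(\alpha)\bigr),
\]
with strictification taken columnwise. Both sides are compared entrywise on the common shape $\alpha^+$, so no row condition on the bound is needed at this stage.

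\textbf{Step 3 ($\alpha\in\operatorname{Comp}(\lambda)$).} By Definition~\ref{DefKeylambda}, the columnwise strictification is the key tableau $K(\alpha^\lambda)$. Substituting this into Step 2 gives the stated description of $\mathcal S^\lambda(\alpha)$.

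\textbf{Step 4 ($\alpha\notin\operatorname{Comp}(\lambda)$).} Here some column of $\operatorname{str}(B_+^\lambda(\alpha))$ contains a nonpositive entry. The relevant step is the last sentence of the proof of Proposition~\ref{PropStrictification}: no strictly increasing column with positive entries is bounded by that weak column. Columns of $K_+(P)$ are strictly increasing with positive entries, so no $P\in\operatorname{Tab}_{[m]}(\alpha^+)$ can satisfy $K_+(P)\leq B_+^\lambda(\alpha)$. Hence $\mathcal S^\lambda(\alpha)=\varnothing$.

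\textbf{Degenerate case.} When $\alpha=0$, the shape is empty and both sets consist of the empty tableau, so the statement holds trivially.

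\textbf{Expected obstacle.} The only point needing care is the bookkeeping of orientation. Strictification is defined on columns read bottom to top, while the comparison $K_+(P)\leq B$ is entrywise in the usual top-to-bottom tableau positions. I would fix this by noting that both $K_+(P)$ and $B_+^\lambda(\alpha)$ are weakly increasing down columns. The bottom-to-top words are then weakly decreasing and aligned from the bottom cell, and since both columns have the same length $c_r$, entrywise comparison of the columns is the same as comparison of these words in the sense of Definition~\ref{DefOrderWords}. This matches the setting of Proposition~\ref{PropStrictification}, and the rest of the argument is formal.
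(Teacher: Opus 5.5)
Your proof is correct and follows essentially the same argument as the paper. You reduce columnwise via Proposition~\ref{PropStrictification}, identify the strictified filling with $K(\alpha^\lambda)$, and obtain emptiness from a nonpositive strictified entry. The only difference is cosmetic: the paper invokes the nesting of strictified columns (Proposition~\ref{PropStrInclusion}) to reduce the nonexistence case to the first column, whereas you take the key-tableau property directly from Definition~\ref{DefKeylambda} and argue with whichever column contains the nonpositive entry, which works equally well.
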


\begin{proof}
Let $b$ be a column of $B_+^\lambda(\alpha)$. By Proposition~\ref{PropStrictification}, $\operatorname{str}(b)$ is the largest strictly increasing column bounded by $b$.
Consequently, for every key tableau $K$ of shape $\alpha^+$,
\[
K\leq B_+^\lambda(\alpha)
\quad\Longleftrightarrow\quad
K\leq \operatorname{str}\bigl(B_+^\lambda(\alpha)\bigr),
\]
where strictification is applied columnwise.

The columns of $B_+^\lambda(\alpha)$ are nested as weak columns.
By Proposition~\ref{PropStrInclusion}, their strictifications are also nested.
Hence, if the strictification of the first column has only positive entries, the strictified filling is the key tableau $K(\alpha^\lambda)$.
Therefore
\[
K_+(P)\leq B_+^\lambda(\alpha)
\quad\Longleftrightarrow\quad
K_+(P)\leq K(\alpha^\lambda).
\]

If the strictification of the first column contains a nonpositive entry, no strictly increasing column with positive entries can be bounded by that column.
Thus $\mathcal{S}^\lambda(\alpha)=\varnothing$.
\end{proof}

By Proposition~\ref{PropStrictificationBound} and the tableau formula~\eqref{EqKeyTableau}, we obtain the following consequence.

\begin{corollary}
\label{CorGeneratingSlambda}
Let $\lambda=(\lambda_1,\ldots,\lambda_n)$, set $m=\lambda_1$, and let $\alpha \in \mathbb{N}^{n}$.
Then
\[
\sum_{P\in \mathcal{S}^\lambda(\alpha)}y^{\operatorname{wt}(P)}
=
\begin{cases}
K_{\alpha^\lambda}(y_1,\ldots,y_m),
   & \text{if $\alpha^\lambda$ is defined},\\
0, & \text{otherwise}.
\end{cases}
\]
\end{corollary}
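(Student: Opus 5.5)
The plan is to combine Proposition~\ref{PropStrictificationBound} with the tableau formula \eqref{EqKeyTableau} for key polynomials. We split into two cases according to whether $\alpha\in\operatorname{Comp}(\lambda)$, that is, whether $\alpha^\lambda$ is defined.

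\emph{Case $\alpha\notin\operatorname{Comp}(\lambda)$.} Proposition~\ref{PropStrictificationBound} gives $\mathcal S^\lambda(\alpha)=\varnothing$. The sum is therefore empty and equals $0$, as required.

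\emph{Case $\alpha\in\operatorname{Comp}(\lambda)$.} Here Proposition~\ref{PropStrictificationBound} rewrites $\mathcal S^\lambda(\alpha)$ as the set of $P\in\operatorname{Tab}_{[m]}(\alpha^+)$ with $K_+(P)\leq K(\alpha^\lambda)$. To match this with \eqref{EqKeyTableau}, applied in the variables $y_1,\ldots,y_m$ to the composition $\alpha^\lambda\in\mathbb N^m$, we need the shape identity $(\alpha^\lambda)^+=\alpha^+$.

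\emph{The shape identity.} By Definition~\ref{DefKeylambda}, $K(\alpha^\lambda)=\operatorname{str}(B_+^\lambda(\alpha))$. Strictification acts columnwise and preserves column lengths. Moreover, $B_+^\lambda(\alpha)$ is a filling of $\alpha^+$ by construction. Hence $K(\alpha^\lambda)$ has shape $\alpha^+$. Since $K(\beta)$ has shape $\beta^+$ for any weak composition $\beta$, this gives $(\alpha^\lambda)^+=\alpha^+$.

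A small subtlety: the key tableau $K(\alpha^\lambda)$ must have entries in $[m]$, so that $\alpha^\lambda\in\mathbb N^m$ makes sense. The entries of $B_+^\lambda(\alpha)$ are values $\lambda[i]\leq\lambda_1=m$, and strictification only decreases entries. Positivity of the strictified entries is exactly the hypothesis $\alpha\in\operatorname{Comp}(\lambda)$.

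\emph{Conclusion.} With the shape identity and the range of entries in hand, the index set in \eqref{EqKeyTableau} for $K_{\alpha^\lambda}(y_1,\ldots,y_m)$ coincides with $\mathcal S^\lambda(\alpha)$. Summing $y^{\operatorname{wt}(P)}$ over it therefore gives $K_{\alpha^\lambda}(y)$. The edge case $\alpha=0$ is consistent: both sides equal $1$, corresponding to the empty tableau.

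The only real obstacle is this bookkeeping on shape and alphabet. Everything substantive is already contained in Proposition~\ref{PropStrictificationBound}.
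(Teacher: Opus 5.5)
Your proposal is correct and follows the paper's route: the paper derives the corollary in one line by combining Proposition~\ref{PropStrictificationBound} with the tableau formula~\eqref{EqKeyTableau}. Your checks that $(\alpha^\lambda)^+=\alpha^+$ and that the entries of $K(\alpha^\lambda)$ lie in $[m]$ make explicit the identifications the paper leaves implicit.
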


We now fix some notation that will be used throughout the section.
For a tableau $T$, recall that $\mathcal C_r(T)$ denotes the decreasing word corresponding to column $r$ of the right key $K_+(T)$.
Thus, if
\[
K_+(T)_r=
\begin{array}{c}
a_1\\
\vdots\\
a_{c_r}
\end{array},
\qquad
a_1\leq\cdots\leq a_{c_r},
\]
then
\[
\mathcal C_r(T)=a_{c_r}\cdots a_1.
\]

Following~\cite[Section~5.1]{mSchurt0}, we define $\mathcal C_r^\lambda(Q)$ as the $\lambda$-analogue of $\mathcal C_r^*(Q)$.
For a tableau $Q\in\operatorname{Tab}_{[n]}$, let $\mathcal{C}^{\lambda}_{r}(Q)$ be the word obtained by applying the $\lambda$-operation to the letters of $\mathcal{C}_{r}(Q)$ and then reordering the resulting letters so that the word is weakly decreasing.

For a tableau $Q$, we write
\[
\mathcal{C}_r^\lambda(Q)
=
\bigl(\mathcal{C}_r(Q)\bigr)^\lambda.
\]
By construction, $\mathcal C_r^\lambda(Q)$ is obtained by applying
the $\lambda$-operation to column $r$ of $K_+(Q)$.
In the notation of \cite{mSchurt0}, for $\lambda=(N+m,\ldots,N+1,N^{N-m})$, the word $\mathcal C_r^\lambda(Q)$ specializes to $\mathcal C_r^*(Q)$ defined in~\cite{mSchurt0} under the identification $N+j \leftrightarrow \hat{j}$.

The insertion analysis in Sections~\ref{Sec:RSK}--\ref{sec:RSKconsequences} extends the strategy developed in joint work with Lapointe~\cite[Sections~5.2--5.4]{mSchurt0}.
In that setting, the boundary is the particular Ferrers shape associated with the $m$-symmetric Cauchy kernel.
For a general partition \(\lambda\), the same local insertion mechanism survives, but repeated parts of \(\lambda\) produce weak column bounds.
This necessitates the strictification procedure introduced above and leads to the additional existence problem treated in Section~\ref{SecDirectAlphaLambda}.

\subsection{The RSK algorithm and $\lambda$-admissible pairs}

\label{Sec:RSK}

We now come to the technical core of the article: the compatibility between admissibility and a single step of the RSK insertion.
Recall from Section~\ref{ssectab} the row insertion algorithm $T \leftarrow j$ and the associated notion of insertion path.

Throughout this subsection, we retain the notation of Definition~\ref{DefSBounds}; in particular, $\lambda=(\lambda_1,\ldots,\lambda_n)$ and $m=\lambda_1$.

We say that $(P',Q')=(P,Q) \leftarrow \binom{i}{j}$ is RSK-compatible if the following conditions are satisfied

\begin{itemize}
\item $(P,Q)$ and $(P',Q')$ are pairs of tableaux of the same shape in $\operatorname{Tab}_{[m]} \times \operatorname{Tab}_{[n]}$.
\item $P'=P \leftarrow j$, the insertion of the letter $j$ in $P$.
\item  $Q$ is a subtableau of $Q'$, and the only cell in  $Q'/Q$ is filled with the letter $i$.
\item No letter in $Q$ is larger than $i$, and no letter $i$ in $Q$ occurs in a column to the right of the column in which $Q'/Q$ lies.
\end{itemize}

The elements of
\begin{equation}\label{eqGlambda}
\mathcal{G}_\lambda
=
\bigcup_{\alpha \in \operatorname{Comp}(\lambda)}
\left(
\mathcal{S}^{\lambda}(\alpha)
\times
\mathcal{S}(\alpha)
\right)
\end{equation}
will play a fundamental role in the proof of the Cauchy identity.
We say that a pair of tableaux $(P,Q)$ is \emph{$\lambda$-admissible} if $(P,Q)\in \mathcal{G}_\lambda$.
Equivalently, if $Q\in \mathcal{S}(\alpha)$, then
\[
P\in \mathcal{S}^\lambda(\alpha)
\quad\Longleftrightarrow\quad
(P,Q)\in \mathcal{G}_\lambda.
\]
Indeed, $K_+(Q)=K(\alpha)$, so the defining bound depends only on $\alpha$ and not on the particular choice of $Q\in \mathcal{S}(\alpha)$.

We say that $\binom{i}{j}$ is a $\lambda$-admissible biletter if $j \leq \lambda [i]$.
A $\lambda$-admissible biword is a biword of the form
\[
\left(
\begin{array}{cccc}
i_1 & i_2 & \cdots & i_r \\
j_1 & j_2 & \cdots & j_r
\end{array}
\right)
\]
in which every biletter $\binom{i_k}{j_k}$ is $\lambda$-admissible.

We denote by $\mathcal{B}_{\lambda}$ the set of all lexicographically ordered $\lambda$-admissible biwords.

\begin{theorem}
\label{TeoremaKeysLambda}
The RSK correspondence, when restricted to $\mathcal{B}_\lambda$, provides a bijection
\[
\operatorname{RSK}\colon \mathcal{B}_\lambda\longrightarrow \mathcal{G}_\lambda.
\]
\end{theorem}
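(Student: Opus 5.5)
The plan is to lean on the fact that ordinary RSK is already a bijection between all lexicographic biwords and all pairs of tableaux of the same shape. So it suffices to prove two inclusions: $\operatorname{RSK}(\mathcal B_\lambda)\subseteq\mathcal G_\lambda$, and every $(P,Q)\in\mathcal G_\lambda$ comes from a $\lambda$-admissible biword. Both will be proved by induction on the number of biletters, using RSK-compatible steps $(P',Q')=(P,Q)\leftarrow\binom{i}{j}$.

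The first step is to reformulate $\lambda$-admissibility columnwise. By Proposition~\ref{IgualdadKeys} and the definition of $B_+^\lambda(\alpha)$, if $K_+(Q)=K(\alpha)$ then $P\in\mathcal S^\lambda(\alpha)$ if and only if
\[
\mathcal C_r(P)\leq \mathcal C_r^\lambda(Q)\qquad\text{for every column }r ,
\]
as weakly decreasing words. By Proposition~\ref{PropStrictificationBound}, pairs with $\alpha\notin\operatorname{Comp}(\lambda)$ contribute nothing. Hence $(P,Q)\in\mathcal G_\lambda$ if and only if $P$ and $Q$ have the same shape and these column inequalities hold. From now on I work only with the inequalities $\mathcal C_r(P)\le\mathcal C_r^\lambda(Q)$, and never with strictified keys.

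\emph{Forward direction.} Assume $(P,Q)$ satisfies the column inequalities, $j\le\lambda[i]$, and the step is RSK-compatible, with the new cell of $Q'$ in column $c$. I would compare $\mathcal C_r(P')=\sup W(P'_r)$ with $\mathcal C_r(P)=\sup W(P_r)$ for each $r$. Since $w(P\leftarrow j)\equiv w(P)j$, Lemma~\ref{SupKnuthEquivalent} lets me relate decreasing subwords of $P'_r$ to decreasing subwords of $P_r$, possibly extended by a letter of the insertion path. On the $Q$ side, $i$ is the largest letter and no $i$ lies to the right of column $c$. So $\mathcal C_r(Q')$ should be obtained from $\mathcal C_r(Q)$ by prepending or substituting $i$ in a controlled way for $r\le c$, and should be unchanged for $r>c$. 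Applying $\lambda[\cdot]$ then places $\lambda[i]\ge j$ in the last position of $\mathcal C^\lambda_r(Q')$. The goal is to show that every new decreasing subword created in $P'_r$ is dominated entrywise by the modified bound, using $j\le\lambda[i]$ for the newly created last letter and the induction hypothesis for the rest. This mirrors \cite[Sections~5.2--5.4]{mSchurt0}. The only new feature is that $\lambda[\cdot]$ may have repeated values, so the bound is a weak word; the inequalities are insensitive to this since $\le$ is defined on $\overline W(\mathcal A)$.

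\emph{Reverse direction.} Given $(P',Q')\in\mathcal G_\lambda$, remove the rightmost occurrence of the largest letter $i$ of $Q'$, say in column $c$ and row $k$, and reverse-bump $P'$ from that cell. This produces $j$ and $(P,Q)$. The reverse path consists of $y_k>\cdots>y_2>j$ in rows $k,\ldots,1$, moving weakly right as it goes up. It therefore lies in columns $\ge c$, and $y_k\cdots y_2 j$ is a decreasing subword of $w(P'_c)$ of length $k$, which equals the length of column $c$. Hence
\[
j\le \mathcal C_c(P')_k\le \mathcal C^\lambda_c(Q')_k=\lambda\bigl[\max \mathcal C_c(Q')\bigr]=\lambda[i].
\]
The last equality holds because the largest entry of column $c$ of $K_+(Q')$ is $i$, the cell of $i$ lying in $Q'_c$. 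So the removed biletter is $\lambda$-admissible. It remains to show that $(P,Q)$ again satisfies the column inequalities, which is the same supremum bookkeeping run backwards.

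I expect the main obstacle to be the column-by-column propagation of the inequalities through a single insertion or deletion in both directions. One has to track exactly how $\sup W(P'_r)$ and the right key of $Q'$ change for columns $r<c$, where the insertion path and the new letter $i$ interact nontrivially. I would first try to establish a precise lemma of the form $\mathcal C_r(P')\le \mathcal C_r(P)$ with one letter replaced by at most an insertion-path entry, paired with the matching statement that $\mathcal C_r(Q')$ is $\mathcal C_r(Q)$ with the corresponding letter replaced by $i$, as in \cite{mSchurt0}.
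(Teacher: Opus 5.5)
Your skeleton matches the paper's. Ordinary RSK is a bijection, so it suffices to prove the stepwise equivalence \eqref{equivalencelambda} and iterate it. Your reverse-bumping argument that the removed biletter satisfies $j\leq\lambda[i]$ is exactly Proposition~\ref{LimiteBuenoYMalolambda}. The gap is that the two steps you defer are the substance of the proof, and the strategy you sketch for the first of them does not work.

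\emph{Forward step, column $r<\ell$ (your $c$).} You plan to check only the decreasing subwords newly created in $P'_r$, using $j\leq\lambda[i]$ for the new last letter and the induction hypothesis for everything else. That plan is fine for $r\geq\ell$; it is what the paper does via Proposition~\ref{propvuj}, not Lemma~\ref{SupKnuthEquivalent}, since $w(P'_r)\equiv w(P_r)j$ only for $r\leq\ell$. For $r<\ell$ it fails. Write $\mathcal C_r^\lambda(Q)=VbS$, where $S$ is the longest common suffix with $\mathcal C_\ell^\lambda(Q)=US$. Corollary~\ref{DesigualdadQlambda} gives the new bound $\mathcal C_r^\lambda(Q')=VSa$ with $a=\lambda[i]$. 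This can be strictly smaller than $VbS$ in several of the last $|S|+1$ positions, not only the last. For instance, with $\lambda=(4,3,2,1)$, $\mathcal C_r(Q)=31$, $\mathcal C_\ell(Q)=3$ and $i=4$, the bound drops from $42$ to $21$. So even subwords of $P'_r$ untouched by the insertion must meet a tighter bound, and the induction hypothesis only gives $\mathcal C_r(P')\leq VbS$. The missing idea is the key-tableau nesting in $K_+(P')$:
\begin{itemize}
\item $\mathcal C_\ell(P')$ is a subword of $\mathcal C_r(P')$, so the last $|S|+1$ letters of $\mathcal C_r(P')$ are at most the last $|S|+1$ letters of $\mathcal C_\ell(P')$.
\item The already settled case $r=\ell$ gives $\mathcal C_\ell(P')\leq USa$, so those letters are at most $Sa$.
\item The first $|V|$ letters are bounded by $V$ via $\mathcal C_r(P')\leq VbS$.
\end{itemize}
No coupling between the letter that changes in $\mathcal C_r(P')$ and the one that changes in $\mathcal C_r(Q')$ is needed. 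It is also unclear that the coupling lemma you propose is even true.

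\emph{Reverse step.} Showing that $(P,Q)$ is again admissible is not ``the same bookkeeping run backwards.''
\begin{itemize}
\item For $r<\ell$ it is easy: Lemma~\ref{LemaDesfasaje} gives $\sup W(P_r)\leq\sup W(P'_r)$, and $\mathcal C_r^\lambda(Q')\leq\mathcal C_r^\lambda(Q)$.
\item For $r=\ell$ it is also easy: the same lemma applies, and the extra final letter $\lambda[i]$ lies beyond the length $k-1$ of column $\ell$ of $P$.
\item For $r>\ell$ the bound is unchanged, but $P_r$ and $P'_r$ are not comparable. The insertion path can carry letters out of columns $\geq r$, so a violating word $w=w_s\cdots w_1\in W(P_r)$ may disappear from $W(P'_r)$.
\end{itemize}
Proposition~\ref{PropReciprocalambda} handles the case $r>\ell$ by transferring the violation to column $\ell$. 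Let $w_t=c_{p-1}$ be the first letter of $w$, counting from $w_1$, lying on the insertion path. Then $u=c_{k-1}\cdots c_p w_t\cdots w_1\in W(P'_\ell)$. Let $q$ be the length of column $r$; a row count gives $(u)_{k-q+s}\geq(w)_s$. Since $\mathcal C_r^\lambda(Q')$ is a subword of $\mathcal C_\ell^\lambda(Q')$, of length $q$ versus $k$, one gets
\[
(\mathcal C_\ell^\lambda(Q'))_{k-q+s}\leq(\mathcal C_r^\lambda(Q'))_s<(w)_s\leq(\sup W(P'_\ell))_{k-q+s},
\]
which contradicts the admissibility of $(P',Q')$. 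Without this cross-column argument, or a substitute for it, your reverse induction does not close.
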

We prove the theorem later in this section.
Its proof is based on the following stepwise equivalence:
\begin{equation}
\label{equivalencelambda}
\left[
(P,Q)\text{ and }\binom{i}{j}
\text{ are both $\lambda$-admissible}
\right]
\iff
(P',Q')\text{ is $\lambda$-admissible}.
\end{equation}

Before proving this equivalence, we establish the necessary local results.

The following characterizations of $\lambda$-admissibility will be used repeatedly.
The third one, which expresses the condition in terms of suprema of decreasing subwords, is the form in which the condition can be propagated through a single RSK insertion.

\begin{definition}
For a tableau $Q$, let $B_+^\lambda(Q)$ denote the filling obtained by applying the $\lambda$-operation to every column of $K_+(Q)$.
Thus, the word obtained by reading column $r$ of $B_+^\lambda(Q)$ from bottom to top is $\mathcal{C}_r^\lambda(Q)$. If $Q\in \mathcal{S}(\alpha)$, then
\[
B_+^\lambda(Q)=B_+^\lambda(\alpha).
\]
\end{definition}

\begin{proposition}\label{propcriterialambda}
Let \(P\) and \(Q\) be tableaux of the same shape, let \(\ell\) be their number of columns, and let \(c_r\) denote the length of their \(r\)-th column.
The following statements are equivalent to the $\lambda$-admissibility of $(P,Q)$:
\begin{enumerate}
    \item
    \[
    K_+(P)\leq
    B_+^\lambda(Q),
    \]

    \item
    \[
    \bigl(\mathcal C_r(P)\bigr)_{i}
    \leq
    \lambda\left[
        \bigl(\mathcal C_r(Q)\bigr)_{c_{r}+1-i} \right] \qquad \text{ for  every } r \text{ and }i=1,\ldots,c_r,
    \]

    \item
    \[
    \sup W(P_r)
    \leq
    \mathcal{C}_r^\lambda(Q) \qquad \text{ for every } 1 \leq r \leq \ell
    \]
\end{enumerate}
\end{proposition}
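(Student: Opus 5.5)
The plan is to show first that $\lambda$-admissibility of $(P,Q)$ is equivalent to (1), and then that (1), (2) and (3) are reformulations of each other. Throughout, $P$ and $Q$ have the same shape, say $\alpha^+$ for the weak composition $\alpha$ defined by $K_+(Q)=K(\alpha)$. Such an $\alpha$ exists because $K_+(Q)$ is a key tableau of the same shape as $Q$. Then $Q\in\mathcal S(\alpha)$ and, by the definition of $B_+^\lambda(Q)$, we have $B_+^\lambda(Q)=B_+^\lambda(\alpha)$.

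For admissibility versus (1), recall the remark after \eqref{eqGlambda}: when $Q\in\mathcal S(\alpha)$, the pair $(P,Q)$ is $\lambda$-admissible exactly when $P\in\mathcal S^\lambda(\alpha)$. Here one must check that $\alpha\in\operatorname{Comp}(\lambda)$ is automatic. By Proposition~\ref{PropStrictificationBound}, $\mathcal S^\lambda(\alpha)=\varnothing$ when $\alpha\notin\operatorname{Comp}(\lambda)$, so $P\in\mathcal S^\lambda(\alpha)$ already forces $\alpha\in\operatorname{Comp}(\lambda)$. Since $P\in\operatorname{Tab}_{[m]}(\alpha^+)$ holds by hypothesis, $P\in\mathcal S^\lambda(\alpha)$ means precisely $K_+(P)\leq B_+^\lambda(\alpha)=B_+^\lambda(Q)$, which is (1). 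One should also note that the set $\mathcal S(\alpha)$ containing $Q$ is unique, so the union defining $\mathcal G_\lambda$ causes no ambiguity.

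For (1)$\Leftrightarrow$(2), compare the two fillings column by column. Column $r$ of $K_+(P)$, read from bottom to top, is $\mathcal C_r(P)$ by Proposition~\ref{IgualdadKeys}. Column $r$ of $B_+^\lambda(Q)$, read from bottom to top, is $\mathcal C^\lambda_r(Q)$. By Remark~\ref{remarklambda}, its $i$-th letter is $\lambda$ applied to the $(c_r+1-i)$-th letter of $\mathcal C_r(Q)$. Let me verify the indexing. $\mathcal C_r(Q)=a_{c_r}\cdots a_1$ with $a_1<\cdots<a_{c_r}$, so $(\mathcal C_r(Q))_{c_r+1-i}=a_i$. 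Since $\lambda[\cdot]$ is weakly decreasing, $\lambda[a_1]\geq\cdots\geq\lambda[a_{c_r}]$, so the weakly decreasing reordering has $i$-th letter $\lambda[a_i]$. Bottom-to-top reading pairs the $i$-th letters of the two columns, so the entrywise inequality for column $r$ is exactly the family of inequalities in (2).

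For (2)$\Leftrightarrow$(3), note that $\mathcal C_r(P)=\sup W(P_r)$ by definition and Proposition~\ref{IgualdadKeys}. This word has length $c_r$, which equals the length of $\mathcal C^\lambda_r(Q)$. So the order $\leq$ on weakly decreasing words from Definition~\ref{DefOrderWords} reduces to letterwise comparison, which is (2).

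The argument is bookkeeping; the only delicate point is the index reversal in (2), handled above using the fact that $\lambda[\cdot]$ is weakly decreasing.
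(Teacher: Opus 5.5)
Your proof is correct and follows the same route as the paper: you unwind the definition of $\mathcal G_\lambda$ to get (1), read (1) column by column to get (2), and use Proposition~\ref{IgualdadKeys} to identify $\mathcal C_r(P)$ with $\sup W(P_r)$ for (3). Your version is more careful than the paper's terse argument in two places: you use Proposition~\ref{PropStrictificationBound} to show that $\alpha\in\operatorname{Comp}(\lambda)$ is forced, and you explicitly verify the index reversal $c_r+1-i$.
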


\begin{proof}
The equivalence of {\it 1} and $\lambda$-admissibility follows directly from the definition.
Indeed, by the definition of $B_+^\lambda(Q)$,
\[
\bigl( B_+^\lambda(Q)_r\bigr)_i
=
\lambda\left[
\bigl(K_+(Q)_r\bigr)_{c_r+1-i}
\right],
\]
so that the tableau inequality in {\it 1} is precisely the $\lambda$-admissibility condition.

Statement {\it 2} is simply the columnwise formulation of {\it 1}.
Finally, by Proposition~\ref{IgualdadKeys},
\[
\sup W(P_r)=\mathcal C_r(P).
\]
Hence the condition in {\it 2} can be rewritten in terms of the corresponding decreasing subwords, giving {\it 3}.
\end{proof}

\subsection{Behavior under a single RSK insertion}

This subsection adapts the insertion arguments of \cite[Sections~5.2--5.3]{mSchurt0} to the $\lambda$-dependent bounds introduced above.
Some results are recalled for the reader's convenience, while others are reformulated for the general boundary $\lambda$, with proofs included where an adaptation is needed.

Proposition~\ref{PropInsPQlambda} proves the forward implication in~\eqref{equivalencelambda}.
For the converse, we show that neither a $\lambda$-inadmissible initial pair nor a $\lambda$-inadmissible inserted biletter can produce an admissible pair; these are Propositions~\ref{PropReciprocalambda} and~\ref{LimiteBuenoYMalolambda}, respectively.

\begin{proposition}
  \label{PropInsercion}
Suppose that  $Q$ is a tableau all of whose entries are at most $i$.
Let $Q'$ be the tableau obtained from $Q$ by adding a letter  $i$ in column $\ell$ (assuming that this is possible), where $\ell$ is such that there is no letter $i$ to the right of column $\ell$ in $Q$.
We then have that
\begin{equation}
\label{InsercionRecording}
 {\mathcal C}_r(Q')
=
\left\{
 \begin{array}{ll}
   {\mathcal C}_r(Q) &\quad {\rm if~} r > \ell\\
   i{\mathcal C}_r(Q) &\quad {\rm if~} r =\ell \\ 
   {\mathcal C}_r'  & \quad  {\rm  if~ } r < \ell
\end{array}
\right.,
\end{equation}
where $ {\mathcal C}_r'$ is obtained from ${\mathcal C}_r(Q)$ by replacing by $i$ the largest entry of  ${\mathcal C}_r(Q)$ not in $ {\mathcal C}_\ell(Q)$ and then reordering the letters to get a decreasing word (observe that this amounts to doing nothing if this largest entry is $i$). 
\end{proposition}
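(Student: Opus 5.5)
The approach is to use the supremum characterization of Proposition~\ref{IgualdadKeys}: $\mathcal C_r(Q')=\sup W(Q'_r)$, where $Q'_r$ consists of the columns of $Q'$ weakly to the right of column $r$. We compare $W(Q'_r)$ with $W(Q_r)$. Since every entry of $Q$ is at most $i$, the new $i$ sits at the bottom of column $\ell$. In the reading word (rows bottom to top, left to right), the new cell lies in a row weakly below the others in column $\ell$, and it is the largest letter. So a decreasing subword of $w(Q'_r)$ may use the new $i$ only as its first letter.

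\emph{Case $r>\ell$.} Here $Q'_r=Q_r$, so there is nothing to prove.

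\emph{Case $r=\ell$.} Column $\ell$ of $Q'$ is now the leftmost column of $Q'_\ell$. Its added bottom cell is strictly below every other cell of $Q'_\ell$, because column $\ell$ is the tallest column of $Q'_\ell$. So the new $i$ is read first. Hence $W(Q'_\ell)=W(Q_\ell)\cup\{iu: u\in W(Q_\ell),\ u_1<i\}$.

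We claim $\mathcal C_\ell(Q)$ has no letter $i$. Any $i$ in $Q_\ell$ would lie in column $\ell$ by hypothesis, hence at the bottom of column $\ell$, and so in the row where the new cell is placed, which is impossible. A cleaner route: the letters of $\mathcal C_\ell(Q)$ are those of the right-key column, and an $i$ there must come from some $i$ in $Q_\ell$. By hypothesis such an $i$ lies in column $\ell$, and then the new $i$ could not be added at the bottom of column $\ell$ (the column must stay strict).

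Thus $i\,\mathcal C_\ell(Q)$ is decreasing and belongs to $W(Q'_\ell)$. It dominates every word $iu$ and every word of $W(Q_\ell)$, because
\[
iu\le i\,\mathcal C_\ell(Q)
\]
entrywise, and, for $u\in W(Q_\ell)$, padding with $i$ in front only increases entries after a shift, using $u_k\le \mathcal C_\ell(Q)_{k-1}$? I would check this directly with the definition of $\sup$ as an entrywise maximum. In fact $\sup$ of the union is the entrywise maximum, and the first letter $i$ is maximal.

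\emph{Case $r<\ell$.} This is the main obstacle. Words using the new $i$ are $iu$, where $u$ is a decreasing subword of the part of the reading word read after the new cell. My plan here is to invoke Lemma~\ref{SupKnuthEquivalent}: $w(Q'_r)$ is Knuth equivalent to $w$ of the tableau obtained by rectifying, and the tableau $Q'_r$ is obtained from $Q_r$ by a column insertion that places $i$ in column $\ell-r+1$. I would instead argue directly: $\sup W(Q'_r)\ge \mathcal C_r(Q)$ and $\ge i\,\mathcal C_\ell(Q)$ (the latter realized inside the columns $\ge\ell$).

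The candidate $\mathcal C'_r$ dominates both of these. It is the decreasing rearrangement of $\mathcal C_r(Q)$ with its largest non-$\mathcal C_\ell(Q)$ letter $x$ replaced by $i$. It dominates $\mathcal C_r(Q)$ because $x\le i$. It dominates $i\,\mathcal C_\ell(Q)$ because $\mathcal C_\ell(Q)\subseteq \mathcal C_r(Q)$ (nestedness of key columns), so the letters of $\mathcal C_\ell(Q)$ survive and the replaced slot supplies the extra $i$.

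For the upper bound I would show every $iu\in W(Q'_r)$ satisfies $iu\le\mathcal C'_r$. Here $u$ is a decreasing subword read after the new cell, so, with length bookkeeping, $|u|<|\mathcal C_r(Q)|$. Then compare entrywise via a counting argument: for each threshold $t$, the number of letters of $iu$ that are at least $t$ is at most the corresponding number for $\mathcal C'_r$. This reduces to a known property of keys: $\mathcal C_r(Q)$ restricted to letters above $x$ coincides with $\mathcal C_\ell(Q)$ restricted to letters above $x$. To get the length bound I expect to need that the row of the new cell exceeds the height of $Q_\ell$; this is the fiddliest point, and I would verify it by comparing column lengths.
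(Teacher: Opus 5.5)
Your cases $r>\ell$ and $r=\ell$ are essentially right. The paper gives no argument here; it simply cites \cite[Proposition~39]{mSchurt0}.

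\textbf{Minor errors.} A supremum need not be attained. For the tableau with rows $1\,3$ and $2$, the reading word is $213$ and the supremum is $31$, which is not a subword. So you cannot assert $i\,\mathcal C_\ell(Q)\in W(Q'_\ell)$. Instead, from $W(Q'_\ell)=W(Q_\ell)\cup\{iu:u\in W(Q_\ell)\}$ and $u_p<u_{p-1}$, compute directly that the supremum is $i\sup W(Q_\ell)$. For $r<\ell$, the valid lower bound is $\sup W(Q'_r)\ge\sup W(Q'_\ell)=i\,\mathcal C_\ell(Q)$, since $w(Q'_\ell)$ is a subword of $w(Q'_r)$. Showing that $\mathcal C'_r$ \emph{dominates} $\mathcal C_r(Q)$ and $i\,\mathcal C_\ell(Q)$ is the wrong direction. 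You need $\mathcal C'_r$ to be their entrywise join. This holds: write $\mathcal C_r(Q)=AxB$ with $A$ the letters exceeding $x$. Then $\mathcal C_\ell(Q)=AB'$ with $B'$ a subword of $B$, $\mathcal C'_r=iAB$, and the join of $AxB$ and $iAB'$ is $iAB$.

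\textbf{Genuine gap: the upper bound for $r<\ell$.} This is the heart of the proposition, and it is not proved. Your threshold reduction rests on three facts: $u\le\mathcal C_r(Q)$, $|u|\le k-1$, and the agreement of $\mathcal C_r(Q)$ and $\mathcal C_\ell(Q)$ above $x$. These settle the thresholds $t>x$, where the inequality is automatic. For $t\le x$ you need the strict bound $\#\{p:u_p\ge t\}\le\#\{c\in\mathcal C_r(Q):c\ge t\}-1$, and none of the three facts gives it. Indeed, $u=Ax$ (when $|A|\le k-2$) satisfies all three, yet $iAx\not\le iAB$. You must show that decreasing words confined to rows $1,\dots,k-1$ of $Q_r$ cannot behave like this. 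That requires the shape: column $\ell$ of $Q$ has length $k-1$, while every column $j<\ell$ has length at least $k$.

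\textbf{One way to close the gap.} Let $u_q$ lie in cell $(s_q,j_q)$, so $k>s_1>s_2>\cdots$.
\begin{itemize}
\item If every $j_q\ge\ell$, then $u\in W(Q_\ell)$, so $u\le AB'\le AB$.
\item Otherwise, let $p$ be the first index with $j:=j_p<\ell$. By column strictness, $v=Q(k,j)\,Q(s_1,j)\cdots Q(s_{p-1},j)\,u_pu_{p+1}\cdots$ is a decreasing word in $W(Q_r)$ of length $|u|+1$. Hence for $q\ge p$ we get $u_q=v_{q+1}\le(AxB)_{q+1}\le(AB)_q$. For $q<p$, $u_1\cdots u_{p-1}\in W(Q_\ell)$ gives $u_q\le(AB')_q\le(AB)_q$.
\end{itemize}
Thus $iu\le iAB=\mathcal C'_r$. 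Every word not using the new $i$ lies in $W(Q_r)$ and is bounded by $AxB\le iAB$. This gives $\sup W(Q'_r)\le\mathcal C'_r$.
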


\begin{proof}
    See~\cite[Proposition~39]{mSchurt0}.
\end{proof}

\begin{corollary}
\label{DesigualdadQlambda}
The effect on $\mathcal{C}^{\lambda}_{r}(Q')$ is
\[
\mathcal{C}^{\lambda}_{r}(Q')
=
\left\{
\begin{array}{ll}
\mathcal{C}_r^\lambda(Q)
&\quad \text{if } r>\ell,\\[2mm]
\mathcal{C}_r^\lambda(Q) \lambda[i]
&\quad \text{if } r=\ell,\\[2mm]
\mathcal{C}'_r
&\quad \text{if } r<\ell,
\end{array}
\right.
\]
where $\mathcal{C}_r'$ is obtained from $\mathcal{C}_r^\lambda(Q)$ by identifying the smallest letter remaining after deleting the occurrences of $\mathcal{C}_\ell^\lambda(Q)$, with multiplicities counted, replacing that occurrence by $\lambda[i]$, and then reordering the resulting word in weakly decreasing order.
\end{corollary}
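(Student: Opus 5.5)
The plan is to obtain the corollary from Proposition~\ref{PropInsercion} by applying the $\lambda$-operation to each of its three cases. Throughout I use one fact: since $\lambda$ is a partition, the map $x\mapsto\lambda[x]$ is weakly decreasing. Hence, if a decreasing word $w=w_1\cdots w_k$ has $w_1>\cdots>w_k$, then $w^\lambda$ is the weakly decreasing word $\lambda[w_k]\cdots\lambda[w_1]$. Equivalently, $w^\lambda$ is the multiset $\{\lambda[w_s]\}$ written in weakly decreasing order. The $\lambda$-operation is therefore a multiset map followed by reordering, and it is enough to track multisets.

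Case $r>\ell$ is immediate, since $\mathcal C_r(Q')=\mathcal C_r(Q)$. For $r=\ell$ we have $\mathcal C_\ell(Q')=i\,\mathcal C_\ell(Q)$, which is decreasing, and every entry of $Q$ is at most $i$. So the new letter $i$ is the largest entry, and its image $\lambda[i]$ is $\le\lambda[x]$ for every other entry $x$. Placing it last keeps the word weakly decreasing, which gives $\mathcal C_\ell^\lambda(Q)\lambda[i]$.

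Case $r<\ell$ carries the real content. Write $x$ for the largest entry of $\mathcal C_r(Q)$ not lying in $\mathcal C_\ell(Q)$. As multisets, $\mathcal C_r(Q')=\mathcal C_r(Q)-\{x\}+\{i\}$, so $\mathcal C_r^\lambda(Q')$ is the multiset of $\mathcal C_r^\lambda(Q)$ with one copy of $\lambda[x]$ replaced by $\lambda[i]$, then reordered. What remains is to show that this replaced value $\lambda[x]$ is exactly the value the statement describes.

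The key steps are as follows.
\begin{enumerate}
\item Since $K_+(Q)$ is a key tableau, $\mathcal C_\ell(Q)\subseteq\mathcal C_r(Q)$ as sets. Applying $\lambda[\cdot]$ entrywise then gives $\mathcal C_\ell^\lambda(Q)\subseteq\mathcal C_r^\lambda(Q)$ as multisets; this is the content of Remark~\ref{AlmostKey}.
\item Deleting these occurrences, with multiplicity, leaves the multiset $\{\lambda[y]: y\in\mathcal C_r(Q)\setminus\mathcal C_\ell(Q)\}$.
\item Since $x$ is the largest element of $\mathcal C_r(Q)\setminus\mathcal C_\ell(Q)$ and $\lambda$ is weakly decreasing, $\lambda[x]$ is the smallest value in this remaining multiset.
\end{enumerate}
Replacing one occurrence of that smallest remaining value by $\lambda[i]$ and reordering is therefore exactly the multiset change described above, which proves the case $r<\ell$.

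The main obstacle I expect is a subtlety with ties. Several $y$ may share the value $\lambda[y]=\lambda[x]$, some inside $\mathcal C_\ell(Q)$ and some outside it. The procedure has to be read at the multiset level, removing one copy of the value rather than a specific positional letter. The final word is reordered anyway, so which copy is removed is irrelevant, and I would state this explicitly. Finally, if $x=i$ then the replacement changes nothing, consistent with the parenthetical remark in Proposition~\ref{PropInsercion}.
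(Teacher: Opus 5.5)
Your proposal is correct and follows the paper's proof, which simply applies the weakly decreasing map $\lambda[\cdot]$ entrywise to the three cases of Proposition~\ref{PropInsercion} and reorders. Your multiset argument for $r<\ell$ makes explicit what the paper leaves implicit: it uses the nesting $\mathcal C_\ell(Q)\subseteq\mathcal C_r(Q)$ to identify $\lambda[x]$ as the smallest letter left after removing $\mathcal C_\ell^\lambda(Q)$, and it notes that ties are harmless because the word is reordered.
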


\begin{proof}
Apply the weakly decreasing map $\lambda$ entrywise to the three cases of Proposition~\ref{PropInsercion}, and reorder the resulting words in weakly decreasing order.
\end{proof}

\begin{example}
Let
\[
\lambda=(7,5,5,4,4)
\qquad\text{and}\qquad
Q=
\tableau[scY]{
1 & 3 & 3\\
2 & 4 & 4\\
3\\
4
}.
\]
Add the letter \(i=5\) in column \(\ell=2\), obtaining
\[
Q'=
\tableau[scY]{
1 & 3 & 3\\
2 & 4 & 4\\
3 & 5\\
4
}.
\]
Since \(Q\) is already a key tableau, \(K_+(Q)=Q\). Moreover,
\[
K_+(Q')
=
\tableau[scY]{
1 & 3 & 3\\
3 & 4 & 4\\
4 & 5\\
5
}.
\]
In particular,
\[
\mathcal C_1(Q)=4321,\qquad
\mathcal C_2(Q)=43.
\]
and
\[
\mathcal C_1(Q')=5431,\qquad
\mathcal C_2(Q')=543.
\]

Now take \(r=1<\ell=2\). Since
\[
\mathcal C_r^\lambda(Q)=7\,5\,5\,4,
\qquad
\mathcal C_\ell^\lambda(Q)=5\,4,
\qquad
\lambda[i]=\lambda[5]=4,
\]
removing the occurrences of
\(\mathcal C_\ell^\lambda(Q)\) from
\(\mathcal C_r^\lambda(Q)\) leaves the letters \(7\) and \(5\).
Thus the smallest remaining letter is \(5\).
Replacing this occurrence of \(5\) by \(4\) and reordering gives
\[
\mathcal C_r^\lambda(Q')
=
7\,5\,4\,4.
\]
Notice that only one of the two occurrences of \(5\) is removed when
forming the difference, which is why multiplicities must be taken into
account.
\end{example}

\begin{corollary}
If $Q$ is a tableau that satisfies the conditions of
Proposition~\ref{PropInsercion}, then
\[
\mathcal{C}_r^\lambda(Q')
\leq
\mathcal{C}_r^\lambda(Q)
\qquad\text{for all } r\neq\ell.
\]
\end{corollary}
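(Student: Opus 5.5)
The plan is to read the inequality off Corollary~\ref{DesigualdadQlambda} case by case, using the order on weakly decreasing words from Section~\ref{seckeys}: two words of the same length are compared entrywise. The case $r=\ell$ is excluded by hypothesis.

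For $r>\ell$ the corollary gives $\mathcal C_r^\lambda(Q')=\mathcal C_r^\lambda(Q)$, so the inequality holds with equality.

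The case $r<\ell$ carries the content. Here $\mathcal C_r^\lambda(Q')$ is obtained from $\mathcal C_r^\lambda(Q)$ by replacing one occurrence of a letter $b$ by $\lambda[i]$ and then reordering into weakly decreasing order; the length is unchanged. If we show $\lambda[i]\leq b$, the claim follows from an elementary fact about multisets.

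That fact is the following. If a weakly decreasing word $u$ is obtained from a weakly decreasing word $v$ of the same length by lowering one letter and re-sorting, then $u_k\leq v_k$ for every $k$. To see this, note that for each threshold $t$ the number of letters $\geq t$ does not increase. The $k$-th largest entry is the largest $t$ for which at least $k$ letters are $\geq t$, so it can only decrease.

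To show $\lambda[i]\leq b$, I would go back to Proposition~\ref{PropInsercion}. There, $\mathcal C_r(Q')$ is obtained from $\mathcal C_r(Q)$ by replacing its largest entry $a$ not lying in $\mathcal C_\ell(Q)$ by $i$. Every entry of $Q$ is at most $i$, and $\mathcal C_r(Q)$ consists of entries of $Q$ (it is the supremum of decreasing subwords of $Q_r$). Hence $a\leq i$, and since $\lambda$ is weakly decreasing, $\lambda[i]\leq\lambda[a]$.

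It remains to identify $b$ with $\lambda[a]$, or at least to show $b\geq\lambda[a]$. In the corollary's description, $b$ is the smallest letter left in $\mathcal C_r^\lambda(Q)$ after deleting $\mathcal C_\ell^\lambda(Q)$ with multiplicity. Since $\lambda$ is order-reversing, the largest entry outside $\mathcal C_\ell(Q)$ maps to the smallest $\lambda$-value among the leftover letters. So the leftover multiset is exactly $\lambda$ applied to $\mathcal C_r(Q)\setminus\mathcal C_\ell(Q)$, and $b=\lambda[a]$; this uses $\mathcal C_\ell(Q)\subseteq\mathcal C_r(Q)$, as in Remark~\ref{AlmostKey}.

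The main obstacle is this bookkeeping step, that the multiset difference commutes with applying $\lambda$ when there are ties. It would be cleaner to bypass the corollary's phrasing and apply the multiset fact directly. Indeed, $\mathcal C_r^\lambda(Q')$ is $\lambda$ applied to the multiset $\mathcal C_r(Q)$ with $a$ replaced by $i\geq a$. Applying $\lambda$ turns this into lowering the letter $\lambda[a]$ to $\lambda[i]$, which gives $\mathcal C_r^\lambda(Q')\leq\mathcal C_r^\lambda(Q)$ immediately. The remaining edge case, where $a=i$ and nothing changes, gives equality.
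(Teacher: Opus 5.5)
Your proof is correct, and in its final ``cleaner'' form it is exactly the paper's argument: equality for $r>\ell$, and for $r<\ell$ the replacement of a letter $a\leq i$ by $i$ in Proposition~\ref{PropInsercion} becomes, under the weakly decreasing map $\lambda[\cdot]$, the lowering of one letter $\lambda[a]$ to $\lambda[i]$, so re-sorting can only decrease entries. Your threshold-counting argument for the re-sorting step, and your check that $b=\lambda[a]$ in the phrasing of Corollary~\ref{DesigualdadQlambda}, simply make explicit what the paper compresses into ``after reordering.''
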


\begin{proof}
For $r>\ell$, the result follows from the equality $\mathcal{C}_r(Q')=\mathcal{C}_r(Q)$.
If $r<\ell$, then $\mathcal{C}_r(Q')$ is obtained from $\mathcal{C}_r(Q)$ by replacing a letter $q\leq i$ by $i$.
Since $\lambda$ is weakly decreasing, $\lambda[i]\leq\lambda[q]$.
After reordering, this gives
\[
\mathcal{C}_r^\lambda(Q')\leq\mathcal{C}_r^\lambda(Q).
\]
\end{proof}

\begin{remark}
In every RSK-compatible step
\[
(P',Q')=(P,Q)\leftarrow\binom{i}{j},
\]
the recording tableaux $Q$ and $Q'$ satisfy the hypotheses of Proposition~\ref{PropInsercion}.
\end{remark}

\begin{proposition} \label{propvuj}
Let  $c_{k-1}, \ldots , c_{2}, c_{1}, c_{0} (=j)$
be the insertion path of  $P'=P \leftarrow j$.
Given a word $w=w_t\cdots w_1  \in W(P_r')$, let $p$ be the largest integer such that $w_p$ intersects the insertion path (if there is no such integer, then $p=1$).
Let $v=w_t\cdots w_p v_{p-1} \cdots v_1 \in W(P'_r)$, where $v_{p-1} \cdots v_1$ is defined in the following way:
if $w_i$ lies in row $s+1$ weakly to the left (in the same row) of $c_{s}$, then let $v_i=c_{s}$.
Otherwise, let $v_i=w_i$. 
We then have that  $w\leq v$,  with  either $v \in W(P_r)$ or $v=uj$ for some $u \in W(P_r)$.
\end{proposition}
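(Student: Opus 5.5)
We work directly with the geometry of the insertion path. Recall that the path $c_{k-1},\ldots,c_0$ of $P\leftarrow j$ satisfies $c_0<c_1<\cdots<c_{k-1}$. In $P'$, the letter $c_s$ sits in row $s+1$, at the position where $c_{s-1}$ was in $P$ (with $c_{-1}$ playing no role; $c_0=j$ sits in row $1$). The path moves weakly left as it goes down. Moreover, in $P'$ each $c_s$ is the rightmost entry of row $s+1$ that is $\le c_s$. Equivalently, every entry of row $s+1$ of $P'$ weakly left of $c_s$ is $\le c_s$, and every entry strictly right of it is $>c_s$.

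The first step is to show that $v$ is decreasing and $w\le v$. The inequality $w_i\le v_i$ is immediate: either $v_i=w_i$, or $w_i$ lies in row $s+1$ weakly left of $c_s$, so $w_i\le c_s=v_i$. Hence $w\le v$ in the order on $W(\mathcal A)$, since the lengths agree.

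For strict decrease, note that the reading word reads rows bottom to top. Because $w$ is decreasing, the letters $w_{p-1},\ldots,w_1$ occur later in the reading word, so they lie weakly higher in $P'_r$ than $w_p$ and are strictly smaller. Consider consecutive letters $w_{i+1}>w_i$. If both are replaced, they must lie in distinct rows $s'+1>s+1$: two letters in the same row appear left to right in the reading word, so they cannot be strictly decreasing. Then $v_{i+1}=c_{s'}>c_s=v_i$. If only $w_i$ is replaced, we need $w_{i+1}>c_s$. The key observation is that $w_{i+1}$ is in a lower row, or in the same row to the left. In the lower-row case, we use the column strictness of $P'$ together with the leftward movement of the path: any entry below row $s+1$ that precedes $w_i$ and is not replaced lies strictly right of the path in its row, and so exceeds the corresponding $c_{s''}\ge c_{s}$. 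If only $w_{i+1}$ is replaced, then $v_{i+1}=c_{s'}\ge w_{i+1}>w_i$. Finally, at the junction between $w_p$ and $v_{p-1}$, we need $w_p>v_{p-1}$. This uses the maximality of $p$: $w_p$ lies on the path or in a row below it, and the argument is the same. I expect this case analysis to be the main obstacle. It requires carefully tracking positions relative to the path together with the restriction to the columns $\ge r$; the path cells in $P'_r$ are exactly those $c_s$ whose column is $\ge r$.

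The second step is to identify $v$ as a subword of the reading word of $P_r$, possibly with $j$ appended. The letters $w_t,\ldots,w_p$ lie off the path, or form the single path letter $w_p$, or lie below the path. Off the path, $P'$ and $P$ agree: the only changed cells are path cells, and $P'$ has one new cell at the end of row $k$. The letters $v_{p-1},\ldots,v_1$ are either unchanged entries or path letters $c_s$. In $P$, the letter $c_s$ (for $s\ge1$) sits in row $s$, strictly above row $s+1$, in a column weakly to the right of its new position. Hence it remains in $P_r$, and in the reading word of $P$ it appears later, which is compatible with decreasing order. The letter $c_0=j$ is not in $P$; if it occurs, it is $v_1$, since it is the smallest replaced value and lies in the top row. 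In that case $v=uj$ with $u\in W(P_r)$; otherwise $v\in W(P_r)$.

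We still have to check that the reading order is preserved when a letter $c_s$ is moved up one row. Its predecessors $v_{i+1}$ must occur earlier in the reading word of $P$, and the unreplaced letters left of $c_s$ in row $s+1$ were all replaced, so no conflict arises. Similarly, $w_p$ lies at or below the path and is unaffected, except when $w_p$ is itself a path cell. In that case we argue as in \cite[Proposition~41]{mSchurt0}, whose proof I would follow closely.
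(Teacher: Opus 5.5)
Your first step is fine: $w\le v$ is immediate, and the strict decrease of $v$ follows from the facts you isolate about row $s+1$ of $P'$. One slip there: in $P'$ the letter $c_s$ occupies the cell that held $c_{s+1}$ in $P$, not $c_{s-1}$.

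The gap is in your second step, which is the only place where the replacement rule does any work. Moving a path letter $c_s$ from row $s+1$ of $P'$ up to row $s$ of $P$ cannot hurt its predecessors in $v$, but it can hurt its successor. Suppose $v_i=c_s$, either because $i=p$ with $w_p=c_s$ or because $w_i$ is replaced by $c_s$. Suppose also that $v_{i-1}=w_{i-1}$ is unreplaced and lies in row $s$ of $P'$, hence strictly right of $c_{s-1}$. Then in $P$ this letter sits in the same row as the relocated $c_s$ and to its right, so $v$ would not be a subword of $w(P_r)$.

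Your sentence about ``unreplaced letters left of $c_s$ in row $s+1$'' looks at the wrong row, and it contradicts itself. You then defer ``the case when $w_p$ is itself a path cell'' to \cite{mSchurt0}. But by the definition of $p$, $w_p$ is a path cell whenever $w$ meets the path; otherwise $p=1$ and $v=w$. So the deferred case is exactly the case in which any replacement happens, and the substance of the proposition is left unproved. The paper gives no argument either; it cites \cite[Proposition~46]{mSchurt0}.

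The missing ingredient is an inequality read in $P$ rather than in $P'$: in row $s$ of $P$, every entry strictly right of the cell that contained $c_s$ is $\ge c_s$. The cell of $w_{i-1}$ is off the path, so its entry is the same in $P$ and $P'$. This gives $w_{i-1}\ge c_s\ge w_i>w_{i-1}$, a contradiction. Here $w_i\le c_s$ holds both when $w_i=w_p=c_s$ and when $w_i$ is replaced by $c_s$.

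An equivalent route uses rows directly. One has $\operatorname{row}_P(v_{i+1})\ge\operatorname{row}_{P'}(w_{i+1})-1\ge\operatorname{row}_{P'}(w_i)\ge\operatorname{row}_P(v_i)$. A tie would place $v_{i+1}>v_i$ in one row of $P$ with the larger letter on the left, contradicting semistandardness given your step one.

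Your argument is complete once you add this check together with three further facts:
\begin{itemize}
\item relocated letters stay in columns $\ge r$, because the path moves weakly left as it descends;
\item off-path cells are unchanged between $P$ and $P'$;
\item $j$ can only occur as $v_1$.
\end{itemize}
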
  
\begin{proof}
This is precisely~\cite[Proposition~46]{mSchurt0}, in the present notation.
\end{proof}

\begin{example}
 If we insert the letter $j=3$ in  
 $$
 P=   \tableau[scY]{  1 & 2 & 4 & 5 \\  3  \\  6   
} 
 $$
then the insertion path (highlighted in blue) is given by
$$
P' = \tableau[scY]{  1 & 2 & {\color{blue} 3} & 5 \\  3 & {\color{blue} 4}  \\  6   
}
$$
Now, consider $w=42 \in W(P_{2}')$ as highlighted in red in $\tableau[scY]{  1 & {\color{red} 2} & 3 & 5 \\  3 & {\color{red} 4}  \\ 6 }$.
The procedure in Proposition~\ref{propvuj} then gives us $v=43 \in W (P_{2}')$ as highlighted in red in $\tableau[scY]{  1 & 2 & {\color{red} 3} & 5 \\  3 & {\color{red} 4}  \\ 6 }$, where we see that $v=uj$ with $u=4 \in W(P_{2})$ and $j=3$.

If we had chosen instead  $w=632  \in W(P_{1}')$ as highlighted in red in  $\tableau[scY]{  1 & {\color{red} 2} & 3 & 5 \\  {\color{red} 3} & 4  \\ {\color{red} 6}     }$, which does not intersect the insertion path, the procedure would then have given us $v = w \in W (P_{1}')$, with $v \in W ( P_{1} )$.
\end{example}

The next proposition adapts \cite[Proposition~50]{mSchurt0} to the present $\lambda$-dependent boundary.
\begin{proposition}
\label{PropInsPQlambda}
Let $(P',Q')=(P,Q)\leftarrow \binom{i}{j}$ be RSK-compatible.
If $(P,Q)$ and $\binom{i}{j}$ are both $\lambda$-admissible, then $(P',Q')$ is also a $\lambda$-admissible pair.
\end{proposition}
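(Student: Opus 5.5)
We verify criterion 3 of Proposition~\ref{propcriterialambda} for $(P',Q')$: for every column index $r$ of $P'$ and every $w\in W(P'_r)$ we must show $w\leq\mathcal C_r^\lambda(Q')$. Let $\ell$ be the column of the new cell $Q'/Q$. First we apply Proposition~\ref{propvuj} to $w$. This produces $v\geq w$ with either $v\in W(P_r)$ or $v=uj$ with $u\in W(P_r)$. Since $\leq$ is transitive, it suffices to bound $v$. In the first case, admissibility of $(P,Q)$ gives $v\leq\sup W(P_r)\leq\mathcal C_r^\lambda(Q)$. In the second case it gives $u\leq\mathcal C_r^\lambda(Q)$. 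We then compare with $\mathcal C_r^\lambda(Q')$ using Corollary~\ref{DesigualdadQlambda}, splitting into the cases $r>\ell$, $r=\ell$ and $r<\ell$. A column $r$ that is new, which can only happen when $r=\ell$ and the cell starts a new column, is handled as the case $r=\ell$ with $Q_r$ empty.

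\emph{Case $r>\ell$.} Here $\mathcal C_r^\lambda(Q')=\mathcal C_r^\lambda(Q)$, so the case $v\in W(P_r)$ is immediate. The case $v=uj$ cannot occur. Indeed, $P'_r$ has the same number of cells as $P_r$, since the shape grew in column $\ell<r$. The subword $v$ would use a letter $j$ lying in a column $\geq r$, and by Proposition~\ref{propvuj} that letter is the inserted letter $j$ in the first row. But the inserted $j$ sits in a column $\leq\ell$, because the bumping path moves weakly left and ends in column $\ell$. So this case is excluded.

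\emph{Case $r=\ell$.} Here $\mathcal C_\ell^\lambda(Q')=\mathcal C_\ell^\lambda(Q)\lambda[i]$, obtained by appending the smallest letter. If $v=uj$, then $u\leq\mathcal C_\ell^\lambda(Q)$ entrywise. The extra letter satisfies $j\leq\lambda[i]$ by admissibility of the biletter, and $v$ has length at most one more than $u$. Hence $uj\leq\mathcal C_\ell^\lambda(Q)\lambda[i]$, after checking the length bookkeeping ($|u|\leq c_\ell$, with equality forcing the appended position). If $v\in W(P_\ell)$, then $v$ is bounded by a prefix of $\mathcal C_\ell^\lambda(Q)$, which is itself a prefix of the new word.

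\emph{Case $r<\ell$ (main obstacle).} Here $\mathcal C_r^\lambda(Q')$ is obtained by replacing the smallest letter of $\mathcal C_r^\lambda(Q)\setminus\mathcal C_\ell^\lambda(Q)$ by $\lambda[i]$, so it may be strictly smaller than $\mathcal C_r^\lambda(Q)$. The bound $v\leq\mathcal C_r^\lambda(Q)$ is therefore insufficient, and this is where the argument needs care. I plan to adapt \cite[Proposition~50]{mSchurt0}. Write $v$ as its part inside the columns $\geq\ell$ of $P'$ together with the remaining letters. The letters coming from columns $\geq\ell$ form a decreasing word bounded, by the case $r=\ell$ already proved, by $\mathcal C_\ell^\lambda(Q')$, and the full word is bounded by $\mathcal C_r^\lambda(Q)$ (enlarged by $j$ when $v=uj$). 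A counting argument then applies. The $k$-th largest entry of $v$ is at most the $k$-th entry of the multiset obtained by inserting $\lambda[i]$ into the appropriate place, because at most $|\mathcal C_r^\lambda(Q)|$ letters are available. The only position where the bound decreases is the replaced letter, and any letter of $v$ in that position is either the $j$ (with $j\leq\lambda[i]$), or forced to lie in columns $\geq\ell$, where the bound $\mathcal C_\ell^\lambda(Q)\lambda[i]$ applies.

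Making this "merge of two bounds" precise is where I expect the work to lie. The repeated parts of $\lambda$ complicate it, since multiset differences must be counted with multiplicity, and I would use the nesting $\mathcal C_\ell^\lambda\subseteq\mathcal C_r^\lambda$ from Remark~\ref{AlmostKey} to control them. Finally, since $\sup W(P'_r)$ is the supremum of such $w$, the inequality passes to the supremum, and Proposition~\ref{propcriterialambda} yields $\lambda$-admissibility of $(P',Q')$.
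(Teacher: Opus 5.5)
Your overall structure (criterion 3 of Proposition~\ref{propcriterialambda}, the word $v$ from Proposition~\ref{propvuj}, three cases via Corollary~\ref{DesigualdadQlambda}) matches the paper's, and your case $r=\ell$ is fine. However, the case $r>\ell$ contains a wrong step, and the case $r<\ell$ is a genuine gap.

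\textbf{The case $r>\ell$.} You claim that $v=uj$ cannot occur, but your justification reverses the direction of the bumping path. The path moves weakly \emph{left} as it descends and ends in column $\ell$. So the inserted $j$ in row $1$ of $P'$ lies in a column $\geq\ell$, and it can lie in $P'_r$.

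The example following Proposition~\ref{propvuj} shows this: insert $3$ into the tableau with rows $1245$, $3$, $6$. There $\ell=2$, the $3$ lands in column $3$, and $w=3\in W(P'_3)$ gives $v=uj$ with $u=\varepsilon$ and $v\notin W(P_3)$.

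The repair is easy and is what the paper does. Every entry of $Q'$ is at most $i$ and $\lambda$ is weakly decreasing, so every letter of $\mathcal C_r^\lambda(Q')$ is at least $\lambda[i]\geq j$. Since $|v|=|w|\leq c_r$, the last letter of $w$ is bounded. The remaining letters are bounded via $u\leq\mathcal C_r^\lambda(Q)=\mathcal C_r^\lambda(Q')$.

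\textbf{The case $r<\ell$.} Here you only give a plan, and its guiding heuristic is incorrect. Write $\mathcal C_\ell^\lambda(Q)=US$ and $\mathcal C_r^\lambda(Q)=VbS$, with $S$ the longest common suffix, and set $a=\lambda[i]$. Then $\mathcal C_r^\lambda(Q')=VSa$. After reordering, the positional bound on the last $|S|+1$ letters changes from $bS$ to $Sa$. It can drop at several positions (e.g.\ $753\mapsto731$), not only at ``the replaced letter''.

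Your proposed bound on the letters of $w$ lying in columns $\geq\ell$ also has two problems. Bounding them by $\mathcal C_\ell^\lambda(Q')$ controls their positions within that subword, not within $w$, so it gives bounds at the wrong indices. And nothing forces the tail of $w$ into those columns.

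The missing idea is to argue at the level of the supremum, using that $K_+(P')$ is a key tableau.
\begin{itemize}
\item Your $v$-argument does give $\mathcal C_r(P')=\sup W(P'_r)\leq VbS$, since column $r$ keeps its length and $j\leq a\leq\min\mathcal C_r^\lambda(Q)$. This bounds the first $|V|$ letters of $\mathcal C_r(P')$ by $V$.
\item Since $\mathcal C_\ell(P')$ is a subword of $\mathcal C_r(P')$ and both are decreasing, the last $|S|+1$ letters of $\mathcal C_r(P')$ are componentwise at most the last $|S|+1$ letters of $\mathcal C_\ell(P')$.
\item The already proved case $r=\ell$ gives $\mathcal C_\ell(P')\leq USa$, with both words of length $k$ and $|S|+1\leq k$. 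So those last letters are at most $Sa$.
\end{itemize}
Combining the prefix and suffix bounds yields $\mathcal C_r(P')\leq VSa=\mathcal C_r^\lambda(Q')$. This is the step your ``merge of two bounds'' needs.
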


\begin{proof}

Let the new cell lie in row $k$ and column $\ell$. For each $r$, let $c_r$ be the length of column $r$ of $P'$. We must show that every $w\in W(P'_r)$ satisfies $w\leq\mathcal C_r^\lambda(Q')$.

The admissibility of $(P,Q)$ says that $\sup  W(P_r) \leq  {\mathcal C}_r^\lambda(Q)$ (see Proposition~\ref{propcriterialambda}), so that every $w \in  W(P_r)$ satisfies $w \leq {\mathcal C}_r^\lambda(Q)$.
Additionally, we will use $v$ with $w \leq v$ as in Proposition~\ref{propvuj}. Since every entry of $Q'$ is at most $i$ and $\lambda$ is weakly decreasing, every letter of $\mathcal C_r^\lambda(Q)$ and $\mathcal C_r^\lambda(Q')$ is at least $\lambda[i]$.

{\it The case $r > \ell$.} Corollary~\ref{DesigualdadQlambda} gives here ${\mathcal C}_r^\lambda(Q')= {\mathcal C}_r^\lambda(Q)$.
If $v \in W(P_r)$, the observation yields $w \leq v \leq {\mathcal C}_r^\lambda(Q)={\mathcal C}_r^\lambda(Q')$.
If instead $v=uj$, then replacing $v$ by $u$ in the previous argument shows that all the letters of $w$, except possibly its last letter, are bounded by $\mathcal{C}^{\lambda}_{r}(Q')$.
Denote the last letter of $w$ by $y$. Then
$$
y \leq j \leq \lambda [i] \leq (\mathcal{C}^{\lambda}_{r}(Q'))_{c_{r}} \leq (\mathcal{C}^{\lambda}_{r}(Q'))_{s}
$$
since $|w|=s \leq c_{r}$, $\mathcal{C}^{\lambda}_{r}(Q')$ is weakly decreasing and $j \leq \lambda [i]$ as $(i,j)$ is $\lambda$-admissible.

\smallskip
\noindent
{\it The case $r=\ell$.} Corollary~\ref{DesigualdadQlambda} gives here
  ${\mathcal C}_r^\lambda(Q')= {\mathcal C}_r^\lambda(Q) \lambda[i]$.
  If $v \in W(P_r)$, the observation yields
$$
w \leq v \leq  {\mathcal C}_r^\lambda(Q) \leq {\mathcal C}_r^\lambda(Q)  \lambda[i] = {\mathcal C}_r^\lambda(Q').
$$
If instead $v=uj$, then all the letters of $w$, except possibly its last letter, are bounded by $u\in W(P_r)$, and hence by $\mathcal{C}_r^\lambda(Q)$. Denote the last letter of $w$ by $y$. Then
$$
y \leq j \leq \lambda [i] = \min \mathcal{C}^{\lambda}_{r}(Q') \leq (\mathcal{C}^{\lambda}_{r}(Q'))_{s}.
$$
Therefore $w \leq \mathcal{C}^{\lambda}_{r}(Q')$.

\smallskip
\noindent
{\it The case $r<\ell$.}
Set $a=\lambda[i]$. We first show that $\mathcal C_r(P')\leq\mathcal C_r^\lambda(Q)$.
For $w\in W(P_r')$, Proposition~\ref{propvuj} gives
$w\leq v$, where either $v\in W(P_r)$ or $v=uj$
with $u\in W(P_r)$.
In the first case, admissibility gives
$v\leq\mathcal C_r^\lambda(Q)$.
In the second, the letters of $u$ satisfy the corresponding
bounds by admissibility, while $j\leq a\leq\min\mathcal C_r^\lambda(Q)$. Since $r<\ell$, column $r$ has unchanged length, so
$|v|=|w|\leq|\mathcal C_r^\lambda(Q)|$.
Thus $w\leq\mathcal C_r^\lambda(Q)$ in both cases.
Taking the supremum gives the claimed bound.

The word $\mathcal C_\ell^\lambda(Q)$ is a subword of
$\mathcal C_r^\lambda(Q)$, with occurrences counted with
multiplicity. Let $S$ be their longest common suffix, and write
\[
\mathcal C_\ell^\lambda(Q)=US,
\qquad
\mathcal C_r^\lambda(Q)=VbS.
\]
Here $b$ is the smallest remaining letter of
$\mathcal C_r^\lambda(Q)$ after removing the occurrences of
$\mathcal C_\ell^\lambda(Q)$. Corollary~\ref{DesigualdadQlambda} therefore gives $\mathcal C_r^\lambda(Q')=VSa$.
The bound $\mathcal C_r(P')\leq VbS$ already bounds the first
$|V|$ letters by $V$. It remains to bound the final $|S|+1$
letters by $Sa$.

Since $K_+(P')$ is a key tableau,
$\mathcal C_\ell(P')$ is a subword of $\mathcal C_r(P')$.
As both words are decreasing, the final $|S|+1$ letters of
$\mathcal C_r(P')$ are componentwise at most the final
$|S|+1$ letters of $\mathcal C_\ell(P')$.
The latter are bounded by $Sa$, because the case $r=\ell$
already gives
\[
\mathcal C_\ell(P')
\leq\mathcal C_\ell^\lambda(Q)a=USa,
\]
and both words have length $k$.
Combining the prefix and suffix bounds yields
\[
\mathcal C_r(P')\leq VSa=\mathcal C_r^\lambda(Q'),
\]
as required.
\end{proof}

For the reverse implication, we first record the following consequence of Lemma~\ref{SupKnuthEquivalent}.

\begin{lemma}
\label{LemaDesfasaje} Given
a tableau  $P$ and a letter $j$, let  $P'=P \leftarrow j$ be such that the insertion path ends in column $\ell$.
For all $r \leq \ell$ we have that $\sup W(P_r) \leq \sup W(P_r')$.
\end{lemma}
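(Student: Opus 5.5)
We prove the inequality by exhibiting, for each $r\leq\ell$, a word of $W(P'_r)$ that is Knuth-equivalent, as a reading word of a subtableau, to something dominating every element of $W(P_r)$. Since $\sup$ is monotone under inclusion of the sets of decreasing words, it suffices to show
\[
\sup W(P_r)\leq \sup W(w')
\]
for some word $w'\equiv w(P'_r)$. By Lemma~\ref{SupKnuthEquivalent}, the right-hand side is then $\sup W(P'_r)$.

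The natural candidate comes from the restriction of the insertion to columns $\geq r$. Because the insertion path moves weakly left as it goes down and ends in column $\ell\geq r$, every cell of the path lies in a column $\geq \ell\geq r$. The insertion therefore only modifies entries in the part of $P$ to the right of column $r-1$. The columns $<r$ are untouched, so $P'_r$ is the tableau obtained from $P$ by replacing the entries $c_s$ with the bumped values and appending the last one.

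I would show that $P'_r$ is precisely $P_r\leftarrow j$, the row insertion of $j$ into the skew-free tableau $P_r$. In each row, the leftmost entry of $P$ strictly larger than the inserted letter lies in a column $\geq r$, because the path stays in columns $\geq\ell\geq r$. Hence the same entry is bumped when $P_r$ is regarded as a tableau (left-justified after shifting columns). This argument is essentially bookkeeping, and it is where the hypothesis $r\leq\ell$ is used.

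Once $P'_r=P_r\leftarrow j$ is established, we have $w(P'_r)\equiv w(P_r)\,j$. Since $W(P_r)\subseteq W(w(P_r)j)$ (every decreasing subword of $w(P_r)$ is a decreasing subword of $w(P_r)j$), the supremum over the larger set dominates. Lemma~\ref{SupKnuthEquivalent} then gives
\[
\sup W(P_r)\leq \sup W(w(P_r)j)=\sup W(P'_r).
\]

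The main obstacle is justifying carefully that the bumping inside $P$ coincides with the bumping inside $P_r$. In particular, one must rule out that, in some row, the path enters a column $<r$. This requires the standard fact that insertion paths move weakly left, combined with the fact that the final cell is in column $\ell\geq r$. If the path moved left of column $r$ in some row, it could not return to column $\ell\geq r$ below.
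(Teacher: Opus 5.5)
Your proof is correct and follows the paper's argument. Both rest on $w(P'_r)\equiv w(P_r)\,j$, the inclusion $W(P_r)\subseteq W(w(P_r)j)$, and the Knuth invariance of $\sup$ from Lemma~\ref{SupKnuthEquivalent}; the only difference is that you explicitly justify $P'_r=P_r\leftarrow j$ (using that the insertion path moves weakly left and ends in column $\ell\geq r$), a step the paper states without proof.
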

\begin{proof}
If $r \leq \ell$, then $w(P_{r})j$ is Knuth equivalent to $w(P_{r}')$.
Denote by $ W(P_{r})j$ the set of decreasing words $w=uj$ with $u \in  W(P_{r})$.
Since $W(P_{r}) \subseteq W(P_{r}) \cup  W(P_{r})j$, we then have from Lemma~\ref{SupKnuthEquivalent} that
$$
\sup W(P_r)  \leq \sup \bigl(  W(P_{r}) \cup  W(P_{r})j \bigr)= \sup W(P_r').
$$
\end{proof}

The backward implication requires the corresponding $\lambda$-dependent version of \cite[Proposition~53]{mSchurt0}.

\begin{proposition}
\label{PropReciprocalambda}
Let $(P',Q')=(P,Q)\leftarrow \binom{i}{j}$ be RSK-compatible.
If the pair $(P,Q)$ is not $\lambda$-admissible, then the
pair $(P',Q')$ is not $\lambda$-admissible.

\end{proposition}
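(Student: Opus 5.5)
We argue by contraposition on a single column: since $(P,Q)$ is not $\lambda$-admissible, Proposition~\ref{propcriterialambda} gives an index $r$ with $\sup W(P_r)\not\leq \mathcal C_r^\lambda(Q)$. Fix such an $r$ and a position $s$ with $\bigl(\mathcal C_r(P)\bigr)_s>\bigl(\mathcal C_r^\lambda(Q)\bigr)_s$; because $P$ and $Q$ have the same shape, the lengths agree, so the failure is entrywise. We want an $r'$ with $\sup W(P'_{r'})\not\leq\mathcal C_{r'}^\lambda(Q')$. Let the new cell lie in column $\ell$. We split into the cases $r\leq\ell$ and $r>\ell$, as in the proof of Proposition~\ref{PropInsPQlambda}.

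\emph{Case $r>\ell$.} Here $Q'$ and $Q$ agree on columns weakly right of $r$, so Corollary~\ref{DesigualdadQlambda} gives $\mathcal C_r^\lambda(Q')=\mathcal C_r^\lambda(Q)$. On the $P$ side, the insertion path moves weakly left as it descends and ends in column $\ell<r$. So the restriction of $P'$ to columns $\geq r$ is obtained from $P_r$ by replacing some entries by smaller ones: the path entries $c_s$ that land in columns $\geq r$ replace larger bumped letters, and the letters bumped out of $P_r$ leave it. We need $\sup W(P'_r)\geq \sup W(P_r)$ coordinatewise. I would get this by Knuth equivalence. Let $t$ be the row where the path leaves columns $\geq r$ (or $t=0$ if it never enters them). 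Then $w(P'_r)$ together with the letter bumped out of $P_r$ is Knuth equivalent to $w(P_r)$ with $c_t$ inserted appropriately, which reduces to a statement like Lemma~\ref{LemaDesfasaje}. This is the main obstacle. Rather than redo it, I would mirror the argument of \cite[Proposition~53]{mSchurt0}, whose $P$-side analysis does not depend on $\lambda$. Its conclusion is that $\sup W(P_r)\leq \sup W(P'_r)$ for every $r$ except possibly in length, and length does not change here. Granting this, $\sup W(P'_r)\geq\sup W(P_r)\not\leq\mathcal C_r^\lambda(Q)=\mathcal C_r^\lambda(Q')$, so $(P',Q')$ is inadmissible.

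\emph{Case $r\leq\ell$.} Lemma~\ref{LemaDesfasaje} gives $\sup W(P_r)\leq\sup W(P'_r)$, so the $s$-th letter of $\mathcal C_r(P')$ is still $>\bigl(\mathcal C_r^\lambda(Q)\bigr)_s$. For $r=\ell$, Corollary~\ref{DesigualdadQlambda} gives $\mathcal C_\ell^\lambda(Q')=\mathcal C_\ell^\lambda(Q)\lambda[i]$, which has the same first $c_\ell$ letters, so the violation persists at position $s$. For $r<\ell$, $\mathcal C_r^\lambda(Q')$ is obtained by replacing one letter $b$ by $a=\lambda[i]\leq b$ and reordering. This gives $\mathcal C_r^\lambda(Q')\leq\mathcal C_r^\lambda(Q)$ entrywise, by the corollary following Corollary~\ref{DesigualdadQlambda}. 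Hence $\bigl(\mathcal C_r(P')\bigr)_s>\bigl(\mathcal C_r^\lambda(Q)\bigr)_s\geq\bigl(\mathcal C_r^\lambda(Q')\bigr)_s$. In every case criterion~3 of Proposition~\ref{propcriterialambda} fails for $(P',Q')$, which proves the proposition. The only genuinely delicate point is the monotonicity $\sup W(P_r)\leq\sup W(P'_r)$ for columns $r>\ell$, which I would import from \cite{mSchurt0} since it is independent of the boundary $\lambda$.
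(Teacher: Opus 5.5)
\paragraph{The case $r\le\ell$ is fine, the case $r>\ell$ has a gap.} Your treatment of $r\le\ell$ coincides with the paper's: Lemma~\ref{LemaDesfasaje} together with Corollary~\ref{DesigualdadQlambda} and the corollary after it. The case $r>\ell$, however, has a genuine gap. The monotonicity $\sup W(P_r)\le\sup W(P'_r)$ you want to import is false, and it is not what \cite[Proposition~53]{mSchurt0} provides.

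Your own observation points the other way. $P'_r$ is obtained from $P_r$ by replacing path entries with smaller ones, and Knuth equivalence only controls $P'_r$ together with the letter that leaves it.

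Here is a concrete failure. Take $\lambda=(3,2,2)$, let $P$ and $Q$ be the one-row tableaux $1\,3$ and $1\,2$, and insert $\binom{3}{2}$.
\begin{itemize}
\item Then $P'$ has rows $1\,2$ and $3$, the new cell lies in column $\ell=1$, and $\sup W(P_2)=3>2=\sup W(P'_2)$.
\item Since $\mathcal C_2^\lambda(Q)=\mathcal C_2^\lambda(Q')=\lambda[2]=2$, column $2$ violates the bound for $(P,Q)$ but satisfies it for $(P',Q')$.
\item The pair $(P',Q')$ is inadmissible only because column $1$ fails: $\mathcal C_1(P')=32\not\le 22=\mathcal C_1^\lambda(Q')$.
\end{itemize}
So for $r>\ell$ the violation cannot, in general, be kept in column $r$.

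\paragraph{The missing idea.} For $r>\ell$ you must transport the violation to column $\ell$; this is the content of the paper's case $r>\ell$.

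Take $w=w_s\cdots w_1\in W(P_r)$ with $(w)_s>(\mathcal C_r^\lambda(Q))_s$. If $w$ avoids the cells of the insertion path, then $w\in W(P'_r)$, and column $r$ still fails because $\mathcal C_r^\lambda(Q')=\mathcal C_r^\lambda(Q)$.

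Otherwise, let $w_t=c_{p-1}$ be the topmost letter of $w$ on the path, and put $u=c_{k-1}\cdots c_p\,w_t\cdots w_1$. In $P'$ the letters $c_{k-1},\dots,c_{p-1}$ occupy rows $k,\dots,p$ in columns $\ge\ell$, and $w_{t-1},\dots,w_1$ sit untouched above them. Hence $u\in W(P'_\ell)$ is decreasing, has length $k-p+t$, and ends with $(w)_s$. Because $K_+(Q)$ is a key, $\mathcal C_r^\lambda(Q)$ is a subword of $\mathcal C_\ell^\lambda(Q)$. Comparing positions then turns the violation of $w$ into a violation of $\sup W(P'_\ell)$ against $\mathcal C_\ell^\lambda(Q')$.

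\paragraph{Getting the count right.} Let $q$ be the length of column $r$. The letters $w_{t+1},\dots,w_s$ lie in rows $p,\dots,q$ of $P$, so only $s-t\le q-p+1$ is available. The comparison should therefore be made at position $x=k-1-q+s$, which satisfies $x\le|u|$. It uses that $\mathcal C_\ell^\lambda(Q)$ has length $k-1$ and that $(\mathcal C_r^\lambda(Q))_s\ge(\mathcal C_\ell^\lambda(Q))_x=(\mathcal C_\ell^\lambda(Q'))_x$.

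In the example above ($s=t=q=1$, $p=k=2$), the position $k-q+s$ used in the paper's write-up exceeds $|u|$. Position $x=1$ does work and gives $3>2$.
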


\begin{proof}
Throughout this proof we use two indexing conventions for decreasing
words.

If $w=a_1\cdots a_s$, we write $(w)_t=a_t$ for the $t$-th
letter of $w$ from left to right.

In arguments involving insertion paths, we instead label the same word as $w=w_s\cdots w_1$, following the row-oriented convention used for the insertion path.
Thus
\[
(w)_t=w_{s+1-t},
\qquad 1\leq t\leq s,
\]
and in particular $(w)_s=w_1$.

Throughout the proof, let the cell in $Q'/Q$ lie in row $k$ and column $\ell$, and suppose that $\sup W(P_r)\not\leq\mathcal C_r^\lambda(Q)$ for some $r$.
We consider separately the three possible positions of $r$ relative to $\ell$.

\smallskip
\noindent
{\it The case $r<\ell$.}
Lemma~\ref{LemaDesfasaje} gives $\sup W(P_r)\leq \sup W(P_r')$.
Hence $\sup W(P_r')\not\leq \mathcal{C}_r^\lambda(Q') $ since otherwise Corollary~\ref{DesigualdadQlambda} would give
\[
\sup W(P_r)
\leq \sup W(P_r')
\leq \mathcal{C}_r^\lambda(Q')
\leq \mathcal{C}_r^\lambda(Q),
\]
a contradiction.

\smallskip
\noindent
{\it The case $r=\ell$.}
As before, $w(P_\ell')$ is Knuth equivalent to $w(P_\ell)j$, and therefore $\sup W(P_\ell)\leq \sup W(P_\ell')$ by Lemma~\ref{LemaDesfasaje}.
If we suppose that $\sup W(P_\ell')\leq \mathcal C_\ell^\lambda(Q')$, then we get from Corollary~\ref{DesigualdadQlambda} that
\[
\sup W(P_\ell)
\leq
\sup W(P_\ell')
\leq
\mathcal C_\ell^\lambda(Q')
=
\mathcal C_\ell^\lambda(Q)\,\lambda[i].
\]

The extra letter $\lambda[i]$ cannot repair the inequality $\sup W(P_\ell)\not\leq \mathcal C_\ell^\lambda(Q)$ since both words have the same length and $\lambda[i]$ is appended at the end.

Consequently, $\sup W(P_\ell')\not\leq
\mathcal C_\ell^\lambda(Q')$, as desired.

\smallskip
\noindent
{\it The case $r>\ell$.}
Since $\sup W(P_r)\not\leq \mathcal{C}_r^\lambda(Q)$, by the definition of the supremum, after truncating after the first $s$ letters if necessary, we may choose a decreasing word $
w=w_s\cdots w_1\in W(P_r)$ of length $s$ such that
$$
(w)_s=w_1=\bigl(\sup W(P_r)\bigr)_s 
>
\bigl(\mathcal{C}_r^\lambda(Q)\bigr)_s.
$$
Suppose first that $w$ does not intersect the insertion path.
Then $w\in W(P_r')$, and therefore
\[
\bigl(\sup W(P_r')\bigr)_s
\geq (w)_s
=
\bigl(\sup W(P_r)\bigr)_s
>
\bigl(\mathcal{C}_r^\lambda(Q)\bigr)_s
=
\bigl(\mathcal{C}_r^\lambda(Q')\bigr)_s
\]
by Corollary~\ref{DesigualdadQlambda}.
Hence $ \sup W(P_r')\not\leq\mathcal{C}_r^\lambda(Q') $ in that case.

Suppose now, on the contrary, that $w$ intersects the insertion path $c_{k-1} \ldots c_{1}$ in $P$.
Let $t$ be the smallest integer such that $w_{t}$  intersects the insertion path, and suppose that $w_t=c_{p-1}$.
Observe that the entry $w_t$ lies in row  $p-1$  in  $P$ while it lies in row $p$ in $P'$.
    
We consider the word $u=c_{k-1} \cdots c_p w_t \cdots w_1$ of length  $(k-p)+t$.
The word $u$ is decreasing since $c_{k-1} \cdots c_1$ is decreasing, $w$ is decreasing and $c_p > c_{p-1} =w_t$.
We also have that  $u \in W(P_\ell')$ since $w_t$ lies in the row above that of $c_p$ in $P'$ and since $w_{t-1},\dots, w_1$ belong to $P'$ given that they do not intersect the insertion path.

Suppose that $P_r$ has $q$ rows.
In rows $p+1,\ldots,q$ of $P'$ lie the decreasing words  $c_{q-1} \ldots c_p$ and $w_{s} \dots w_{t+1}$.
Since the former has a letter in each of those rows, we have that $ s-t \leq q-p $, or equivalently, that $k-q+s \leq k-p+t$.
Since $u$ is a decreasing word, we thus have that $ (u)_{k-q+s} \geq (u)_{k-p+t} =w_{1}=(w)_s$.
Because $u \in  W(P_\ell')$,  this means that   $(\sup  W(P_\ell') )_{k-q+s} \geq  (u)_{k-q+s} \geq (w)_s$.
Hence, Corollary~\ref{DesigualdadQlambda} yields that
$$
(\sup  W(P_\ell') )_{k-q+s}   \geq (w)_{s} >
 ( {\mathcal C}_r^{\lambda}(Q))_s   =  ( {\mathcal C}_r^{\lambda}(Q'))_s \geq  ( {\mathcal C}_\ell^{\lambda}(Q'))_{k-q+s},
$$
which implies that
$\sup W(P_\ell') \not \leq  {\mathcal C}_\ell^{\lambda}(Q')$.
Note that $ ( {\mathcal C}_r^{\lambda}(Q'))_s \geq  ( {\mathcal C}_\ell^{\lambda}(Q'))_{k-q+s}$
since ${\mathcal C}_r^{\lambda}(Q')$ is a subword of $ {\mathcal C}_\ell^{\lambda}(Q')$
and since the difference in length between the two words is $k-q$ (as
$\left|\mathcal C_\ell^\lambda(Q')\right|=k$ and $\left|\mathcal C_r^\lambda(Q')\right|=q$).
\end{proof}

\begin{example}
Let
\[
\lambda=(4,3,1,1),
\]
and consider
\[
(P,Q)
=
\left(
\tableau[scY]{
2 & 3\\
3
},
\tableau[scY]{
1 & 1\\
3
}
\right),
\qquad
(i,j)=(4,1).
\]
We have
\[
\mathcal C_1(Q)=31,
\qquad
\mathcal C_1^\lambda(Q)=41.
\]
On the other hand,
\[
32\in W(P_1)
\qquad\text{and}\qquad
32\not\leq41.
\]
Thus, $(P,Q)$ is not $\lambda$-admissible.

The biletter $\binom{4}{1}$ is $\lambda$-admissible, since
\[
1\leq\lambda[4]=1.
\]
Its insertion gives
\[
(P',Q')
=
\left(
\tableau[scY]{
{\color{blue}1} & 3\\
{\color{blue}2}\\
{\color{blue}3}
},
\tableau[scY]{
1 & 1\\
3\\
4
}
\right).
\]
Here the insertion path is highlighted in blue. We now have
\[
\mathcal C_1(Q')=431,
\qquad
\mathcal C_1^\lambda(Q')=411.
\]
The word $32$ is still present in $W(P_1')$, as highlighted in red in
\[
\tableau[scY]{
1 & 3\\
{\color{red}2}\\
{\color{red}3}
}.
\]
Since
\[
32\not\leq411,
\]
the pair $(P',Q')$ is not $\lambda$-admissible.
This illustrates the case $r=\ell=1$ in the proof of Proposition~\ref{PropReciprocalambda}.
\end{example}

\begin{example}
Let
\[
\lambda=(4,1,1,1),
\]
and consider
\[
(P,Q)
=
\left(
\tableau[scY]{
1 & 1\\
2
},
\tableau[scY]{
1 & 2\\
3
}
\right),
\qquad
(i,j)=(4,1).
\]
In this case,
\[
\mathcal C_1(Q)=32,
\qquad
\mathcal C_1^\lambda(Q)=11.
\]
However,
\[
21\in W(P_1)
\qquad\text{and}\qquad
21\not\leq11.
\]
Hence $(P,Q)$ is not $\lambda$-admissible.

Again, the biletter $\binom{4}{1}$ is $\lambda$-admissible because
\[
1\leq\lambda[4]=1.
\]
Its insertion gives
\[
(P',Q')
=
\left(
\tableau[scY]{
1 & 1 & {\color{blue}1}\\
2
},
\tableau[scY]{
1 & 2 & 4\\
3
}
\right).
\]
The new cell lies in column $\ell=3$, while the failure of admissibility occurs for $r=1<\ell$.
We have
\[
\mathcal C_1(Q')=42,
\qquad
\mathcal C_1^\lambda(Q')=11.
\]
Thus the repeated parts
\[
\lambda[2]=\lambda[3]=\lambda[4]=1
\]
cause the transformed bound to remain unchanged.
Moreover, the word $21$ is still present in $W(P_1')$, as highlighted in red in
\[
\tableau[scY]{
{\color{red}1} & 1 & 1\\
{\color{red}2}
}.
\]
Therefore,
\[
21\not\leq11,
\]
and $(P',Q')$ remains non-$\lambda$-admissible.
\end{example}

We finally need the $\lambda$-dependent analogue of \cite[Proposition~56]{mSchurt0}.
\begin{proposition}
  \label{LimiteBuenoYMalolambda}
 Let $(P',Q')=   (P,Q) \leftarrow \binom{i}{j}$ be RSK-compatible.
 If  the biletter $\binom{i}{j}$ is not $\lambda$-admissible then neither is the pair $(P',Q')$.
\end{proposition}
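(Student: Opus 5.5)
The biletter $\binom{i}{j}$ is not $\lambda$-admissible, so $j>\lambda[i]$. I will exhibit a decreasing subword of $P'_\ell$ that violates the third criterion of Proposition~\ref{propcriterialambda} in column $\ell$. Here the new cell lies in row $k$ and column $\ell$. The relevant bound is the case $r=\ell$ of Corollary~\ref{DesigualdadQlambda}:
\[
\mathcal C_\ell^\lambda(Q')=\mathcal C_\ell^\lambda(Q)\,\lambda[i],
\]
which has length $k$. Its last letter is $\lambda[i]$, and every other letter is at least $\lambda[i]$, since all entries of $Q'$ are at most $i$ and $\lambda$ is weakly decreasing.

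Let $c_{k-1}>\cdots>c_1>c_0=j$ be the insertion path of $P\leftarrow j$. The bumping rule makes this sequence strictly decreasing. In $P'$, the letter $c_s$ sits in row $s+1$, weakly to the right of column $\ell$, because insertion paths move weakly left as they go down and end in column $\ell$. In particular $c_{k-1}$ lies exactly in column $\ell$, and every $c_s$ lies in $P'_\ell$.

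The reading word reads rows from bottom to top. So the word $u=c_{k-1}c_{k-2}\cdots c_1c_0$, which takes one letter from each of rows $k,k-1,\ldots,1$, is a decreasing subword of $w(P'_\ell)$, so $u\in W(P'_\ell)$. It has length $k$ and last letter $(u)_k=j$. Hence
\[
\bigl(\sup W(P'_\ell)\bigr)_k\geq (u)_k=j>\lambda[i]=\bigl(\mathcal C_\ell^\lambda(Q')\bigr)_k .
\]
This gives $\sup W(P'_\ell)\not\leq\mathcal C_\ell^\lambda(Q')$, so $(P',Q')$ is not $\lambda$-admissible by Proposition~\ref{propcriterialambda}.

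The only delicate point is checking that $u$ really is a subword of the reading word of $P'_\ell$ in the right order. This needs two facts. First, each path entry in $P'$ is in column $\ge\ell$, which follows from the standard fact that the column of the path entry in row $s+1$ weakly decreases with $s$. Second, rows are read from bottom to top, so $c_{k-1}$ (row $k$) precedes $c_{k-2}$ (row $k-1$), and so on. I expect no real obstacle beyond stating these facts about row insertion carefully.
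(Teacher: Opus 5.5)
Your proposal is correct and matches the paper's proof: the paper also uses the insertion-path word $c_{k-1}\cdots c_1 j\in W(P'_\ell)$ of length $k$ and compares its last letter $j$ with $\bigl(\mathcal C_\ell^\lambda(Q')\bigr)_k=\lambda[i]$ from the case $r=\ell$ of Corollary~\ref{DesigualdadQlambda}. Your checks that the path entries lie weakly right of column $\ell$ and are read in strictly decreasing order fill in details the paper states without comment.
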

\begin{proof}
Let $c_{k-1},\ldots,c_1,c_0(=j)$ be the insertion path, where $\ell$ is the column of the cell in $Q'/Q$ and $k$ is the length of column $\ell$ of $Q'$.
The word $c_{k-1}\cdots c_{1}j$ is a decreasing word of length $k$ in $W(P_\ell')$.
Since $\binom{i}{j}$ is not $\lambda$-admissible, we have $j > \lambda [i]$.
Hence, $(P',Q')$ is not $\lambda$-admissible since $(\sup W(P_\ell') )_{k} \geq j > \lambda [i] = ( {\mathcal C}_\ell^{\lambda}(Q'))_k$ by Corollary~\ref{DesigualdadQlambda}.
\end{proof}

\subsection{The RSK correspondence and its consequences}
\label{sec:RSKconsequences}

Having established \eqref{equivalencelambda} one insertion at a time, we can now iterate the result and obtain the desired consequences.

The RSK correspondence provides a bijection between the set of all biwords with top rows in $\{1,\dots,n\}$ and bottom rows in $\{1,\dots,m\}$ and pairs of tableaux $(P,Q)$ of the same shape, where $Q$ has entries in $\{1,\dots,n\}$ and $P$ has entries in $\{1,\dots,m\}$. More specifically, given a biword
\[
\mathbf b=
\left(
\begin{array}{cccc}
i_1 & i_2 & \cdots & i_r \\
j_1 & j_2 & \cdots & j_r
\end{array}
\right)
\]
in lexicographic order, one constructs successively a pair
$(P,Q)=(P^{(r)},Q^{(r)})$ from the empty pair by applying
\[
(P^{(s)},Q^{(s)})
=
(P^{(s-1)},Q^{(s-1)})\leftarrow
\binom{i_s}{j_s}.
\]
The lexicographic ordering guarantees that each of these insertions is
RSK-compatible.

The preceding results can now be assembled into the main theorem.

\begin{proof}[Proof of Theorem~\ref{TeoremaKeysLambda}.]
\label{ProofTheoremlambda}
Let $(P',Q')=   (P,Q) \leftarrow \binom{i}{j}$ be RSK-compatible.
As mentioned earlier,  Propositions~\ref{PropInsPQlambda}, \ref{PropReciprocalambda} and \ref{LimiteBuenoYMalolambda} tell us that
\[
\left[
(P,Q)\text{ and }\binom{i}{j}
\text{ are both $\lambda$-admissible}
\right]
\iff
(P',Q')\text{ is $\lambda$-admissible}.
\]

Starting from a biword $\mathbf b\in \mathcal{B}_\lambda$ in lexicographic order and applying the RSK insertion one biletter at a time, the forward implication shows inductively that every intermediate pair is $\lambda$-admissible.
Hence the resulting pair belongs to $\mathcal{G}_\lambda$.
Conversely, let $(P,Q)\in \mathcal{G}_\lambda$.
Applying the inverse RSK correspondence recovers the biletters of the biword one at a time.
The reverse implication above shows inductively that each recovered biletter is $\lambda$-admissible.
Thus the recovered biword belongs to $\mathcal{B}_\lambda$.

Since RSK is a bijection before imposing the admissibility conditions,
these two implications show that its restriction gives the claimed bijection.
\end{proof}

\begin{corollary}
\label{CorKernellambda}
Let $\lambda=(\lambda_1,\ldots,\lambda_n)$ be a partition and set $m=\lambda_1$.
Then
\begin{equation}
\label{EqKernellambda}
\prod_{(i,j)\in\lambda}\frac{1}{1-x_i y_j}
=
\sum_{\alpha \in \operatorname{Comp}(\lambda)}
\hat K_\alpha(x)K_{\alpha^\lambda}(y)
=
\sum_{\begin{smallmatrix}
      \alpha\in \operatorname{Comp}(\lambda), \:\beta\in\mathbb{N}^m\\
      \beta^+=\alpha^+, \;
      K(\beta)\leq K(\alpha^\lambda)
      \end{smallmatrix}
      }
\hat K_\alpha(x)\hat K_\beta(y).
\end{equation}
\end{corollary}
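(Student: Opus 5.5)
The plan is to obtain the identity by taking weighted generating functions on both sides of the bijection of Theorem~\ref{TeoremaKeysLambda}.

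First, the left-hand side. Expanding each factor $1/(1-x_iy_j)$ as a geometric series shows that the kernel is the sum, over all $\mathbb N$-matrices $A$ supported on $\lambda$, of $\prod_{(i,j)}(x_iy_j)^{a_{ij}}$. Such matrices correspond bijectively to the lexicographically ordered biwords in $\mathcal B_\lambda$: a biword lists each biletter $\binom{i}{j}$ exactly $a_{ij}$ times, and the support condition $(i,j)\in\lambda$ is exactly $j\leq\lambda[i]$. Under this correspondence the monomial is $x^{\text{top}}y^{\text{bottom}}$.

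Second, we transport this sum through RSK. RSK sends the top-row content to $\operatorname{wt}(Q)$ and the bottom-row content to $\operatorname{wt}(P)$. Theorem~\ref{TeoremaKeysLambda} therefore gives
\[
\prod_{(i,j)\in\lambda}\frac{1}{1-x_iy_j}=\sum_{(P,Q)\in\mathcal G_\lambda}x^{\operatorname{wt}(Q)}y^{\operatorname{wt}(P)}.
\]
Two points need checking when splitting this sum over $\alpha$.

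The first is that the union in \eqref{eqGlambda} is disjoint. The sets $\mathcal S(\alpha)$ are disjoint for distinct $\alpha$, since $K_+(Q)=K(\alpha)$ determines $\alpha$ as the content of a key tableau.

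The second is that the shapes of $P$ and $Q$ agree. Both $\mathcal S(\alpha)$ and $\mathcal S^\lambda(\alpha)$ consist of tableaux of shape $\alpha^+$, so this is built into the definitions.

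Hence the sum factors as $\sum_{\alpha\in\operatorname{Comp}(\lambda)}\bigl(\sum_{Q\in\mathcal S(\alpha)}x^{\operatorname{wt}(Q)}\bigr)\bigl(\sum_{P\in\mathcal S^\lambda(\alpha)}y^{\operatorname{wt}(P)}\bigr)$. The first factor is $\hat K_\alpha(x_1,\ldots,x_n)$ by \eqref{EqAtomTableau}. The second is $K_{\alpha^\lambda}(y_1,\ldots,y_m)$ by Corollary~\ref{CorGeneratingSlambda}. This gives the first equality.

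Third, the second equality follows by expanding each key polynomial $K_{\alpha^\lambda}(y)$ into atoms via \eqref{EqKeyIntoAtoms}. This produces a sum over $\beta\in\mathbb N^m$ with $\beta^+=(\alpha^\lambda)^+$ and $K(\beta)\leq K(\alpha^\lambda)$. By construction $K(\alpha^\lambda)$ has shape $\alpha^+$, so $(\alpha^\lambda)^+=\alpha^+$ and the index set matches the one stated.

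The only real subtlety is bookkeeping: tracking which alphabet carries which variables, with $Q$ in $[n]$ paired with $x$ and $P$ in $[m]$ paired with $y$. One must also confirm that the formal power series identity is legitimate as a sum of nonnegative monomials. This holds because each monomial arises from finitely many biwords. Everything else has already been done in Theorem~\ref{TeoremaKeysLambda}.
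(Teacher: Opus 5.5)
Your proposal is correct and follows essentially the same route as the paper: take generating functions across the weight-preserving bijection of Theorem~\ref{TeoremaKeysLambda}, split $\mathcal G_\lambda$ over $\alpha\in\operatorname{Comp}(\lambda)$, apply \eqref{EqAtomTableau} and Corollary~\ref{CorGeneratingSlambda}, and then expand via \eqref{EqKeyIntoAtoms}. Your explicit checks that the union is disjoint and that $(\alpha^\lambda)^+=\alpha^+$ are details the paper leaves implicit.
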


\begin{proof}
By Theorem~\ref{TeoremaKeysLambda}, RSK restricts to a weight-preserving bijection between $\mathcal{B}_{\lambda}$ and $\mathcal{G}_{\lambda}$.
More precisely, if a biword $w$ has top and bottom weights $\pmb a$ and $\pmb b$, respectively, then its image $(P,Q)$ satisfies
$$
\operatorname{wt}(P)=\pmb b
\qquad\text{and}\qquad
\operatorname{wt}(Q)=\pmb a.
$$
Thus, both the biword $w$ and its image contribute the monomial $x^{\pmb a}y^{\pmb b}$.
$$
\sum_{w \in \mathcal{B}_{\lambda}} x^{\operatorname{wt}_{\operatorname{top}} (w)} y^{\operatorname{wt}_{\operatorname{bottom}} (w)}
=
\prod_{(i,j)\in\lambda}\frac{1}{1-x_i y_j}.
$$
On the other hand, using
$$
\mathcal{G}_\lambda=\bigcup_{\alpha \in \operatorname{Comp}(\lambda)}
\bigl(\mathcal{S}^\lambda(\alpha)\times \mathcal{S}(\alpha)\bigr),
$$
its generating function is
\begin{align*}
\sum_{(P,Q)\in \mathcal{G}_\lambda}
x^{\operatorname{wt}(Q)}y^{\operatorname{wt}(P)}
&=
\sum_{\alpha \in \operatorname{Comp}(\lambda)}
\left(\sum_{Q\in \mathcal{S}(\alpha)}x^{\operatorname{wt}(Q)}\right)
\left(\sum_{P\in \mathcal{S}^\lambda(\alpha)}y^{\operatorname{wt}(P)}\right)
=
\sum_{\alpha \in \operatorname{Comp}(\lambda)}\hat K_{\alpha}(x)K_{\alpha^\lambda}(y).
\end{align*}
Here, \eqref{EqAtomTableau} and Corollary~\ref{CorGeneratingSlambda} were used in the second equality.
Finally, expanding $K_{\alpha^\lambda}(y)$ by \eqref{EqKeyIntoAtoms} gives the last expression in \eqref{EqKernellambda}.
\end{proof}

\section{An admissibility criterion and a direct construction of $\alpha^\lambda$}
\label{SecDirectAlphaLambda}

The definition of $\alpha^\lambda$ uses the strictification of a tableau bound.
Here we give an existence criterion and a construction that can both be read directly from $\alpha$ and $\lambda$.
The criterion depends only on the support of $\alpha$; the construction places its nonzero parts by a parking procedure.

\begin{proposition}[Existence criterion for $\alpha^\lambda$]
\label{PropExistenceAlphaLambda}
Let $\lambda=(\lambda_1,\ldots,\lambda_n)$ be a partition, set
$m=\lambda_1$, and let $\alpha\in\mathbb N^n$.
If $\alpha=0^n$, then $\alpha^\lambda=0^m$.
Otherwise, write
\[
\operatorname{supp}(\alpha)=\{i_1<\cdots<i_c\}.
\]
Then $\alpha^\lambda$ exists if and only if
\[
\lambda[i_j]\geq c-j+1\qquad(1\leq j\leq c).
\]
Equivalently,
\[
\min_{1\leq j\leq c}\{\lambda[i_j]-(c-j)\}\geq1.
\]
\end{proposition}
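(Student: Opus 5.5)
By Definition~\ref{DefKeylambda} and the remark following Proposition~\ref{PropStrInclusion}, $\alpha^\lambda$ exists if and only if the strictification of the \emph{first} column of $B_+^\lambda(\alpha)$ has only positive entries. The plan is to compute that column explicitly and read off when its strictification stays positive. The case $\alpha=0^n$ is immediate: the shape $\alpha^+$ is empty, the condition is vacuous, and $K(\alpha^\lambda)$ is the empty key tableau, so $\alpha^\lambda=0^m$.

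For $\alpha\neq0$, first identify the first column of $K(\alpha)$. Since $K(\alpha)$ is the key tableau with content $\alpha$, the letter $i$ occurs in exactly the first $\alpha_i$ columns. Hence the first column consists of exactly the letters $i$ with $\alpha_i>0$, namely $i_1<\cdots<i_c$, and $c=\ell(\alpha^+)$. Applying the $\lambda$-operation of Remark~\ref{remarklambda}, this column of $B_+^\lambda(\alpha)$, read bottom to top, is the weakly decreasing word
\[
c_1\cdots c_c=\lambda[i_1]\,\lambda[i_2]\cdots\lambda[i_c],
\]
because $\lambda$ is weakly decreasing and $i_1<\cdots<i_c$.

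Next solve the strictification recursion $d_1=c_1$, $d_j=\min\{d_{j-1}-1,c_j\}$ in closed form. Unrolling it gives
\[
d_j=\min_{1\le t\le j}\{c_t-(j-t)\},
\]
which follows by a one-line induction on $j$. The sequence $d$ is strictly decreasing, so all its entries are positive if and only if $d_c\ge1$. This is equivalent to $c_t-(c-t)\ge1$ for every $t$, that is, $\lambda[i_t]\ge c-t+1$ for $1\le t\le c$. This is exactly the stated criterion and its $\min$ reformulation.

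No step here is a real obstacle. The only point needing care is that positivity of the whole strictified filling reduces to the first column, and that is supplied by Proposition~\ref{PropStrInclusion} together with the nesting of the columns of $B_+^\lambda(\alpha)$ (Remark~\ref{AlmostKey}). The other point to get right is that the reading direction puts $\lambda[i_1]$ first in the bottom-to-top word, so the index $c-j$ in the criterion matches $c-t$ in the closed form above.
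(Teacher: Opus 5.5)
Your proof is correct and follows the paper's argument essentially step for step. You reduce to the first column via the nesting of the strictified columns, identify that column of $B_+^\lambda(\alpha)$ read bottom to top as $\lambda[i_1]\geq\cdots\geq\lambda[i_c]$, and unroll the strictification recursion to obtain $d_c=\min_{1\leq j\leq c}\{\lambda[i_j]-(c-j)\}$, whose positivity is the stated criterion. Your explicit closed form for every $d_j$ and your check of the reading direction only add detail to what the paper does.
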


\begin{proof}
The zero case follows from Definition~\ref{DefKeylambda}.
Suppose that $\alpha\neq0$.
Since the strictified columns of $B_+^\lambda(\alpha)$ are nested,
$\alpha^\lambda$ exists precisely when the first strictified column
has only positive entries.

The first column of $K(\alpha)$ consists of
$i_1<\cdots<i_c$. Thus the first column of $B_+^\lambda(\alpha)$,
read from bottom to top, is
\[
\lambda[i_1]\geq\cdots\geq\lambda[i_c].
\]
Write its strictification in the same order as $d_1>\cdots>d_c$.
The defining recurrence gives
\[
d_1=\lambda[i_1],\qquad
d_j=\min\{\lambda[i_j],d_{j-1}-1\}\quad(2\leq j\leq c),
\]
and hence
\[
d_c=\min_{1\leq j\leq c}\{\lambda[i_j]-(c-j)\}.
\]
All entries are positive if and only if the smallest one, $d_c$,
is positive. This is exactly the stated criterion.
\end{proof}

\begin{corollary}
\label{CorDistinctParts}
If $\lambda=(\lambda_1,\ldots,\lambda_n)$ has distinct positive parts,
then $\operatorname{Comp}(\lambda)=\mathbb N^n$.
\end{corollary}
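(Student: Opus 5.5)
The plan is to apply Proposition~\ref{PropExistenceAlphaLambda} directly. If $\alpha=0^n$ there is nothing to prove, since $\alpha^\lambda=0^m$ exists by definition. Otherwise, write $\operatorname{supp}(\alpha)=\{i_1<\cdots<i_c\}$ as in the criterion. It then suffices to verify
\[
\lambda[i_j]\geq c-j+1\qquad(1\leq j\leq c).
\]

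The key step is a counting bound on the sizes of the parts. The parts $\lambda_1>\lambda_2>\cdots>\lambda_n$ are distinct and positive integers, so a downward induction from the last index gives $\lambda[i]\geq n-i+1$ for every $1\leq i\leq n$. Indeed, $\lambda_n\geq1$, and each earlier part exceeds the next one by at least one.

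Next we compare indices. The indices $i_j<i_{j+1}<\cdots<i_c$ are $c-j+1$ distinct elements of $\{i_j,\ldots,n\}$, so $n-i_j+1\geq c-j+1$. Combining this with the previous bound gives
\[
\lambda[i_j]\geq n-i_j+1\geq c-j+1.
\]
Hence the criterion holds for every $\alpha\in\mathbb N^n$, and therefore $\operatorname{Comp}(\lambda)=\mathbb N^n$.

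There is no real obstacle here. The only point needing care is that the hypothesis must include positivity of all $n$ parts, so that $\lambda_n\geq1$. Without it, a zero part at an index in the support would break the base of the induction.
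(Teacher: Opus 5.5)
Your proof is correct and matches the paper's own argument: both reduce to Proposition~\ref{PropExistenceAlphaLambda} and chain $\lambda[i_j]\geq n-i_j+1\geq c-j+1$. As in the paper, the zero composition is handled separately by definition.
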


\begin{proof}
For a nonzero composition, write
$\operatorname{supp}(\alpha)=\{i_1<\cdots<i_c\}$.
Distinctness of the parts gives $\lambda[i]\geq n-i+1$, while
$i_j\leq n-c+j$. Therefore
\[
\lambda[i_j]\geq n-i_j+1\geq c-j+1,
\]
so Proposition~\ref{PropExistenceAlphaLambda} applies.
The zero composition belongs to $\operatorname{Comp}(\lambda)$
by definition.
\end{proof}

\begin{example}
For $\lambda=(4,3,1,1)$ and $\alpha=(1,2,3,1)$, the support of
$\alpha$ is $\{1,2,3,4\}$.
The criterion fails at $j=3$, since
\[
\lambda[3]=1<2=4-3+1.
\]
Thus $\alpha^\lambda$ does not exist.
The same conclusion holds for every composition with this support. More generally, if $\alpha^\lambda$ exists, then $|\operatorname{supp}(\alpha)|$ is at most the number of rows of the largest staircase contained in $\lambda$.
\end{example}

We next construct $\alpha^\lambda$ by placing the nonzero parts of $\alpha$ into positions $1,\ldots,m$.
Each part starts at the end of its corresponding row of $\lambda$ and moves left until it reaches an unoccupied position.

\begin{proposition}[Parking construction of $\alpha^\lambda$]
\label{Parkingalpha}
Let $\lambda=(\lambda_1,\ldots,\lambda_n)$ be a partition, set $m=\lambda_1$, and let $\alpha\in\mathbb N^n$.
For $\alpha=0^n$, the procedure returns $0^m$.
Otherwise, order the support as $i_1,\ldots,i_c$ so that
\[
\alpha_{i_1}\geq\cdots\geq\alpha_{i_c}>0,
\]
breaking ties arbitrarily, and process the indices in this order.
At step $t$, choose
\[
q_t=\max\left(
\{1,\ldots,\lambda[i_t]\}\setminus\{q_1,\ldots,q_{t-1}\}
\right).
\]
If the set is empty, stop and declare failure.
Otherwise, place the part $\alpha_{i_t}$ in position $q_t$.
If all steps succeed, let $\beta\in\mathbb N^m$ be the composition given by
\[
\beta_{q_t}=\alpha_{i_t}\quad(1\leq t\leq c),
\qquad
\beta_j=0\quad\text{at every unoccupied position}.
\]
The procedure succeeds if and only if $\alpha^\lambda$ exists.
When it succeeds, $\beta=\alpha^\lambda$; in particular, the output is independent of the order chosen among equal parts of $\alpha$.
\end{proposition}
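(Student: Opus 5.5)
We compare the key tableau $K(\beta)$ produced by parking with $\operatorname{str}(B_+^\lambda(\alpha))$ column by column. The plan rests on the occupied-position model from the proof of Proposition~\ref{PropStrInclusion}: for a weakly decreasing word $u$, $\operatorname{str}(u)$ is the set of positions obtained when each letter $x$ of $u$ occupies $p_S(x)$, the largest free position $\le x$. Strictification is positive exactly when no letter is pushed to a position $\le 0$. By that proposition the final occupied set is independent of the processing order.

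\textbf{Step 1: columns of $K(\alpha)$.} Column $r$ of $K(\alpha)$ is the set $I_r=\{i:\alpha_i\geq r\}$. In the ordering used by the procedure, this is exactly an initial segment $\{i_1,\ldots,i_{c_r}\}$ with $c_r=|I_r|$, since that ordering lists indices by weakly decreasing part. Column $r$ of $B_+^\lambda(\alpha)$ is the multiset $\{\lambda[i]:i\in I_r\}$, so $\operatorname{str}$ of it is the occupied set obtained by parking the letters $\lambda[i_1],\ldots,\lambda[i_{c_r}]$.

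\textbf{Step 2: comparison with parking.} Processing the letters in the order $t=1,\ldots,c_r$ (allowed by order independence), the parking rule $q_t=\max(\{1,\ldots,\lambda[i_t]\}\setminus\{q_1,\ldots,q_{t-1}\})$ coincides with $p_S(\lambda[i_t])$, except that failure is declared instead of occupying a nonpositive position. Hence column $r$ of $\operatorname{str}(B_+^\lambda(\alpha))$ equals $\{q_1,\ldots,q_{c_r}\}$ whenever parking succeeds on the first $c_r$ indices. Taking $r=1$, parking succeeds if and only if the first strictified column is positive, which by Definition~\ref{DefKeylambda} and the nestedness remark holds if and only if $\alpha^\lambda$ exists.

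\textbf{Step 3: identify the content.} When parking succeeds, $\operatorname{str}(B_+^\lambda(\alpha))$ has columns $\{q_1,\ldots,q_{c_r}\}$. The number of columns containing $q_t$ is $\#\{r: c_r\ge t\}=\#\{r:\alpha_{i_t}\ge r\}=\alpha_{i_t}$, and unoccupied positions never appear. The content of this key tableau is therefore $\beta$, so $K(\beta)=K(\alpha^\lambda)$ and $\beta=\alpha^\lambda$. Independence of tie-breaking follows because $\alpha^\lambda$ does not depend on it. It can also be seen directly: swapping two indices with equal parts changes the occupied sets only through order, which is irrelevant by the commutation argument.

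The main obstacle is Step 2 when $\lambda$ has repeated parts. There the positions assigned to individual letters genuinely depend on the order, and only the occupied set is invariant. This is why we must use the fact that each column's index set is an initial segment of a single fixed ordering, so that one parking run simultaneously realizes all columns. A single run is needed to make the position $q_t$ of each index consistent across columns and to read off the content; this is what Step 3 relies on.
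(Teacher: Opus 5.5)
Your proposal is correct and follows essentially the same route as the paper. Both arguments use the unrestricted occupied-position model from Proposition~\ref{PropStrInclusion} with its order independence. Both observe that each column index set $I_r=\{i:\alpha_i\geq r\}$ is an initial segment $\{i_1,\ldots,i_{c_r}\}$ of the processing order, so a single parking run realizes every strictified column at once, and then take $r=1$ to get the existence criterion. Your content count in Step 3 is just a restatement of the paper's level-set identity $\{j:\beta_j\geq r\}=\{q_t:1\leq t\leq c_r\}$, which identifies column $r$ of $K(\beta)$ with column $r$ of $\operatorname{str}(B_+^\lambda(\alpha))$.
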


\begin{proof}
The zero case is immediate, so assume $\alpha\neq0$.
Temporarily allow parking at all integer positions, and write $\widetilde q_t$ for the position chosen at step $t$.
This extended procedure always succeeds and agrees with the positive procedure until the latter fails.
As shown in the proof of Proposition~\ref{PropStrInclusion}, its occupied set depends only on the multiset of bounds.
For weakly decreasing bounds, that set is precisely their strictification.

To recover the individual columns, set
\[
I_r(\alpha)=\{i:\alpha_i\geq r\},\qquad c_r=|I_r(\alpha)|.
\]
The ordering by decreasing parts ensures that
$I_r(\alpha)=\{i_1,\ldots,i_{c_r}\}$.
Column $r$ of $K(\alpha)$ has exactly these entries, so the corresponding bounds are exactly those processed in the first $c_r$
steps.
Identifying a strict column with its set of entries, we obtain
\begin{equation}
\label{EqParkingColumns}
\operatorname{str}\bigl(B_+^\lambda(\alpha)_r\bigr)
=\{\widetilde q_t:1\leq t\leq c_r\}.
\end{equation}

For $r=1$, we have $c_1=c$.
Thus the positive procedure succeeds if and only if the first strictified column has only positive entries.
By Definition~\ref{DefKeylambda} and the nesting of the strictified columns, this is equivalent to the existence of $\alpha^\lambda$.

Suppose now that the procedure succeeds, so
$q_t=\widetilde q_t$ for every $t$.
The output $\beta$ satisfies, for every $r\geq1$,
\[
\{j:\beta_j\geq r\}
=\{q_t:\alpha_{i_t}\geq r\}
=\{q_t:1\leq t\leq c_r\}.
\]
The left-hand side is the set of entries in column $r$ of $K(\beta)$; by~\eqref{EqParkingColumns}, the right-hand side is the set of entries in column $r$ of $\operatorname{str}(B_+^\lambda(\alpha))$.
Consequently,
\[
K(\beta)=\operatorname{str}\bigl(B_+^\lambda(\alpha)\bigr)
=K(\alpha^\lambda),
\]
and hence $\beta=\alpha^\lambda$.
The argument applies to every ordering compatible with the parts of $\alpha$, proving independence of the choices made when breaking ties.
\end{proof}

\begin{example}
Let
\[
\lambda=(8,8,7,6,6,4,3)
\qquad\text{and}\qquad
\alpha=(3,7,4,6,2,5,1).
\]
The parts of $\alpha$ are processed in decreasing order, corresponding to the indices
\[
i_1,\ldots,i_7=2,4,6,3,1,5,7.
\]

The following diagram records the parking procedure directly on the Ferrers diagram of $\lambda$.
In row $i$, the car $\alpha_i$ is placed in its final parking position.
A dot marks its initial position $\lambda[i]$, and an arrow is drawn when the car must move to the first available position on its left.
The labels represent cars and are not tableau entries.

\begin{center}
\colorlet{parkingcolor}{blue!55!red}
\begin{tikzpicture}[
  x=.55cm,
  y=.55cm,
  car/.style={
    font=\small,
    text=parkingcolor,
    fill=white,
    inner sep=1pt
  }
]

\foreach \i/\len in {
  1/8,
  2/8,
  3/7,
  4/6,
  5/6,
  6/4,
  7/3
}{
  \node[anchor=e,font=\scriptsize]
    at (-.18,.5-\i) {$i=\i$};

  \foreach \j in {1,...,\len}{
    \draw[gray!60]
      (\j-1,1-\i) rectangle (\j,-\i);
  }
}

\foreach \j in {1,...,8}{
  \node[font=\scriptsize] at (\j-.5,.28) {$\j$};
}

\node[car] at (7.5,-1.5) {$7$};
\node[car] at (6.5,-2.5) {$4$};
\node[car] at (5.5,-3.5) {$6$};
\node[car] at (3.5,-5.5) {$5$};

\fill[parkingcolor] (7.5,-.5) circle (1.3pt);
\draw[
  parkingcolor,
  ->,
  thick,
  shorten <=2pt,
  shorten >=5pt
]
  (7.5,-.5) -- (4.5,-.5);
\node[car] at (4.5,-.5) {$3$};

\fill[parkingcolor] (5.5,-4.5) circle (1.3pt);
\draw[
  parkingcolor,
  ->,
  thick,
  shorten <=2pt,
  shorten >=5pt
]
  (5.5,-4.5) -- (2.5,-4.5);
\node[car] at (2.5,-4.5) {$2$};

\fill[parkingcolor] (2.5,-6.5) circle (1.3pt);
\draw[
  parkingcolor,
  ->,
  thick,
  shorten <=2pt,
  shorten >=5pt
]
  (2.5,-6.5) -- (1.5,-6.5);
\node[car] at (1.5,-6.5) {$1$};
\end{tikzpicture}
\end{center}

The cars $7,6,5$, and $4$ park at their initial bounds.
The car $3$ moves from position $8$ to position $5$, the car $2$ moves from position $6$ to position $3$, and the car $1$ moves from position $3$ to position $2$.

Reading the parked cars by position, from left to right, and placing $0$ in the unoccupied first position, gives
\[
\alpha^\lambda=(0,1,2,5,3,6,4,7).
\]
\end{example}

\section{Skew Ferrers shapes and two-sided key bounds}
\label{SecSkewFerrers}

The preceding results characterize the RSK image of matrices supported on a Ferrers diagram $\lambda$.
We now extend this description to a skew Ferrers diagram $\lambda/\mu$.
The outer boundary $\lambda$ produces an upper bound on the right key of the insertion tableau, whereas the inner boundary $\mu$ produces, through Schützenberger evacuation, a lower bound on its left key.

Let $\mu\subseteq\lambda\subseteq(m^n)$, where both partitions are regarded as having $n$ parts by appending zeros.
Define
\[
\mu^\vee
=
(m-\mu_n,m-\mu_{n-1},\ldots,m-\mu_1).
\]
Thus, $\mu^\vee$ is the Ferrers shape obtained by rotating the
complement of $\mu$ in the rectangle $(m^n)$ by $180^\circ$.

\begin{proposition}
\label{PropSkewFerrersRSK}
Let $\mu\subseteq\lambda\subseteq(m^n)$, and let $A$ be an $n\times m$ matrix with entries in $\mathbb N$.
Suppose that
\[
\operatorname{RSK}(A)=(P,Q),
\]
where $P$ has entries in $\{1,\ldots,m\}$ and $Q$ has entries in
$\{1,\ldots,n\}$.

Then $\operatorname{supp}(A)\subseteq\lambda/\mu$ if and only if $(P,Q)\ \text{is $\lambda$-admissible}$ and $\bigl(\operatorname{ev}_m(P),\operatorname{ev}_n(Q)\bigr)$ is $\mu^\vee$-admissible, where $\operatorname{ev}_r$ denotes the Schützenberger involution on tableaux with entries in $\{1,\ldots,r\}$.
\end{proposition}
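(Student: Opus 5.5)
The plan is to reduce the skew statement to two applications of Theorem~\ref{TeoremaKeysLambda}, one to $A$ and one to its $180^\circ$ rotation $A^\circ$, defined by $A^\circ_{i,j}=A_{n+1-i,\,m+1-j}$.

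\textbf{Step 1: the support condition splits.} A cell $(i,j)$ with $1\le i\le n$ and $1\le j\le m$ lies in $\lambda/\mu$ exactly when $j\le\lambda_i$ and $j>\mu_i$. Put $i'=n+1-i$ and $j'=m+1-j$. Then
\[
j>\mu_i \iff j'\le m-\mu_{n+1-i'}=\mu^\vee[i'].
\]
Hence $\operatorname{supp}(A)\subseteq\lambda/\mu$ if and only if two things hold: $\operatorname{supp}(A)\subseteq\lambda$, and $\operatorname{supp}(A^\circ)\subseteq\mu^\vee$. Here $\mu^\vee$ is a partition contained in $(m^n)$, and $A^\circ$ is still an $n\times m$ matrix.

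A small technicality is the ambient alphabet. Theorem~\ref{TeoremaKeysLambda} is stated with $m=\lambda_1$, and $\lambda_1$ may be smaller than $m$; likewise $\mu^\vee$ may have trailing zero parts. I will first check that the theorem holds verbatim for any $m\ge\lambda_1$ and for partitions with zero parts, where $\lambda[i]=0$ forbids row $i$ entirely. The proofs of Propositions~\ref{PropInsPQlambda}, \ref{PropReciprocalambda} and~\ref{LimiteBuenoYMalolambda} never use $m=\lambda_1$. The bound $K_+(P)\le B_+^\lambda$ already forces entries to be at most $\lambda_1$, and it is unsatisfiable whenever a zero appears in the bound. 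So the $\lambda$-admissibility of a pair is independent of the chosen $m$.

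\textbf{Step 2: apply the Ferrers theorem to each matrix.} By Theorem~\ref{TeoremaKeysLambda}, applied to the biword of $A$, $\operatorname{supp}(A)\subseteq\lambda$ if and only if $(P,Q)$ is $\lambda$-admissible. Applied to $A^\circ$ with the shape $\mu^\vee$, it gives that $\operatorname{supp}(A^\circ)\subseteq\mu^\vee$ if and only if $\operatorname{RSK}(A^\circ)$ is $\mu^\vee$-admissible.

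\textbf{Step 3: RSK commutes with rotation via evacuation.} It remains to show
\[
\operatorname{RSK}(A^\circ)=(\operatorname{ev}_m(P),\operatorname{ev}_n(Q)).
\]
This is the classical symmetry of RSK, cited for example from \cite[Appendix~A]{Fulton1996} or \cite{Stanley_Fomin_1999}. Rotating the matrix by $180^\circ$ reverses the lexicographic biword and complements both alphabets. For permutation matrices this is Schützenberger's theorem that the reverse-complement sends $(P,Q)$ to $(\operatorname{ev}(P),\operatorname{ev}(Q))$. For general $\mathbb N$-matrices one reduces by standardization. Standardization commutes with RSK. Rotation of the standardized biword corresponds to standardizing the rotated biword, with ties among equal letters reversed consistently with the complemented order. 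Evacuation commutes with standardization, since $\operatorname{ev}$ of a semistandard tableau is computed on its standardization and then destandardized with the reversed weight.

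I expect this evacuation identity, specifically the bookkeeping of ties in the semistandard setting, to be the only delicate point. Since it is the standard compatibility the introduction already invokes, I would cite it rather than reprove it, perhaps adding one sentence verifying that the standardization conventions match.

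\textbf{Conclusion.} Combining Steps 1–3 gives the claimed equivalence.
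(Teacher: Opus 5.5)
Your proposal is correct and follows the same route as the paper. You split $\operatorname{supp}(A)\subseteq\lambda/\mu$ into $\operatorname{supp}(A)\subseteq\lambda$ and $\operatorname{supp}(A^\circ)\subseteq\mu^\vee$, apply Theorem~\ref{TeoremaKeysLambda} to both matrices, and cite the standard identity $\operatorname{RSK}(A^\circ)=(\operatorname{ev}_m(P),\operatorname{ev}_n(Q))$; your extra check that the theorem still holds when $m>\lambda_1$ or when the boundary has zero parts is a point the paper uses without comment, and your justification of it is sound.
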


\begin{proof}
Let $A^\circ$ denote the matrix obtained from $A$ by rotation through $180^\circ$, so that
\[
(A^\circ)_{ij}=A_{n+1-i,m+1-j}.
\]
We first observe that
\[
\operatorname{supp}(A)\subseteq\lambda/\mu
\quad\Longleftrightarrow\quad
\operatorname{supp}(A)\subseteq\lambda
\ \text{and}\
\operatorname{supp}(A^\circ)\subseteq\mu^\vee.
\]
Indeed, a cell $(i,j)$ belongs to $\lambda/\mu$ precisely when $\mu_i<j\leq\lambda_i$.
The inequality $j\leq\lambda_i$ is exactly the condition $(i,j)\in\lambda$. On the other hand, under the rotation
\[
(i,j)\longmapsto(i',j')
=(n+1-i,m+1-j),
\]
the inequality $j>\mu_i$ becomes
\[
j'=m+1-j\leq m-\mu_i
   =\mu^\vee_{\,n+1-i}
   =\mu^\vee_{\,i'}.
\]
Thus $(i',j')\in\mu^\vee$, proving the claimed equivalence.

Now suppose that $\operatorname{RSK}(A)=(P,Q)$.
By the standard compatibility of RSK with Schützenberger evacuation under $180^\circ$ rotation of the matrix (see, e.g., \cite[Appendix A.1]{Fulton1996}),
\[
\operatorname{RSK}(A^\circ)
=
\bigl(\operatorname{ev}_m(P),
      \operatorname{ev}_n(Q)\bigr).
\]
Hence, by Theorem~\ref{TeoremaKeysLambda},
\[
\operatorname{supp}(A)\subseteq\lambda
\quad\Longleftrightarrow\quad
(P,Q)\text{ is $\lambda$-admissible},
\]
whereas, applying the same theorem to $A^\circ$ and $\mu^\vee$,
\[
\operatorname{supp}(A^\circ)\subseteq\mu^\vee
\quad\Longleftrightarrow\quad
\bigl(\operatorname{ev}_m(P),\operatorname{ev}_n(Q)\bigr)
\text{ is $\mu^\vee$-admissible}.
\]
Combining these equivalences gives the result.
\end{proof}

A pair $(P,Q)$ satisfying the two conditions in Proposition~\ref{PropSkewFerrersRSK} will be called $(\lambda/\mu)$-admissible.

\begin{corollary}
\label{CorSkewFerrersKernel}
\[
\prod_{\substack{1\leq i\leq n\\
                  \mu_i<j\leq\lambda_i}}
\frac{1}{1-x_i y_j}
=
\sum_{\substack{(P,Q)\\
                (P,Q)\text{ is }(\lambda/\mu)\text{-admissible}}}
x^{\operatorname{wt}(Q)}
y^{\operatorname{wt}(P)}.
\]
Here the sum ranges over all $(\lambda/\mu)$-admissible pairs of semistandard tableaux of the same shape.
\end{corollary}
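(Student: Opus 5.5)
The plan is to run the same generating-function argument used for Corollary~\ref{CorKernellambda}, with Proposition~\ref{PropSkewFerrersRSK} in place of Theorem~\ref{TeoremaKeysLambda}. First, expand the left-hand side. Each factor $1/(1-x_iy_j)$ with $\mu_i<j\leq\lambda_i$ equals $\sum_{a\geq0}(x_iy_j)^{a}$. Multiplying these out gives
\[
\prod_{\substack{1\leq i\leq n\\ \mu_i<j\leq\lambda_i}}\frac{1}{1-x_iy_j}
=\sum_{A}\prod_{i,j}(x_iy_j)^{A_{ij}},
\]
where $A$ ranges over $n\times m$ matrices with entries in $\mathbb N$ and $\operatorname{supp}(A)\subseteq\lambda/\mu$. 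Each such matrix $A$ contributes $x^{\mathrm{row}(A)}y^{\mathrm{col}(A)}$, where $\mathrm{row}(A)$ and $\mathrm{col}(A)$ are the vectors of row sums and column sums. Equivalently, we may sum over lexicographically ordered biwords whose biletters $\binom{i}{j}$ lie in $\lambda/\mu$; the top and bottom weights of such a biword are the row and column sums of $A$.

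Second, apply ordinary RSK to all $n\times m$ matrices. This is a bijection onto pairs $(P,Q)$ of semistandard tableaux of the same shape with $P\in\operatorname{Tab}_{[m]}$ and $Q\in\operatorname{Tab}_{[n]}$. It satisfies $\operatorname{wt}(Q)=\mathrm{row}(A)$ and $\operatorname{wt}(P)=\mathrm{col}(A)$, as recalled in the proof of Corollary~\ref{CorKernellambda}. By Proposition~\ref{PropSkewFerrersRSK}, the condition $\operatorname{supp}(A)\subseteq\lambda/\mu$ holds exactly when $(P,Q)$ is $(\lambda/\mu)$-admissible. So RSK restricts to a bijection from matrices supported on $\lambda/\mu$ onto $(\lambda/\mu)$-admissible pairs.

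Third, this restricted bijection preserves weights. Summing $x^{\operatorname{wt}(Q)}y^{\operatorname{wt}(P)}$ over the image therefore gives the same series as the left-hand side, which is the stated identity.

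There is no serious obstacle, since all the substance is in Proposition~\ref{PropSkewFerrersRSK}. The only point to check is the bookkeeping: the admissibility of the evacuated pair is defined for tableaux with entries in $[m]$ and $[n]$, matching the alphabets of $P$ and $Q$. Taking $\operatorname{ev}_m$ and $\operatorname{ev}_n$ on exactly these alphabets, as in the proposition, makes the characterization apply verbatim.
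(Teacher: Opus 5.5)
Your proposal is correct and follows the paper's proof essentially verbatim. You expand each factor as a geometric series to sum over $\mathbb N$-matrices supported on $\lambda/\mu$, then use that RSK is a weight-preserving bijection onto same-shape pairs in $\operatorname{Tab}_{[m]}\times\operatorname{Tab}_{[n]}$, with row sums going to $\operatorname{wt}(Q)$ and column sums to $\operatorname{wt}(P)$. Finally, you invoke Proposition~\ref{PropSkewFerrersRSK} to identify the image of the supported matrices with the $(\lambda/\mu)$-admissible pairs, exactly as the paper does.
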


\begin{proof}
Expanding each factor as a geometric series gives
\[
\prod_{\substack{1\leq i\leq n\\
                  \mu_i<j\leq\lambda_i}}
\frac{1}{1-x_i y_j}
=
\sum_{\operatorname{supp}(A)\subseteq\lambda/\mu}
\prod_{i=1}^{n}\prod_{j=1}^{m}
(x_i y_j)^{A_{ij}},
\]
where the sum ranges over all $n\times m$ matrices with entries in $\mathbb N$ whose support is contained in $\lambda/\mu$.

If $\operatorname{RSK}(A)=(P,Q)$, then RSK preserves the row and column sums of $A$. Consequently,
\[
\prod_{i,j}(x_i y_j)^{A_{ij}}
=
x^{\operatorname{wt}(Q)}
y^{\operatorname{wt}(P)}.
\]
Moreover, RSK is a bijection between matrices in $\mathbb N^{n\times m}$ and pairs $(P,Q)$ of semistandard tableaux of the same shape, with $P$ having entries in $\{1,\ldots,m\}$ and $Q$ having entries in $\{1,\ldots,n\}$.

By Proposition~\ref{PropSkewFerrersRSK}, the matrices satisfying
\[
\operatorname{supp}(A)\subseteq\lambda/\mu
\]
correspond precisely to the $(\lambda/\mu)$-admissible pairs.
Therefore
\[
\prod_{\substack{1\leq i\leq n\\
                  \mu_i<j\leq\lambda_i}}
\frac{1}{1-x_i y_j}
=
\sum_{\substack{(P,Q)\\
                (P,Q)\text{ is }(\lambda/\mu)\text{-admissible}}}
x^{\operatorname{wt}(Q)}
y^{\operatorname{wt}(P)},
\]
as claimed.
\end{proof}

It is useful to introduce an auxiliary notation, since the skew condition naturally gives a lower key bound coming from $\mu$ and an upper key bound coming from $\lambda$.

For $\alpha\in\mathbb N^n$, whenever the right-hand side is defined, set
\[
\alpha_\mu
:=
\omega_m\!\left((\omega_n\alpha)^{\mu^\vee}\right)
\in\mathbb N^m.
\]
By Section~\ref{SecLeftKeysEvacuation}, this is the index obtained by translating the $\mu^\vee$-dependent upper right-key bound under evacuation into a lower left-key bound.

For weak compositions $\gamma,\delta \in\mathbb N^r$ satisfying $\gamma^+=\delta^+$, let
\[
K_{\gamma,\delta}(y_1,\ldots,y_r)
=
\sum_{\substack{T\in\operatorname{Tab}_{[r]}(\gamma^+)\\
K(\gamma)\leq K_-(T)\leq K_+(T)\leq K(\delta)}}
y^{\operatorname{wt}(T)}.
\]
When $\gamma\leq\delta$, the polynomials \(K_{\gamma,\delta}\) are the standard basis polynomials of Lascoux and Schützenberger \cite{Lascoux1990Schutzenberger}.
Thus $K_{\gamma,\delta}(y)$ is the generating function of the semistandard tableaux whose left and right keys lie in the interval determined by $K(\gamma)$ and $K(\delta)$.

For $\alpha,\beta\in\mathbb N^r$ satisfying $\alpha^+=\beta^+$,
we will also use the generating function associated with a fixed
pair of extremal keys
\[
 \mathcal{A}_{\alpha,\beta}(x_{1},\ldots,x_{r})
=
\sum_{\substack{T\in\operatorname{Tab}_{[r]}(\alpha^+)\\
K_-(T)=K(\alpha)\\
K_+(T)=K(\beta)}}
x^{\operatorname{wt}(T)}.
\]

\begin{definition}
Let $\mu\subseteq \lambda \subseteq(m^n)$ be partitions. We define
\[
\operatorname{Comp}(\mu,\lambda)
=
\left\{
(\alpha,\beta)\in\mathbb N^n\times\mathbb N^n:
\alpha^+=\beta^+,\;
\alpha\leq\beta,\;
\omega_n\alpha\in\operatorname{Comp}(\mu^\vee),\;
\beta\in\operatorname{Comp}(\lambda),\alpha_{\mu} \leq \beta^{\lambda}
\right\},
\]
where $\leq$ denotes the Bruhat order on weak compositions with the same decreasing rearrangement.
\end{definition}

The diagram illustrates how the left and right keys of $Q$
determine, respectively, the lower and upper key bounds on $P$.
\[
\begin{tikzcd}[column sep=large,row sep=large]
K(\alpha) = K_-(Q)
  \arrow[d, mapsto, "\alpha\mapsto\alpha_\mu"']
&
Q
  \arrow[l, "K_-"']
  \arrow[r, "K_+"]
&
K_+(Q)=K(\beta)
  \arrow[d, mapsto, "\beta\mapsto\beta^\lambda"]
\\
K(\alpha_\mu)
  \arrow[r, "\leq"]
&
K_-(P)\leq K_+(P)
  \arrow[r, "\leq"]
&
K(\beta^\lambda).
\end{tikzcd}
\]

With this notation, the skew Cauchy identity takes the following form.
\begin{corollary}
\label{CorSkewFerrersKeys}
With the notation above,
\[
\prod_{\substack{1\leq i\leq n\\
                  \mu_i<j\leq\lambda_i}}
\frac{1}{1-x_i y_j}
=
\sum_{
(\alpha,\beta)\in\operatorname{Comp}(\mu,\lambda)
}
\mathcal{A}_{\alpha,\beta}(x)\,
K_{\alpha_\mu,\beta^\lambda}(y).
\]
\end{corollary}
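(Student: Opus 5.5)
The plan is to start from Corollary~\ref{CorSkewFerrersKernel}, which expresses the kernel as a sum over $(\lambda/\mu)$-admissible pairs $(P,Q)$. We then translate the two admissibility conditions of Proposition~\ref{PropSkewFerrersRSK} into key conditions on $P$ that depend only on the extremal keys of $Q$.

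\textbf{The outer condition.} Write $K_-(Q)=K(\alpha)$ and $K_+(Q)=K(\beta)$. These are key tableaux of shape $\operatorname{sh}(Q)$, so $\alpha^+=\beta^+$. Moreover $K_-(Q)\le K_+(Q)$, which gives $\alpha\le\beta$ in Bruhat order. Since $Q\in\mathcal S(\beta)$, Proposition~\ref{PropStrictificationBound} shows that $\lambda$-admissibility of $(P,Q)$ is equivalent to $\beta\in\operatorname{Comp}(\lambda)$ together with $K_+(P)\le K(\beta^\lambda)$.

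\textbf{The inner condition.} By Section~\ref{SecLeftKeysEvacuation},
\[
K_+(\operatorname{ev}_n Q)=\operatorname{ev}_n(K_-(Q))=K(\omega_n\alpha),
\]
so $\operatorname{ev}_n(Q)\in\mathcal S(\omega_n\alpha)$. Applying Proposition~\ref{PropStrictificationBound} again, $\mu^\vee$-admissibility of $(\operatorname{ev}_mP,\operatorname{ev}_nQ)$ is equivalent to $\omega_n\alpha\in\operatorname{Comp}(\mu^\vee)$ together with
\[
K_+(\operatorname{ev}_mP)\le K\bigl((\omega_n\alpha)^{\mu^\vee}\bigr).
\]
We have $K_+(\operatorname{ev}_mP)=\operatorname{ev}_m(K_-(P))$, and evacuation reverses the order on key tableaux of equal shape while sending $K(\gamma)$ to $K(\omega_m\gamma)$. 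Hence this condition becomes $K_-(P)\ge K(\alpha_\mu)$. The shape of $P$ equals the shape of $Q$, which is $\alpha^+$. We also need $(\alpha_\mu)^+=\alpha^+$ so that the comparison makes sense. This holds because strictification preserves column lengths, so $K((\omega_n\alpha)^{\mu^\vee})$ has shape $\alpha^+$, and $\omega_m$ preserves the sorted partition.

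\textbf{Assembling the sum.} The $(\lambda/\mu)$-admissible pairs are therefore exactly the pairs with the following properties: $Q$ has $K_-(Q)=K(\alpha)$ and $K_+(Q)=K(\beta)$, with $\beta\in\operatorname{Comp}(\lambda)$ and $\omega_n\alpha\in\operatorname{Comp}(\mu^\vee)$; and $P$ has shape $\alpha^+$ with
\[
K(\alpha_\mu)\le K_-(P)\le K_+(P)\le K(\beta^\lambda).
\]
Here $K_-(P)\le K_+(P)$ is automatic. We group the sum by the pair $(\alpha,\beta)$. The sum over $Q$ gives $\mathcal A_{\alpha,\beta}(x)$, and the sum over $P$ gives $K_{\alpha_\mu,\beta^\lambda}(y)$. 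The conditions $\alpha^+=\beta^+$ and $\alpha\le\beta$ are necessary for $\mathcal A_{\alpha,\beta}\neq0$, so including them in the index set loses nothing.

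\textbf{Main obstacle.} The remaining step is to justify adding $\alpha_\mu\le\beta^\lambda$ to the index set. If this inequality fails, no key tableau can lie between $K(\alpha_\mu)$ and $K(\beta^\lambda)$, so $K_{\alpha_\mu,\beta^\lambda}=0$ and the term may be dropped. Thus the sum equals the sum over $\operatorname{Comp}(\mu,\lambda)$. The delicate bookkeeping is confirming that the composite of strictification, evacuation and $\omega$ stays in $\mathbb N^m$ with the correct shape. I expect this to follow directly from the column-length preservation noted above.
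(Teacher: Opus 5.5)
Your proposal is correct and follows essentially the same route as the paper: group $(\lambda/\mu)$-admissible pairs by $\bigl(K_-(Q),K_+(Q)\bigr)=\bigl(K(\alpha),K(\beta)\bigr)$, translate the outer condition into $K_+(P)\leq K(\beta^\lambda)$, and translate the inner condition via evacuation into $K(\alpha_\mu)\leq K_-(P)$. The only difference is cosmetic. You justify the constraint $\alpha_\mu\leq\beta^\lambda$ by noting that $K_{\alpha_\mu,\beta^\lambda}=0$ when it fails, whereas the paper obtains it from the chain $K(\alpha_\mu)\leq K_-(P)\leq K_+(P)\leq K(\beta^\lambda)$ for an actual admissible $P$; these are the same observation.
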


\begin{proof}
By Corollary~\ref{CorSkewFerrersKernel}, it suffices to group the $(\lambda/\mu)$-admissible pairs $(P,Q)$ according to the left and right keys of $Q$.
Thus fix $\alpha,\beta\in\mathbb N^n$ such that
\[
K_-(Q)=K(\alpha),
\qquad
K_+(Q)=K(\beta).
\]
Because $K_-(Q)$ and $K_+(Q)$ have the same shape, we have $\alpha^+=\beta^+$.
The inequality $K_-(Q)\leq K_+(Q)$, together with the tableau criterion recalled in Section~\ref{seckeys}, then gives $\alpha\leq\beta$.

The $\lambda$-admissibility of $(P,Q)$ implies that $\beta\in\operatorname{Comp}(\lambda)$ and $K_+(P)\leq K(\beta^\lambda)$.

On the other hand,
\[
K_+\bigl(\operatorname{ev}_n(Q)\bigr)
=
\operatorname{ev}_n\bigl(K_-(Q)\bigr)
=
K(\omega_n\alpha).
\]
Since $\bigl(\operatorname{ev}_m(P),\operatorname{ev}_n(Q)\bigr)$ is $\mu^\vee$-admissible, we have $\omega_n\alpha\in\operatorname{Comp}(\mu^\vee)$ and
\[
K_+\bigl(\operatorname{ev}_m(P)\bigr)
\leq
K\bigl((\omega_n\alpha)^{\mu^\vee}\bigr).
\]
Applying $\operatorname{ev}_m$ to the preceding inequality reverses the entrywise order.
Hence
\[
K_-(P)
\geq
K\!\left(
\omega_m\bigl((\omega_n\alpha)^{\mu^\vee}\bigr)
\right)
=
K(\alpha_\mu).
\]
Therefore
\[
K(\alpha_\mu)
\leq K_-(P)
\leq K_+(P)
\leq K(\beta^\lambda).
\]
In particular, by the tableau criterion recalled in Section~\ref{seckeys}, $\alpha_\mu\leq\beta^\lambda$.
Together with
\[
\alpha^+=\beta^+,\qquad
\alpha\leq\beta,\qquad
\omega_n\alpha\in\operatorname{Comp}(\mu^\vee),
\qquad
\beta\in\operatorname{Comp}(\lambda),
\]
this shows that $(\alpha,\beta)\in\operatorname{Comp}(\mu,\lambda)$.

Conversely, fix
$(\alpha,\beta)\in\operatorname{Comp}(\mu,\lambda)$, let $Q$ satisfy
\[
K_-(Q)=K(\alpha),
\qquad
K_+(Q)=K(\beta),
\]
and let $P$ satisfy
\[
K(\alpha_\mu)
\leq K_-(P)
\leq K_+(P)
\leq K(\beta^\lambda).
\]
The upper bound $K_+(P)\leq K(\beta^\lambda)$ is precisely the $\lambda$-admissibility condition associated with $K_+(Q)=K(\beta)$, so $(P,Q)$ is $\lambda$-admissible.

Similarly, applying $\operatorname{ev}_m$ to $K(\alpha_\mu)\leq K_-(P)$ and using $\omega_m\alpha_\mu=(\omega_n\alpha)^{\mu^\vee}$ gives
\[
K_+\bigl(\operatorname{ev}_m(P)\bigr)
\leq
K\bigl((\omega_n\alpha)^{\mu^\vee}\bigr).
\]
Since $K_{+}\bigl(\operatorname{ev}_n(Q)\bigr)
=
K(\omega_n\alpha)$, this is precisely the $\mu^\vee$-admissibility condition for $\bigl(\operatorname{ev}_m(P),\operatorname{ev}_n(Q)\bigr)$.
Hence $(P,Q)$ is $(\lambda/\mu)$-admissible.

Thus, for each $(\alpha,\beta)\in\operatorname{Comp}(\mu,\lambda)$, the
$(\lambda/\mu)$-admissible pairs with
\[
K_-(Q)=K(\alpha),
\qquad
K_+(Q)=K(\beta)
\]
are precisely the Cartesian product of the tableau sets generating $\mathcal{A}_{\alpha,\beta}(x)$ and $K_{\alpha_\mu,\beta^\lambda}(y)$.
Taking generating functions and summing over $\operatorname{Comp}(\mu,\lambda)$ proves the identity.
\end{proof}

Thus the two boundaries of the skew Ferrers diagram have complementary roles: the inner boundary determines the lower left-key bound, while the outer boundary determines the upper right key bound.

The lower transform can also be computed by the parking construction of Proposition~\ref{Parkingalpha}.

\begin{corollary}[Parking construction of the lower transform]
\label{CorParkingLowerTransform}
Let $\mu\subseteq(m^n)$ and let $\beta\in\mathbb N^n$. Set $\overline{\beta}=\omega_n\beta$.
Then $\beta_\mu$ exists if and only if $\overline{\beta}\in\operatorname{Comp}(\mu^\vee)$.
When this condition holds, apply the parking construction of Proposition~\ref{Parkingalpha} to the pair $(\overline{\beta},\mu^\vee)$, obtaining $\overline{\beta}^{\,\mu^\vee}\in\mathbb N^m$.
Then
\[
\beta_\mu
=
\omega_m\!\left(
\overline{\beta}^{\,\mu^\vee}
\right).
\]
Thus the lower transform associated with the inner boundary $\mu$ is computed by reversing the indexing composition, applying the same parking construction used for an upper Ferrers boundary, and finally reversing the output.
\end{corollary}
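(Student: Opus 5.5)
This corollary is essentially a restatement of the definition of $\beta_\mu$ combined with Proposition~\ref{Parkingalpha}. The plan is to unwind the definitions carefully rather than introduce any new combinatorics.

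\emph{Existence.} By definition, $\beta_\mu=\omega_m\bigl((\omega_n\beta)^{\mu^\vee}\bigr)$ is defined exactly when the inner transform $(\omega_n\beta)^{\mu^\vee}=\overline{\beta}^{\,\mu^\vee}$ exists. By Definition~\ref{DefKeylambda}, this happens precisely when $\overline{\beta}\in\operatorname{Comp}(\mu^\vee)$. Here $\mu^\vee$ is a partition with first part $m-\mu_n\le m$. Following the paper's convention, its transforms are regarded in $\mathbb N^m$ by appending trailing zeros, so $\omega_m$ applies. This gives the first assertion.

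\emph{Formula.} When $\overline{\beta}\in\operatorname{Comp}(\mu^\vee)$, apply Proposition~\ref{Parkingalpha} to the partition $\mu^\vee$ and the composition $\overline{\beta}\in\mathbb N^n$. That proposition says the parking procedure succeeds and that its output equals $\overline{\beta}^{\,\mu^\vee}$. It also shows that the procedure succeeds if and only if $\overline{\beta}\in\operatorname{Comp}(\mu^\vee)$, which gives a second, parking-based form of the existence criterion. Applying $\omega_m$ to the output then yields $\beta_\mu$ by definition.

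\emph{The one point needing care.} This is the bookkeeping when $\mu_n>0$. In that case $(\mu^\vee)_1<m$, so Proposition~\ref{Parkingalpha} naturally produces a composition of length $(\mu^\vee)_1$, not $m$. I would remark that padding with zeros on the right leaves the key tableau $K(\cdot)$ unchanged. Hence the padded output is the element of $\mathbb N^m$ used in the definition of $\beta_\mu$, and reversing it with $\omega_m$, rather than with the shorter reversal, is the correct operation. Apart from this, the argument is immediate.
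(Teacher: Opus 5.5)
Your proposal is correct and follows the same route as the paper. Both unwind the definition $\beta_\mu=\omega_m\bigl((\omega_n\beta)^{\mu^\vee}\bigr)$ to get the existence criterion, then invoke Proposition~\ref{Parkingalpha} to compute $(\omega_n\beta)^{\mu^\vee}$ by parking before reversing. Your remark on zero-padding when $\mu_n>0$ is sound: padding does not change $K(\cdot)$. The paper covers this point with its stated convention that transforms for a boundary inside $(m^n)$ are regarded as elements of $\mathbb N^m$.
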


\begin{proof}
By definition,
\[
\beta_\mu
=
\omega_m\!\left(
(\omega_n\beta)^{\mu^\vee}
\right).
\]
Hence $\beta_\mu$ exists precisely when $\omega_n\beta\in\operatorname{Comp}(\mu^\vee)$.

When this condition holds, Proposition~\ref{Parkingalpha} computes $(\omega_n\beta)^{\mu^\vee}$ by the parking procedure.
Reversing the resulting composition gives $\beta_\mu$, as claimed.
\end{proof}

\begin{example}
Let
\[
\lambda=(m,m-1,\ldots,1),
\qquad
\mu=(m-1,m-2,\ldots,1),
\]
regarded as partitions in $(m^m)$ by appending zeros.
Then $\lambda/\mu$ consists precisely of the cells $(i,j)$ such that $i+j=m+1$, and $\mu^\vee=\lambda$.

For this staircase, the initial parking positions $\lambda[i]=m+1-i$ are distinct, so every part parks at its initial position. Hence $\gamma^\lambda=\omega_m\gamma$ for every $\gamma\in\mathbb N^m$.
Since $\mu^\vee=\lambda$, we obtain
\[
\alpha_\mu
=\omega_m\!\left((\omega_m\alpha)^\lambda\right)
=\omega_m^{3}\alpha=\omega_m\alpha,
\qquad
\beta^\lambda=\omega_m\beta.
\]
Thus the conditions defining $\operatorname{Comp}(\mu,\lambda)$ include $\alpha\leq\beta$ and $\omega_m\alpha\leq\omega_m\beta$.

Since $\omega_m$ reverses Bruhat order, these inequalities force $\alpha=\beta$.
Conversely, $(\alpha,\alpha)\in\operatorname{Comp}(\mu,\lambda)$ for every weak composition $\alpha\in\mathbb N^m$.
Consequently,
\[
\prod_{i=1}^{m}\frac{1}{1-x_i y_{m+1-i}}
=
\sum_{\alpha\in\mathbb N^m}
\mathcal A_{\alpha,\alpha}(x)\,
K_{\omega_m\alpha,\omega_m\alpha}(y).
\]

The inequalities $K_-(T)\leq T\leq K_+(T)$ show that $K_-(T)=K_+(T)=K(\alpha)$ forces $T=K(\alpha)$.
Hence
\[
\mathcal A_{\alpha,\alpha}(x)=x^\alpha,
\qquad
K_{\omega_m\alpha,\omega_m\alpha}(y)=y^{\omega_m\alpha}.
\]
The kernel identity therefore reduces to
\[
\prod_{i=1}^{m}\frac{1}{1-x_i y_{m+1-i}}
=
\sum_{\alpha\in\mathbb N^m}x^\alpha y^{\omega_m\alpha}.
\]
This description can also be checked directly under RSK.
If the biletter $\binom{i}{m+1-i}$ occurs $\alpha_i$ times, the insertion word is $m^{\alpha_1}(m-1)^{\alpha_2}\cdots1^{\alpha_m}$, where exponents denote repetitions.

Inserting the block corresponding to $i$ adds one cell in each of the columns $1,\ldots,\alpha_i$.
Consequently, column $r$ of $Q$ contains precisely $\{i:\alpha_i\geq r\}$, and column $r$ of $P$ contains precisely $\{m+1-i:\alpha_i\geq r\}$.
These are exactly the columns of $K(\alpha)$ and $K(\omega_m\alpha)$, respectively. Thus,
\[
(P,Q)=\bigl(K(\omega_m\alpha),K(\alpha)\bigr).
\]

Hence both tableaux are keys, and
$P=\operatorname{ev}_m(Q)$.
\end{example}

\section{Specializations and comparison to previous results}
\label{SecSpecializations}

\subsection{Half-bubble sort}
\label{SubsecHalfBubbleSort}

We now compare our construction with that of \cite{FeiginKhoroshkinMakedonskyi2026}.

\begin{proposition}[Comparison with half-bubble-sort]
\label{PropHalfBubbleSortComparison}
Let
\[
\lambda=(\lambda_1,\ldots,\lambda_N),
\qquad
M=\lambda_1,
\qquad
\alpha=(\alpha_1,\ldots,\alpha_N)\in\mathbb N^N.
\]
Define
\[
\overline n=(\lambda_N,\lambda_{N-1},\ldots,\lambda_1)
\qquad\text{and}\qquad
\overline d=(\alpha_N,\alpha_{N-1},\ldots,\alpha_1).
\]
Thus, in the notation of \cite{FeiginKhoroshkinMakedonskyi2026}, the entries of $\overline n$ are the weakly increasing column lengths of the transposed and horizontally reflected diagram of $\lambda$.

Then the following statements hold.

\begin{enumerate}
\item
The composition $\overline d$ is $\overline n$-admissible in the sense of \cite[Definition~1.14]{FeiginKhoroshkinMakedonskyi2026} if and only if $\alpha\in\operatorname{Comp}(\lambda)$.

\item
Whenever these equivalent conditions hold, $\mathsf{hbs}_{\overline n}(\overline d)=\alpha^\lambda$.
\end{enumerate}
\end{proposition}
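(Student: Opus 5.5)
The plan is to reduce both statements to the two direct descriptions of Section~\ref{SecDirectAlphaLambda}: the support criterion of Proposition~\ref{PropExistenceAlphaLambda} and the parking construction of Proposition~\ref{Parkingalpha}. Neither the strictification nor the RSK analysis is needed again. The first step is bookkeeping. I will recall the definition of $\overline n$-admissibility and of $\mathsf{hbs}_{\overline n}$ from \cite{FeiginKhoroshkinMakedonskyi2026}, and rewrite them in our indexing under the reversal $k\mapsto N+1-k$. This reversal sends $\overline d_k$ to $\alpha_{N+1-k}$ and the column length $\overline n_k$ to $\lambda[N+1-k]$. Positions in the output, which range over $\{1,\ldots,M\}$, should need no reversal; I will check this against a small example such as $\lambda=(2,1)$.

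For part (1), I expect the admissibility condition of \cite[Definition~1.14]{FeiginKhoroshkinMakedonskyi2026} to be a Hall-type inequality on the support of $\overline d$. Concretely, for each threshold $h$, the number of indices $k$ in the support with $\overline n_k\leq h$ should be at most $h$. After reversal, the support $\{i_1<\cdots<i_c\}$ of $\alpha$ gives bounds $\lambda[i_1]\geq\cdots\geq\lambda[i_c]$. The Hall condition then says $|\{j:\lambda[i_j]\leq h\}|\leq h$ for all $h$. I will show this is equivalent to $\lambda[i_j]\geq c-j+1$ for all $j$. In one direction, taking $h=\lambda[i_j]$ shows that at least the $c-j+1$ indices $j,\ldots,c$ have bound at most $h$, which forces $h\geq c-j+1$. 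In the other direction, if $j$ is the smallest index with $\lambda[i_j]\leq h$, then the count equals $c-j+1\leq\lambda[i_j]\leq h$. If instead their definition is phrased as the success of the half-bubble sort itself, part (1) follows from part (2) together with the failure clause of Proposition~\ref{Parkingalpha}.

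For part (2), I will show that the half-bubble sort performs the same parking as in Proposition~\ref{Parkingalpha}. The plan is to track the set of occupied positions through its steps. I expect $\mathsf{hbs}$ to process entries of $\overline d$ and move each one left, or equivalently down in the reflected diagram, past occupied or larger positions until it settles at the largest free slot not exceeding its column length. The likely discrepancy is the processing order: half-bubble sort probably processes entries in index order rather than by decreasing part size. To handle this, I will use the order-independence of the occupied set from the proof of Proposition~\ref{PropStrInclusion}. I will apply it to each prefix set $I_r(\alpha)=\{i:\alpha_i\geq r\}$, which shows that for each $r$ the set of positions carrying parts $\geq r$ is independent of the processing order. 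These sets determine the composition exactly as in the proof of Proposition~\ref{Parkingalpha}. This matters because half-bubble sort may swap a smaller part past a larger one; what must be tracked is the multiset of occupied positions at each level $r$, not which car ends up where.

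The main obstacle will be this last step: making the local swap rule of half-bubble sort match the level-by-level parking. I would first verify, by induction on the number of sorting steps, that each elementary swap preserves, for every $r$, the strictified set $\operatorname{str}$ of the bounds attached to parts $\geq r$. That invariant would give $\mathsf{hbs}_{\overline n}(\overline d)=K^{-1}(\operatorname{str}(B_+^\lambda(\alpha)))=\alpha^\lambda$.
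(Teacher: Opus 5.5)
Your overall route is the paper's. You translate $\overline n$-admissibility under $s=N+1-i$ and match it with Proposition~\ref{PropExistenceAlphaLambda}. You then compare the level sets $I_h$ of $\mathsf{hbs}_{\overline n}(\overline d)$ and $\alpha^\lambda$, using the order-independence of parking from the proof of Proposition~\ref{PropStrInclusion}.

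Part (1) is fine. \cite[Definition~1.14]{FeiginKhoroshkinMakedonskyi2026} is the prefix condition $\#\{j\le s:\overline d_j\neq0\}\le\overline n_s$ for all $s$, which reverses to $\#\{i\ge r:\alpha_i\neq 0\}\le\lambda[r]$. This is equivalent to your Hall-type form because $\overline n$ is weakly increasing. Your two-direction check against $\lambda[i_j]\ge c-j+1$ is then exactly the paper's argument, so the fallback you mention is not needed. Your observations about the output positions (no reversal) and the processing order ($s=1,\ldots,N$, not by part size) are also correct.

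The gap is in part (2). The step you defer as ``the main obstacle'' is essentially the whole content of (2), and you have not supplied it. Moreover, the mechanism you anticipate is not how $\mathsf{hbs}$ works. It is not a sequence of adjacent swaps, and the inserted part does not simply slide left into a free slot.

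At stage $s$, the current composition $\gamma$ is padded to length $L=\overline n_s$. A nonzero part $d=\overline d_s$ is then inserted by the serpentine step of \cite[Lemma~1.12, Eq.~(1.13)]{FeiginKhoroshkinMakedonskyi2026}. This step produces positions $p_0<\cdots<p_k$ and values $0=u_{-1}<u_0<\cdots<u_k=d$ with $\gamma_{p_t}=u_{t-1}$ and $\mu_{p_t}=u_t$. So $d$ lands at $p_k$, and the displaced values are pushed successively left, the last one into a zero at $p_0$. Individual parts therefore move, and only the level sets behave like parking.

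What you must prove is the following. For $h>d$ the set $I_h$ is unchanged. For each $h\le d$, exactly one position enters $I_h$, namely $\max\bigl([L]\setminus I_h(\gamma)\bigr)$. This comes from the explicit recursion $u_t=\min\bigl(\{d\}\cup\{\gamma_q:p_t<q\le L\}\bigr)$. Take $t$ minimal with $u_t\ge h$. Then every position to the right of $p_t$ already lies in $I_h(\gamma)$, while $\gamma_{p_t}=u_{t-1}<h$, and $p_t$ is the only coordinate that crosses level $h$.

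You also need the padded composition to contain a zero at each nonzero step, so that Lemma~1.12 applies. This is where admissibility enters: before step $s$ there are $\#\{j<s:\overline d_j\ne0\}\le\overline n_s-1$ nonzero entries.

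So the invariant is additive, not something a swap ``preserves''. Each step adds one parked position to every $I_h$ with $h\le d$. Once this level-set parking rule is established, your order-independence argument finishes the proof exactly as you describe, including the remark that admissibility keeps all parked positions positive.
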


\begin{proof}
If $\alpha=0$, then $\overline d=0^N$, both admissibility conditions
hold, and
\[
\mathsf{hbs}_{\overline n}(\overline d)
=
0^M
=
\alpha^\lambda.
\]
Suppose, therefore, that $\alpha\neq0$, and write
\[
\operatorname{supp}(\alpha)=\{i_1<\cdots<i_c\}.
\]

By definition, $\overline d$ is $\overline n$-admissible if and only if
\[
\#\{j\leq s: \overline{d}_j\neq0\}\leq \overline{n}_s
\qquad
\text{for every }1\leq s\leq N.
\]
Setting $r=N+1-s$, this becomes
\begin{equation}
\label{EqFKMAdmissibility}
\#\{i\geq r:\alpha_i\neq0\}\leq\lambda[r]
\qquad
\text{for every }1\leq r\leq N.
\end{equation}
Taking $r=i_j$ in~\eqref{EqFKMAdmissibility} gives
\[
c-j+1\leq\lambda[i_j],
\qquad 1\leq j\leq c,
\]
which is precisely the criterion in Proposition~\ref{PropExistenceAlphaLambda}.

Conversely, suppose that these inequalities hold.
Given $r$, if the support of $\alpha$ contains an index weakly larger than $r$, let $i_j$ be its smallest such index.
Then
\[
\#\{i\geq r:\alpha_i\neq0\}
=
c-j+1
\leq\lambda[i_j]
\leq\lambda[r].
\]
If no such index exists, the left-hand side is zero.
Hence \eqref{EqFKMAdmissibility} holds, proving the first assertion.

We now compare the two constructions.
Recall from \cite[Corollary~1.16 and Notation~1.17]{FeiginKhoroshkinMakedonskyi2026} that half-bubble-sort is obtained by iterating their one-step serpentine construction.
At stage $s$, the preceding composition is regarded as a composition of length $\overline{n}_s$ by appending trailing zeros.

For a composition $\gamma$ and an integer $h\geq1$, put
\[
I_h(\gamma)=\{p:\gamma_p\geq h\}.
\]
A step with $\overline{d}_s=0$ leaves all the sets $I_h$ unchanged.

At every nonzero stage, admissibility guarantees that the padded preceding composition contains at least one zero, so \cite[Lemma~1.12]{FeiginKhoroshkinMakedonskyi2026} applies.

Consider, therefore, one nonzero step $\gamma\longmapsto\mu$ in which a positive part $d$ is inserted into a composition $\gamma\in\mathbb N^L$ containing at least one zero, where $L= \overline{n}_{s}$.
The recursion in \cite[Lemma~1.12, Equation~(1.13)]{FeiginKhoroshkinMakedonskyi2026} provides increasing positions $p_0<p_1<\cdots<p_k$ and values
\[
0=u_{-1}<u_0<\cdots<u_k=d
\]
such that
\[
\gamma_{p_t}=u_{t-1},
\qquad
\mu_{p_t}=u_t,
\qquad 0\leq t\leq k,
\]
while all other coordinates remain unchanged.

If $h>d$, then every modified entry remains smaller than $h$, and therefore
\[
I_h(\mu)=I_h(\gamma).
\]
Suppose that $h\leq d$, and let $t$ be the smallest index such that $u_t\geq h$. Then $p_t$ is the unique coordinate that enters $I_h$ during this step.
Indeed, for $q<t$ both $u_{q-1}$ and $u_q$ are smaller than $h$, whereas for $q>t$ both are at least $h$.
Moreover, the recursive definition gives
\[
u_t
=
\min\bigl(\{d\}\cup
          \{\gamma_q:p_t<q\leq L\}\bigr).
\]
Since $u_t\geq h$, every position to the right of $p_t$ already
belongs to $I_h(\gamma)$, whereas
\[
\gamma_{p_t}=u_{t-1}<h.
\]
Consequently,
\[
p_t
=
\max\bigl(\{1,\ldots,L\}\setminus I_h(\gamma)\bigr),
\]
and hence
\begin{equation}
\label{EqHBSLevelSet}
I_h(\mu)
=
I_h(\gamma)\cup
\left\{
\max\bigl(\{1,\ldots,L\}\setminus I_h(\gamma)\bigr)
\right\}.
\end{equation}

Iterating~\eqref{EqHBSLevelSet}, and observing that the zero steps do
not affect the level sets, shows that
\[
I_h\bigl(\mathsf{hbs}_{\overline n}(\overline d)\bigr)
\]
is obtained by parking, for every $s$ such that $\overline{d}_s\geq h$, a car
with bound $\overline{n}_s$ in the rightmost unoccupied positive position weakly
to the left of $\overline{n}_s$.

Under the correspondence
\[
s=N+1-i,
\qquad
\overline{d}_s=\alpha_i,
\qquad
\overline{n}_s=\lambda[i],
\]
the multiset of these bounds is precisely
\[
\{\lambda[i]:\alpha_i\geq h\}.
\]
The order-independence established in the proof of Proposition~\ref{PropStrInclusion} shows that the occupied set of the unrestricted parking procedure depends only on this multiset of bounds.
Since the equivalent admissibility conditions ensure that both parking procedures remain in positive positions, the same conclusion applies here.

In the parking construction of Proposition~\ref{Parkingalpha}, all indices satisfying $\alpha_i\geq h$ are processed before those satisfying $\alpha_i<h$. Its occupied positions after these cars have been processed are precisely the positions containing parts of $\alpha^\lambda$ weakly larger than $h$.
Therefore,
\[
I_h\bigl(\mathsf{hbs}_{\overline n}(\overline d)\bigr)
=
I_h(\alpha^\lambda)
\qquad
\text{for every }h\geq1.
\]
A weak composition is determined by all its level sets $I_h$, and hence
\[
\mathsf{hbs}_{\overline n}(\overline d)=\alpha^\lambda.
\]
\end{proof}

\begin{remark}
Proposition~\ref{PropHalfBubbleSortComparison} shows that the admissibility and half-bubble-sort constructions of \cite{FeiginKhoroshkinMakedonskyi2026} coincide exactly with $\operatorname{Comp}(\lambda)$ and the parking construction of $\alpha^\lambda$, after reversing the row indexing.
More precisely, the proof identifies each half-bubble-sort step with a parking move on every relevant level set $I_h$.
Thus, the parking construction gives a direct combinatorial interpretation of the recursive serpentine construction of Feigin, Khoroshkin, and Makedonskyi.

This also identifies the corresponding Cauchy expansions.
Indeed, let $X=(X_1,\ldots,X_M)$ and $Y=(Y_1,\ldots,Y_N)$ denote the variables in the right Cauchy identity of \cite[Corollary~3.20]{FeiginKhoroshkinMakedonskyi2026}, and make the substitution
\[
X_j=y_j,
\qquad
Y_s=x_{N+1-s}.
\]
Since
\[
(p,s)\in\mathbf Y_{\overline n}
\quad\Longleftrightarrow\quad
p\leq\lambda[N+1-s],
\]
their kernel becomes
\[
\prod_{(i,j)\in\lambda}\frac{1}{1-x_i y_j}.
\]
Moreover, their relation between ordinary and opposite Demazure atoms \cite[Equation~(2.17)]{FeiginKhoroshkinMakedonskyi2026} gives
\[
a^{\overline d}(x_N,\ldots,x_1)
=
a_{\alpha}(x_1,\ldots,x_N)
=
\hat K_\alpha(x),
\]
while Proposition~\ref{PropHalfBubbleSortComparison} gives
\[
\kappa_{\mathsf{hbs}_{\overline n}(\overline d)}(y)
=
K_{\alpha^\lambda}(y).
\]
Thus their right Cauchy identity becomes exactly Corollary~\ref{CorKernellambda}.

The distinction lies in the interpretation and proof.
Their expansion is obtained from filtrations of the symmetric algebra of the Ferrers-supported matrix space, whereas Theorem~\ref{TeoremaKeysLambda} gives a tableau-theoretic realization: it explicitly characterizes the pairs arising under ordinary RSK from biwords supported on $\lambda$.

In particular, when $\lambda=(N^N)$, the construction rearranges $\alpha$ in weakly increasing order, whereas for $\lambda=(N,N-1,\ldots,1)$ it reverses the order of its entries.
These specializations agree with \cite[Examples~1.21 and~1.22]{FeiginKhoroshkinMakedonskyi2026}.
\end{remark}

\subsection{Lascoux's arbitrary-Ferrers expansion}
\label{subsec:LascouxComparison}

We now compare Corollary~\ref{CorKernellambda} with Lascoux's divided-difference-operator formula for an arbitrary Ferrers diagram \cite[Theorem~7]{Lascoux2003}.
We follow the description of the permutations associated with the diagram given in \cite[Section~6.1]{AzenhasEmami2015}.
Lascoux's formula is expressed in terms of divided-difference-operator and does not directly describe the corresponding tableaux.
Our aim is to make this tableau structure explicit by relating Lascoux's expansion to equation~\eqref{EqKernellambda}.

Choose \(n\geq\max\{\ell(\lambda),\lambda_1\}\), and regard \(\lambda\) as a partition contained in the square \((n^n)\) by appending zero parts if necessary.
Let
\[
\rho_t=(t,t-1,\ldots,1)
\]
be the largest staircase contained in \(\lambda\).
The cells of \(\lambda/\rho_t\) are separated into a northwest part and a southeast part by a southwest-northeast diagonal passing through a cell of \(\rho_{t+1}\setminus\lambda\).

Label every cell in row \(r\) of the northwest part by \(r-1\).
Reading these labels column by column, from right to left and from top to bottom, gives a reduced word for a permutation denoted by \(\sigma(\lambda,\mathrm{NW})\).
Similarly, label every cell in column \(c\) of the southeast part by \(c-1\).
Reading these labels row by row, from top to bottom and from right to left, gives a reduced word for a permutation denoted by \(\sigma(\lambda,\mathrm{SE})\).

With these conventions, Lascoux's identity reads
\begin{equation}
\label{EqLascouxFerrers}
\prod_{(i,j)\in\lambda}\frac{1}{1-x_i y_j}
=
\sum_{\mu\in\mathbb N^t}
\left(
 \pi_{\sigma(\lambda,\mathrm{NW})}
 \hat K_\mu(x)
\right)
\left(
 \pi_{\sigma(\lambda,\mathrm{SE})}
 K_{\omega_t\mu}(y)
\right),
\end{equation}
where $\omega_t$ is the longest permutation in $\mathfrak{S}_t$.
Throughout this subsection, all compositions are regarded as elements of $\mathbb{N}^n$ by appending trailing zeros whenever necessary, and all Demazure atoms and key polynomials are taken in $n$ variables.

Corollary~\ref{CorKernellambda} gives a different indexing of the
same kernel:
\begin{equation}
\label{EqOurFerrers}
\prod_{(i,j)\in\lambda}\frac{1}{1-x_i y_j}
=
\sum_{\alpha \in \operatorname{Comp}(\lambda)}
\hat K_\alpha(x)K_{\alpha^\lambda}(y).
\end{equation}

To make the relation between the two expansions explicit, define
the integers \(c_{\mu,\alpha}^{\lambda}\) by
\begin{equation}
\label{EqNWAtomExpansion}
\pi_{\sigma(\lambda,\mathrm{NW})}
\hat K_\mu(x)
=
\sum_{\alpha}
c_{\mu,\alpha}^{\lambda}\hat K_\alpha(x).
\end{equation}
For $\alpha\in\operatorname{Comp}(\lambda)$, comparison of the
coefficient of $\hat K_\alpha(x)$ in
\eqref{EqLascouxFerrers} and \eqref{EqOurFerrers} gives
\begin{equation}
\label{EqLascouxCoefficient}
K_{\alpha^\lambda}(y)
=
\sum_{\mu\in\mathbb N^t}
c_{\mu,\alpha}^{\lambda}
\pi_{\sigma(\lambda,\mathrm{SE})}
K_{\omega_t\mu}(y).
\end{equation}

For $\alpha\notin\operatorname{Comp}(\lambda)$, the corresponding coefficient is zero.
Thus, Lascoux's formula organizes the contributions through the northwest and southeast divided-difference-operators associated with \(\lambda\), whereas Corollary~\ref{CorKernellambda} resolves the same kernel directly in the Demazure-atom basis in the \(x\)-variables.
The strictification of \(B_+^\lambda(\alpha)\) identifies the coefficient of each Demazure atom explicitly: it is either the single key polynomial \(K_{\alpha^\lambda}(y)\), or zero when \(\alpha \not\in \operatorname{Comp}(\lambda)\). In this sense, Theorem~\ref{TeoremaKeysLambda}  provides a tableau interpretation of the Demazure-atom coefficients obtained after expanding Lascoux's arbitrary-Ferrers formula.

\subsection{Staircases and truncated staircases}

We next recover the truncated-staircase identity of Azenhas and Emami as a specialization of Corollary~\ref{CorKernellambda}.

\begin{corollary}[Truncated staircases]
\label{CorTruncatedStaircases}
Let $1\leq k,m\leq n$ and $n+1\leq k+m$, and consider the truncated staircase
\[
\lambda=\lambda(n;k,m)
  =\bigl(m^{\,n-m+1},m-1,m-2,\ldots,n-k+1\bigr).
\]
Equivalently,
\[
\lambda_i=\min\{m,n+1-i\},
\qquad 1\leq i\leq k.
\]
Consequently,
\[
(i,j)\in\lambda
\quad\Longleftrightarrow\quad
i+j\leq n+1,
\qquad
1\leq i\leq k,\quad 1\leq j\leq m.
\]

Suppose first that $k\leq m$.
For $\alpha \in\mathbb N^k$, define $\beta(\alpha)=(\beta_1,\ldots,\beta_k)\in\mathbb N^k$ as follows.
Start with the ordered list
\[
L_k=\omega_{k} \alpha=(\alpha_k,\ldots,\alpha_1).
\]
For $i=k,k-1,\ldots,1$, let
\[
\beta_i
=
\max\left\{
\text{the last $\min\{i,n-m+1\}$ entries of $L_i$}
\right\},
\]
and let $L_{i-1}$ be obtained from $L_i$ by deleting one occurrence of $\beta_i$ among its last $\min\{i,n-m+1\}$ entries.
Then
\[
\alpha^\lambda=(0^{\,m-k},\beta(\alpha)).
\]
Therefore, Corollary~\ref{CorKernellambda} specializes to
\[
\prod_{(i,j)\in\lambda}\frac{1}{1-x_i y_j}
=
\sum_{\alpha\in\mathbb N^k}
\hat K_\alpha(x)\,
K_{(0^{\,m-k},\beta(\alpha))}(y).
\]
This is the truncated-staircase expansion of
\cite[Theorem~6]{AzenhasEmami2015}.

Equivalently, $\beta(\alpha)$ can be obtained from the definition of $\alpha^\lambda$.
Namely, form $K(\alpha)$ and, in each column, replace every entry $i$ by
\[
\lambda_i=\min\{m,n+1-i\},
\]
reverse the column, and apply strictification.
The resulting key tableau is $K\bigl(0^{m-k},\beta(\alpha)\bigr)$.

If $m\leq k$, applying the preceding result to the conjugate truncated staircase $\lambda'$ and interchanging $x$ and $y$ gives
\[
\prod_{(i,j)\in\lambda}\frac{1}{1-x_i y_j}
=
\sum_{\alpha \in\mathbb N^m}
K_{\alpha^{\lambda'}}(x)\,
\hat K_\alpha(y),
\]
where $\alpha^{\lambda'}\in\mathbb N^k$ is obtained by the same procedure after interchanging $k$ and $m$.
\end{corollary}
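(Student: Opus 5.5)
The plan is to specialize Corollary~\ref{CorKernellambda} to $\lambda=\lambda(n;k,m)$, viewed as a partition with $k$ rows and $\lambda_1=m$. This takes three steps: show that $\operatorname{Comp}(\lambda)=\mathbb N^k$, identify $\alpha^\lambda$ with $(0^{m-k},\beta(\alpha))$, and then treat the case $m\le k$ by transposition.

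\emph{Existence.} I would apply Proposition~\ref{PropExistenceAlphaLambda}. Let $\operatorname{supp}(\alpha)=\{i_1<\cdots<i_c\}\subseteq[k]$, so that $i_j\le k-c+j$. We have $m\ge k\ge c\ge c-j+1$. We also have $n+1-i_j\ge n+1-k+c-j\ge c-j+1$, because $n\ge k$. Hence $\lambda[i_j]=\min\{m,n+1-i_j\}\ge c-j+1$ for every $j$, and so $\operatorname{Comp}(\lambda)=\mathbb N^k$. The statement about the key tableau is then just Definition~\ref{DefKeylambda} once $\alpha^\lambda$ has been identified. Indeed, $\lambda$ is weakly decreasing, so reversing a column of $K(\alpha)$ after applying $\lambda[\cdot]$ is exactly the construction of $B_+^\lambda(\alpha)$.

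\emph{Identification of $\alpha^\lambda$.} I would use the parking construction of Proposition~\ref{Parkingalpha}, in its level-set form. For each $h\ge1$, the set $I_h(\alpha^\lambda)=\{p:\alpha^\lambda_p\ge h\}$ is the occupied set obtained by parking cars with bounds $\{\lambda[r]:\alpha_r\ge h\}$. By the order-independence in the proof of Proposition~\ref{PropStrInclusion}, this set is the strictification of that multiset of bounds.

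The multiset of all $k$ bounds is $m$, with multiplicity $\min\{k,n-m+1\}$, together with the distinct values $m-1,\ldots,n-k+1$. These values are consecutive, so all $k$ cars fill exactly the positions $m-k+1,\ldots,m$. This forces the leading $0^{m-k}$. Position $p=m-k+i$ can receive car $r$ exactly when $\lambda[r]\ge p$, that is, when $r\le n+1-m+k-i$. Thus the eligible rows form an initial segment of $[k]$ that grows as $p$ decreases. In the list $L_k=(\alpha_k,\ldots,\alpha_1)$ these rows are a final segment.

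The recursion defining $\beta$ is therefore a position-greedy matching. Positions are filled from right to left, and each position takes the largest remaining eligible part. The bookkeeping claim to check is that, after deleting the $k-i$ parts already placed, the eligible remaining entries are exactly the last $\min\{i,n-m+1\}$ entries of $L_i$. I would verify this by induction on $i$: the eligible set gains exactly one new row per step once $i\le k-(n-m+1)$, and one entry is removed per step.

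\emph{The main obstacle.} The crux is showing that this position-greedy assignment and the car-greedy parking produce the same $I_h$ for every $h$. I would fix $h$ and compare the two on the cars with part $\ge h$. The position-greedy rule puts a part $\ge h$ at position $p$ exactly when some eligible car with part $\ge h$ remains. Because the eligibility sets are nested, a Hall-type exchange argument shows that the positions filled this way coincide with the strictification of $\{\lambda[r]:\alpha_r\ge h\}$. Concretely, both sets are the unique maximal set of positions that can be matched to these cars respecting the bounds. Since a composition is determined by its level sets, $\alpha^\lambda=(0^{m-k},\beta(\alpha))$, and Corollary~\ref{CorKernellambda} gives the displayed identity.

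\emph{The case $m\le k$.} Here $\lambda'$ is the truncated staircase with $k$ and $m$ interchanged, and the condition $i+j\le n+1$ is symmetric in $i$ and $j$. The kernel $\prod_{(i,j)\in\lambda}(1-x_iy_j)^{-1}$ is therefore the $\lambda'$-kernel with the roles of $x$ and $y$ swapped. I would apply the case already proved to $\lambda'$, with $m\le k$ now playing the role of $k\le m$, and then interchange the variables.
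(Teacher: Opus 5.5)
Your argument is correct, but it establishes $\alpha^\lambda=(0^{m-k},\beta(\alpha))$ by a different route from the paper.

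\emph{The paper's route.} The paper never analyzes the recursion for $\beta$. It notes that column $r$ of $K(0^{n-k},\omega_k\alpha)$ consists of the entries $n+1-i$ with $i$ in column $r$ of $K(\alpha)$. For columns with entries in $[m]$, being bounded by this column is the same as being bounded by column $r$ of $B_+^\lambda(\alpha)$. Proposition~\ref{PropStrictificationBound} then makes the strictification the largest key tableau below this bound. Finally, \cite[Proposition~3]{AzenhasEmami2015} is cited to identify that largest key tableau as $K(0^{m-k},\beta(\alpha))$.

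\emph{Your route.} You prove the identification internally. You take the level sets of $\alpha^\lambda$ as strictifications of $\{\lambda[r]:\alpha_r\geq h\}$. You observe that the full multiset of bounds fills exactly $\{m-k+1,\ldots,m\}$, which together with Proposition~\ref{PropStrInclusion} gives the leading zeros. You read $\beta$ as a right-to-left position-greedy assignment with nested eligibility windows, and compare it level by level with car-greedy parking.

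\emph{What each buys.} The paper's route is shorter, and it shows that the tableau-level bounds literally coincide with those of Azenhas--Emami, which is what justifies calling the result their Theorem~6. Yours is self-contained and in effect reproves their Proposition~3 in this setting. It also makes $\operatorname{Comp}(\lambda)=\mathbb N^k$ explicit, which the paper leaves implicit.

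Two small points.

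First, your ``Hall-type exchange'' step can be made completely concrete. Set $N_h(p)=\#\{r:\alpha_r\geq h,\ \lambda[r]\geq p\}$. Both the strictified set $D_h$ and your greedy set $G_h$ satisfy $p\in S\iff N_h(p)>|S\cap(p,m]|$. For $D_h$ this follows directly from $d_j=\min\{b_j,d_{j-1}-1\}$; for $G_h$ it holds because every previously placed car is still eligible at $p$. Since this rule determines $S$ from the right, the two sets coincide.

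Second, in the bookkeeping induction the window gains a new row when passing from $i+1$ to $i$ exactly when $i\geq n-m+1$, that is, during the first $k-(n-m+1)$ steps. It does not happen ``once $i\leq k-(n-m+1)$'' as you wrote. The claim this supports, that the eligible entries are the last $\min\{i,n-m+1\}$ entries of $L_i$, is nonetheless correct.
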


\begin{proof}
For $1\leq i\leq k$ and $1\leq j\leq m$, we have
\[
j\leq\lambda_i
\quad\Longleftrightarrow\quad
j\leq\min\{m,n+1-i\}
\quad\Longleftrightarrow\quad
i+j\leq n+1.
\]
Thus the $\lambda$-admissible biwords are precisely the biwords supported on the truncated staircase considered in \cite{AzenhasEmami2015}.

Let $C_r$ be column $r$ of $K(\alpha)$. Column $r$ of the key tableau
\[
K(0^{\,n-k},\omega_{k} \alpha)
\]
consists of the entries $n+1-i$ with $i\in C_r$.
Since the entries of the insertion tableau lie in $[m]$, bounding a strictly increasing column by this column is equivalent to bounding it by the weak column obtained by replacing each entry $n+1-i$ by
\[
\min\{m,n+1-i\}=\lambda_i.
\]
The latter is precisely column $r$ of $B_+^\lambda(\alpha)$.

By Proposition~\ref{PropStrictificationBound}, the columnwise strictification of $B_+^\lambda(\alpha)$ is the largest key tableau below this bound.
On the other hand, \cite[Proposition~3]{AzenhasEmami2015} identifies this largest key tableau as
\[
K(0^{\,m-k},\beta(\alpha)),
\]
where $\beta(\alpha)$ is given by the recursive construction above.
Hence $\operatorname{str}\bigl(B_+^\lambda(\alpha)\bigr)
=
K(0^{\,m-k},\beta(\alpha))$,
and therefore
\[
\alpha^\lambda=(0^{\,m-k},\beta(\alpha)).
\]
The first identity now follows from Corollary~\ref{CorKernellambda}.
The case $m \leq k$ follows by applying the preceding argument to the conjugate partition $\lambda '$ and interchanging the two alphabets.
\end{proof}

When $k=m=n$, the shape $\lambda$ is the full staircase $(n,n-1,\ldots,1)$ and $\beta(\alpha)=\omega_{k}\alpha$.
When $k\leq m$ and $k+m=n+1$, the shape $\lambda$ is the rectangle $(m^k)$ and $\beta(\alpha)=\omega_{k}\alpha^+$, so the corresponding key polynomial is a Schur polynomial.
The case $m\leq k$ follows symmetrically.

\subsection{$m$-symmetric Cauchy identity}

The $m$-symmetric Schur functions were introduced in~\cite{mSym} as two bases related by duality.
A combinatorial realization of this duality at $t=0$ was developed in~\cite{mSchurt0}, yielding a tableau interpretation and a bijective proof of the corresponding Cauchy identity.
The present work arose from investigating whether the machinery developed there extends to Cauchy kernels supported on other Ferrers shapes.
We now show how the identity of~\cite{mSchurt0} follows from our general construction.

Recall that the ring $R_m$ of $m$-symmetric functions consists of functions that are symmetric in the variables $x_{m+1},x_{m+2},\ldots$, with no symmetry imposed on $x_1,\ldots,x_m$.
Its $m$-symmetric Schur functions are indexed by $m$-partitions $\Lambda=( \pmb a;\nu)$, where $\pmb a\in\mathbb N^m$ and $\nu$ is a partition.
We will only use their specialization at $t=0$ and the corresponding dual basis; see \cite{mSchurt0}.

To match the standard \(m\)-symmetric notation, throughout this subsection \(m\) denotes the number of distinguished variables; accordingly, the Ferrers diagram below has width \(N+m\).

\begin{corollary}
\label{RemMSchurSpecialization}
Let $N\geq m$, with $x=(x_1,\ldots,x_N)$ and $y=(y_1,\ldots,y_N)$.
Then
\[
\left(\prod_{i,j=1}^{N}\frac{1}{1-x_i y_j}\right)
\left(
\prod_{\substack{1\leq i,h\leq m\\i+h\leq m+1}}
\frac{1}{1-x_i\hat y_h}
\right)
=
\sum_{\substack{\Lambda=(\mathbf a;\nu)\\
                \ell(\nu)\leq N-m}}
s_\Lambda(x;0)\,
\mathfrak{s}^{*}_\Lambda(y,\hat y).
\]
\end{corollary}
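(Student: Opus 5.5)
We will deduce the corollary from Corollary~\ref{CorKernellambda} applied to the specific Ferrers shape noted after Corollary~\ref{DesigualdadQlambda},
\[
\lambda=(N+m,N+m-1,\ldots,N+1,N^{N-m}),
\]
which has $N$ rows and width $N+m$. We relabel the $y$-alphabet as $y_1,\ldots,y_N,\hat y_1,\ldots,\hat y_m$ via the identification $N+h\leftrightarrow \hat h$. With this relabelling, the kernel $\prod_{(i,j)\in\lambda}(1-x_iy_j)^{-1}$ splits into two factors. The rectangle $(N^N)$ contributes the first factor. The remaining cells $(i,N+h)$ with $N+h\leq\lambda_i$, i.e.\ $i\leq m$ and $h\leq m+1-i$, contribute $\prod_{i+h\leq m+1}(1-x_i\hat y_h)^{-1}$. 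This reproduces the left-hand side exactly. Note that $x$ only involves $x_1,\ldots,x_N$, consistent with $\lambda$ having $N$ rows.

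The right-hand side of Corollary~\ref{CorKernellambda} is then $\sum_{\alpha\in\operatorname{Comp}(\lambda)}\hat K_\alpha(x)K_{\alpha^\lambda}(y,\hat y)$. Since $\lambda_1>\cdots>\lambda_m>\lambda_{m+1}=\cdots=\lambda_N=N$, we use Proposition~\ref{PropExistenceAlphaLambda} to describe $\operatorname{Comp}(\lambda)$. Because $\lambda[i]\geq N$ for every $i$ and $|\operatorname{supp}\alpha|\leq N$, every $\alpha\in\mathbb N^N$ is admissible. The parking construction of Proposition~\ref{Parkingalpha} computes $\alpha^\lambda$. The parts $\alpha_1,\ldots,\alpha_m$ have distinct initial positions $N+m+1-i$, and the parts in rows $m+1,\ldots,N$ all start at $N$ and park in decreasing order into $N,N-1,\ldots$. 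This shows that the $\hat y$-part of $\alpha^\lambda$ records $(\alpha_m,\ldots,\alpha_1)$ up to collisions, and the remaining parts are sorted.

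The key step is to regroup this sum into the $m$-partition indexing. We group $\alpha=(\mathbf a,\gamma)$ with $\mathbf a=(\alpha_1,\ldots,\alpha_m)$ and $\gamma\in\mathbb N^{N-m}$, and collect the atoms $\hat K_{(\mathbf a,\gamma)}(x)$ with fixed $\mathbf a$ and fixed sorted $\gamma^+=\nu$. Since the parking output depends only on $\mathbf a$ and the multiset $\nu$ (order-independence in the proof of Proposition~\ref{PropStrInclusion}), $K_{\alpha^\lambda}$ is constant on each group. The $x$-side sum of atoms over such a group should be exactly $s_\Lambda(x;0)$ with $\Lambda=(\mathbf a;\nu)$, $\ell(\nu)\leq N-m$. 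This matches the tableau description of \cite{mSchurt0}: the right key is prescribed on the first $m$ letters and free (symmetric) on the rest. The $y$-side key polynomial $K_{\alpha^\lambda}(y,\hat y)$ should coincide with $\mathfrak s^*_\Lambda(y,\hat y)$. This follows because, by Proposition~\ref{propcriterialambda} and the stated specialization $\mathcal C^\lambda_r(Q)=\mathcal C^*_r(Q)$, the set $\mathcal S^\lambda(\alpha)$ is literally the tableau set defining $\mathfrak s^*_\Lambda$ in \cite{mSchurt0}.

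We expect the main obstacle to be this identification with the objects of \cite{mSchurt0}. We must check that summing Demazure atoms over the rearrangements of the symmetric block gives $s_\Lambda(x;0)$, presumably via the right-key characterization of $s_\Lambda(x;0)$ in \cite{mSchurt0} together with \eqref{EqAtomTableau}. We must also confirm that the bound and index conventions, including the bottom-to-top reading and the hat ordering, agree. Once the tableau sets match, the identity follows from Theorem~\ref{TeoremaKeysLambda} by the weight-preserving bijection.
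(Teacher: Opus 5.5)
Your proposal follows essentially the same route as the paper: specialize Corollary~\ref{CorKernellambda} to $\lambda=(N+m,\ldots,N+1,N^{N-m})$ with $N+h\leftrightarrow\hat h$, note $\operatorname{Comp}(\lambda)=\mathbb N^N$, group the $\alpha$ by rearrangements of the last $N-m$ entries, and invoke the tableau descriptions of \cite{mSchurt0} for $s_\Lambda(x;0)$ and $\mathfrak s^*_\Lambda(y,\hat y)$, which is exactly what the paper cites. The one imprecision is your hedge ``up to collisions'': the first $m$ cars start at the distinct positions $N+1,\ldots,N+m$ and all other cars start at $N$, so no collisions occur, and $\alpha^\lambda=(0^{N-\ell},\nu_\ell,\ldots,\nu_1,a_m,\ldots,a_1)$ exactly, as the paper records.
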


\begin{proof}
Equation~\eqref{EqKernellambda} specializes to the Cauchy identity proved in \cite{mSchurt0}.
To see this, fix $m\leq N$ and consider the partition
\[
\lambda
=
\bigl(N+m,N+m-1,\ldots,N+1,N^{N-m}\bigr);
\]
equivalently,
\[
\lambda_i=
\begin{cases}
N+m+1-i, & 1\leq i\leq m,\\
N,       & m+1\leq i\leq N.
\end{cases}
\]
Identify the letters
\[
N+1<N+2<\cdots<N+m
\]
with
\[
\hat{1}<\hat{2}<\cdots<\hat{m},
\]
and accordingly set $y_{N+j}=\hat y_j$ for $1\leq j\leq m$.
Since $\lambda_i\geq N$ for every $i$, we have $\operatorname{Comp}(\lambda)=\mathbb N^N$.
Under this identification, the kernel associated with $\lambda$ becomes
\[
\prod_{(i,j)\in \lambda}\frac{1}{1-x_i y_j}
=
\left(
\prod_{\substack{1\leq i,j\leq m\\ i+j\leq m+1}}
\frac{1}{1-x_i\hat y_j}
\right)
\left(
\prod_{i,j=1}^{N}\frac{1}{1-x_i y_j}
\right),
\]
which is precisely the kernel appearing in \cite{mSchurt0}.

Let $
I=\{i:\lambda_i=N\}=\{m+1,\ldots,N\}$.
Since the boundary function $i\mapsto\lambda_i$ is constant on $I$,
permuting the entries of a composition in the positions belonging to $I$ does not change the corresponding $\lambda$-dependent key.
Therefore, for every $\alpha\in\mathbb N^N$ and every $\sigma\in\mathfrak S_I$, we have $\alpha^\lambda=(\sigma\alpha)^\lambda $.

More explicitly, write $\alpha=(a_1,\ldots,a_m,b_1,\ldots,b_{N-m})$, and let $\nu=(\nu_1,\ldots,\nu_\ell)$ be the partition obtained by rearranging $(b_1,\ldots,b_{N-m})$ in weakly decreasing order and deleting its zero parts.
Define the $m$-partition
\[
\Lambda=(\mathbf a;\nu),
\qquad
\mathbf a=(a_1,\ldots,a_m).
\]
The columnwise $\lambda$-operation followed by strictification gives
\[
\alpha^\lambda
=
\bigl(
0^{N-\ell},
\nu_\ell,\ldots,\nu_1,
a_m,\ldots,a_1
\bigr).
\]
In particular, $\alpha^\lambda$ depends on $(b_1,\ldots,b_{N-m})$ only through the partition $\nu$.

Let $\mathcal O_I(\alpha)$ denote the set of distinct compositions obtained from $\alpha$ by permuting the entries in the positions $m+1,\ldots,N$.
The tableau descriptions in \cite{mSchurt0} then give
\[
\sum_{\beta\in\mathcal O_I(\alpha)}
\hat K_\beta(x)
=
s_{\Lambda}(x;0)
\]
and
\[
K_{\alpha^\lambda}
 \bigl(y_1,\ldots,y_N,\hat y_1,\ldots,\hat y_m\bigr)
=
\mathfrak{s}^{*}_{\Lambda}(y,\hat y).
\]
Consequently, grouping the terms in \eqref{EqKernellambda} into $\mathfrak S_I$-orbits yields
\[
\begin{aligned}
\prod_{(i,j)\in\lambda}\frac{1}{1-x_i y_j}
&=
\sum_{\alpha\in\mathbb N^N}
\hat K_\alpha(x)K_{\alpha^\lambda}(y)\\
&=
\sum_{\substack{\Lambda=(\mathbf a;\nu)\\
                 \ell(\nu)\leq N-m}}
s_{\Lambda}(x;0)
\mathfrak{s}^{*}_{\Lambda}(y,\hat y),
\end{aligned}
\]
which recovers the identity proved in \cite{mSchurt0}.
\end{proof}

\appendix

\section{Infinite alphabets}
\label{AppInfiniteAlphabet}

The admissibility condition in the main text is an inequality between a right key and a weak upper bound.
In a finite alphabet, strictification replaces that bound by a largest key tableau whenever the bounded family is nonempty.
Over an infinite alphabet, the bounded family may remain nonempty even when no largest key tableau exists.
Nevertheless, the weak-bound formulation remains valid and yields an infinite-variable Cauchy identity for the $m$-symmetric Schur functions.

Fix $m\geq1$ and consider
\[
\hat{\mathcal A}
=\mathbb Z_{>0}\cup\{\hat1,\ldots,\hat m\},
\qquad
\hat{\mathcal A}_0=\hat{\mathcal A}\cup\{\hat0\},
\]
with the order
\[
1<2<\cdots<\hat0<\hat1<\cdots<\hat m.
\]
The letter $\hat0$ is used only as an upper bound and is never a
tableau entry.
All tableaux have finitely many cells; therefore their reading words,
and the sets of subwords used to define their right keys, are finite.

Define the weakly decreasing upper-bound function
\[
\lambda:\mathbb Z_{>0}\longrightarrow\hat{\mathcal A}_0,
\qquad
\lambda(i)=
\begin{cases}
\widehat{m+1-i},&1\leq i\leq m,\\
\hat0,&i>m.
\end{cases}
\]
A biletter $\binom{i}{j}$, with $i\in\mathbb Z_{>0}$ and
$j\in\hat{\mathcal A}$, is admissible when $j\leq\lambda(i)$.
Every ordinary bottom letter is allowed.
A hatted bottom letter $j=\hat h$ is allowed precisely when
$i+h\leq m+1$.

For a weak composition $\alpha$ of finite support, define
$B_+^\lambda(\alpha)$ by applying $\lambda$ to the entries of
$K(\alpha)$ and reversing each column, as in the finite setting.
This gives a weak upper-bound filling of shape $\alpha^+$.
We will use this filling directly, without requiring a strictification.

\smallskip
\noindent
\textit{Bounds indexed by $m$-partitions.}
Let $\Lambda=(\mathbf a;\nu)$ be an $m$-partition, where
$\mathbf a=(a_1,\ldots,a_m)$, and let $\Lambda^{(0)}$ be the
partition obtained by sorting the parts of $\mathbf a$ and $\nu$.
Define $B_+(\Lambda)$ to be the weak filling of $\Lambda^{(0)}$
whose column $r$ contains the letters
\[
\{i\in\{1,\ldots,m\}:a_i\geq r\},
\]
listed increasingly, followed by enough copies of $\hat0$ to fill that column.
Define $B_+^\lambda(\Lambda)$ by applying $\lambda$ to the ordinary
entries, leaving $\hat0$ unchanged, and reversing each column.

For $\alpha=(a_1,\ldots,a_m,b_1,b_2,\ldots)$, let $\nu(\alpha)$ be the partition obtained by sorting the nonzero parts of $(b_1,b_2,\ldots)$, and set
\[
\Lambda(\alpha)=((a_1,\ldots,a_m);\nu(\alpha)).
\]
The indices in $\{1,\ldots,m\}$ occurring in each column of
$K(\alpha)$ are determined by $\mathbf a$.
All remaining indices are sent to $\hat0$, and their number in
column $r$ is determined by $\nu(\alpha)$.
Consequently,
\[
\alpha^+=\Lambda(\alpha)^{(0)},\qquad
B_+^\lambda(\alpha)=B_+^\lambda\bigl(\Lambda(\alpha)\bigr).
\]
Thus the transformed bound depends only on $\Lambda(\alpha)$.

\begin{example}
Let $m=3$ and $\Lambda=((0,2,1);(3))$, so that
$\Lambda^{(0)}=(3,2,1)$.
Then
\[
B_+(\Lambda)=
\tableau[scY]{2&2&\hat0\\3&\hat0\\\hat0},
\qquad
B_+^\lambda(\Lambda)=
\tableau[scY]{\hat0&\hat0&\hat0\\\hat1&\hat2\\\hat2}.
\]
For every ordinary positive integer $R$, the tableau
\[
T_R=\tableau[scY]{R&R&R\\\hat1&\hat2\\\hat2}
\]
is a key tableau and satisfies $T_R\leq B_+^\lambda(\Lambda)$.
The upper-left entry of any tableau below this bound must be an
ordinary integer, whereas the upper-left entries of the $T_R$ are
unbounded. Hence there is no largest key tableau below the bound,
although the bounded family is nonempty.
\end{example}

\begin{definition}
For an $m$-partition $\Lambda$, define
\[
\mathcal S(\Lambda)=
\left\{
Q\in\operatorname{Tab}_{\mathbb Z_{>0}}(\Lambda^{(0)}):
K_+(Q)\big|_{\{1,\ldots,m\}}
=B_+(\Lambda)\big|_{\{1,\ldots,m\}}
\right\},
\]
and
\[
\mathcal S^*(\Lambda)=
\left\{
P\in\operatorname{Tab}_{\hat{\mathcal A}}(\Lambda^{(0)}):
K_+(P)\leq B_+^\lambda(\Lambda)
\right\}.
\]
Here the restriction of a filling to $\{1,\ldots,m\}$ retains the
positions and values of the entries in that set and ignores all others.
\end{definition}

We extend to this infinite setting the tableau descriptions of the $m$-symmetric Schur functions at $t=0$ and their duals in separated alphabets \cite[Equations~(5.5), (5.8), and Remark~33]{mSchurt0}:
\[
s_\Lambda(x;0)=\sum_{Q\in\mathcal S(\Lambda)}x^{\operatorname{wt}(Q)},
\qquad
\mathfrak s^*_\Lambda(y,\hat y)
=\sum_{P\in\mathcal S^*(\Lambda)}
y^{\operatorname{wt}_{\mathbb Z_{>0}}(P)}
\hat y^{\operatorname{wt}_{\{\hat1,\ldots,\hat m\}}(P)}.
\]

\smallskip
\noindent
\textbf{The Cauchy identity.}
For a weak composition $\alpha$ of finite support, write
\[
F_\alpha(y,\hat y)
=\sum_{\substack{
P\in\operatorname{Tab}_{\hat{\mathcal A}}(\alpha^+)\\
K_+(P)\leq B_+^\lambda(\alpha)}}
y^{\operatorname{wt}_{\mathbb Z_{>0}}(P)}
\hat y^{\operatorname{wt}_{\{\hat1,\ldots,\hat m\}}(P)}.
\]
The equality of bounds above gives $F_\alpha(y,\hat y)=\mathfrak s^*_{\Lambda(\alpha)}(y,\hat y)$.

We also have the disjoint decomposition
\[
\mathcal S(\Lambda)
=\bigsqcup_{\Lambda(\alpha)=\Lambda}
\left\{Q\in\operatorname{Tab}_{\mathbb Z_{>0}}(\alpha^+):
K_+(Q)=K(\alpha)\right\}.
\]
Indeed, the entries of $K_+(Q)$ at most $m$ determine the first
$m$ parts of its indexing composition, and the shape determines
the multiset of the remaining parts.
The tableau formula for Demazure atoms therefore gives
\[
s_\Lambda(x;0)
=
\sum_{\Lambda(\alpha)=\Lambda}\hat K_\alpha(x),
\]
where, for each \(\alpha\), \(\hat K_\alpha\) is computed in the variables \(x_1,\ldots,x_N\) for any \(N\) large enough that \(\operatorname{supp}(\alpha)\subseteq[N]\); appending trailing zero parts does not change it.
Compositions in these sums are viewed as eventually zero sequences, so that different numbers of trailing zeros do not create additional indices.

The restricted-RSK correspondence extends to the present alphabets
by finite truncation.
To see this explicitly, fix $N\geq m$ and $L\geq1$, restrict the
top letters to $\{1,\ldots,N\}$ and the bottom letters to
$\{1,\ldots,L,\hat1,\ldots,\hat m\}$, and relabel
$\hat h$ as $L+h$.
The admissibility condition is then the finite Ferrers condition for
\[
\lambda^{(N,L)}=(L+m,L+m-1,\ldots,L+1,L^{N-m}).
\]
Replacing a bound $\hat0$ by $L$ makes the right-key inequalities
exactly the finite inequalities for this partition.
Thus Theorem~\ref{TeoremaKeysLambda} applies.
Every biword and every tableau pair under consideration occurs in
some such truncation, so the finite bijections give a bijection on
the full alphabets.

Writing $y_{\hat h}=\hat y_h$ and taking generating series yields
\begin{proposition}
\begin{equation}
\label{EqInfiniteKernel}
\begin{aligned}
\prod_{\substack{i\in\mathbb Z_{>0},\ j\in\hat{\mathcal A}\\
                  j\leq\lambda(i)}}\frac{1}{1-x_i y_j}
&=\left(\prod_{i,j\geq1}\frac{1}{1-x_i y_j}\right)
  \left(\prod_{\substack{1\leq i,h\leq m\\i+h\leq m+1}}
        \frac{1}{1-x_i\hat y_h}\right)\\
&=\sum_\alpha\hat K_\alpha(x)F_\alpha(y,\hat y)\\
&=\sum_\Lambda s_\Lambda(x;0)\mathfrak s^*_\Lambda(y,\hat y).
\end{aligned}
\end{equation}
The first sum runs over weak compositions of finite support and
the second over $m$-partitions.
\end{proposition}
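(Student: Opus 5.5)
The plan is to prove the three equalities in \eqref{EqInfiniteKernel} from left to right, reducing everything to finite truncations where Theorem~\ref{TeoremaKeysLambda} applies. The first equality is bookkeeping. Every ordinary letter $j\in\mathbb Z_{>0}$ satisfies $j<\hat0\leq\lambda(i)$, so every pair $(i,j)$ with $j$ ordinary is admissible. A hatted letter $\hat h$ is admissible for the top letter $i$ exactly when $i\leq m$ and $\hat h\leq\widehat{m+1-i}$, that is, when $i+h\leq m+1$. The product therefore splits as displayed, understood as a formal power series in which each monomial receives contributions from only finitely many factors.

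For the second equality I would fix $N\geq m$ and $L\geq1$ and compare coefficients of monomials involving only $x_1,\dots,x_N$, $y_1,\dots,y_L$ and $\hat y$. After the relabelling $\hat h\mapsto L+h$, the admissible biwords with these letters are exactly $\mathcal B_{\lambda^{(N,L)}}$. Corollary~\ref{CorKernellambda}, or more precisely Theorem~\ref{TeoremaKeysLambda} together with the tableau form of $\mathcal S^{\lambda}(\alpha)$ from Definition~\ref{DefSBounds}, gives
\[
\sum_{\alpha\in\mathbb N^N}\hat K_\alpha(x)\sum_{P} y^{\operatorname{wt}(P)},
\]
where $P$ ranges over tableaux in $[L+m]$ with $K_+(P)\leq B_+^{\lambda^{(N,L)}}(\alpha)$.

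The key check is that, for $P$ with ordinary entries at most $L$, the condition $K_+(P)\leq B_+^{\lambda^{(N,L)}}(\alpha)$ agrees with $K_+(P)\leq B_+^\lambda(\alpha)$, where the bound $\hat0$ plays the role of $L$. I would verify this entrywise. An entry of $K_+(P)$ bounded by $\hat0$ must be ordinary, and an ordinary entry is at most $L$ precisely because $P$ uses only ordinary letters up to $L$. The entries bounded by $\widehat{m+1-i}$ correspond to the entries bounded by $L+m+1-i$. Also, $K_+(P)$ computed in $[L+m]$ matches $K_+(P)$ computed in $\hat{\mathcal A}$ under the relabelling, because the relabelling is order preserving and Proposition~\ref{IgualdadKeys} uses only the order.

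Hence the truncated coefficient equals the corresponding truncation of $\sum_\alpha\hat K_\alpha(x)F_\alpha(y,\hat y)$. Here I also need that $\hat K_\alpha(x_1,\dots,x_N)$ vanishes for $\operatorname{supp}(\alpha)\not\subseteq[N]$ when specialized, which holds since Demazure atoms are stable under appending zero variables in the tableau formula. Letting $N,L\to\infty$ gives the second equality coefficientwise. This step is the main obstacle. The subtle point is that no strictification exists, so one cannot use Corollary~\ref{CorGeneratingSlambda} and must work at the level of the weak bound in Definition~\ref{DefSBounds}, confirming that the finite truncation of the weak bound matches exactly.

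The third equality is a regrouping. For each $m$-partition $\Lambda$, collect the $\alpha$ with $\Lambda(\alpha)=\Lambda$. By the observation $B_+^\lambda(\alpha)=B_+^\lambda(\Lambda(\alpha))$ and $\alpha^+=\Lambda(\alpha)^{(0)}$, we get $F_\alpha=\mathfrak s^*_{\Lambda}$, independent of $\alpha$ in the class. Summing $\hat K_\alpha(x)$ over the class gives $s_\Lambda(x;0)$ by the disjoint decomposition of $\mathcal S(\Lambda)$ and the atom tableau formula \eqref{EqAtomTableau}. Each coefficient involves only finitely many terms, since the degree and the variables are bounded, so the regrouping is legitimate.
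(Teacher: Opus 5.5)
Your proposal is correct and follows essentially the paper's route: truncate to $\lambda^{(N,L)}=(L+m,\ldots,L+1,L^{N-m})$, identify the bound $\hat0$ with $L$ so that the weak right-key inequalities become the finite ones, apply Theorem~\ref{TeoremaKeysLambda}, pass to the full alphabets (you do this coefficientwise, the paper via a bijection on the full alphabets), and regroup by $\Lambda(\alpha)$. One small repair: both the vanishing of $\hat K_\alpha$ after setting $x_i=0$ for $i>N$ when $\operatorname{supp}(\alpha)\not\subseteq[N]$ and your use of ``$K_+(P)$ has only letters of $P$'' follow from the fact that every letter of $\sup W(T_r)$ is a letter of $T$, so the entries of $K_+(T)$ are entries of $T$; stability under appending zero variables is a different fact and does not give this.
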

The last equality follows by grouping the first sum according to
$\Lambda(\alpha)$.

\enlargethispage{2\baselineskip}
\bibliographystyle{plain}
\bibliography{Biblio}

@article{mSym,
  author  = {Lapointe, Luc},
  title   = {$m$-symmetric functions, non-symmetric {M}acdonald polynomials and positivity conjectures},
  journal = {Trans. Amer. Math. Soc.},
  volume  = {378},
  number  = {12},
  pages   = {8319--8359},
  year    = {2025}
}

@misc{mSchurt0,
  author = {Lapointe, Luc and Pena, Luis},
  title  = {A new characterization of right keys, and the
            $m$-symmetric {S}chur functions at $t=0$},
  year   = {2026},
  note   = {arXiv:2608.13276}
}

@article{Demazure1974a,
  author  = {Demazure, Michel},
  title   = {D\'esingularisation des vari\'et\'es de {S}chubert g\'en\'eralis\'ees},
  journal = {Ann. Sci. \'Ecole Norm. Sup. (4)},
  volume  = {7},
  pages   = {53--88},
  year    = {1974}
}

@article{Demazure1974b,
  author  = {Demazure, Michel},
  title   = {Une nouvelle formule des caract\`eres},
  journal = {Bull. Sci. Math. (2)},
  volume  = {98},
  number  = {3},
  pages   = {163--172},
  year    = {1974}
}

@incollection{Lascoux1990Schutzenberger,
  author    = {Lascoux, Alain and Sch\"utzenberger, Marcel-Paul},
  title     = {Keys \& standard bases},
  booktitle = {Invariant Theory and Tableaux (Minneapolis, MN, 1988)},
  series    = {IMA Vol. Math. Appl.},
  volume    = {19},
  pages     = {125--144},
  publisher = {Springer, New York},
  year      = {1990}
}

@article{Willis2013,
  author  = {Willis, Matthew J.},
  title   = {A direct way to find the right key of a semistandard {Y}oung tableau},
  journal = {Ann. Comb.},
  volume  = {17},
  number  = {2},
  pages   = {393--400},
  year    = {2013}
}

@book{Fulton1996,
  author    = {Fulton, William},
  title     = {Young Tableaux. {W}ith Applications to Representation Theory and Geometry},
  series    = {London Mathematical Society Student Texts},
  volume    = {35},
  publisher = {Cambridge University Press, Cambridge},
  year      = {1997}
}

@book{Stanley_Fomin_1999,
  author    = {Stanley, Richard P.},
  title     = {Enumerative Combinatorics. {V}ol. 2},
  series    = {Cambridge Studies in Advanced Mathematics},
  volume    = {62},
  note      = {With a foreword by Gian-Carlo Rota and appendix 1 by Sergey Fomin},
  publisher = {Cambridge University Press, Cambridge},
  year      = {1999}
}

@incollection{Lascoux2003,
  author    = {Lascoux, Alain},
  title     = {Double {C}rystal {G}raphs},
  booktitle = {Studies in Memory of {I}ssai {S}chur},
  series    = {Progr. Math.},
  volume    = {210},
  pages     = {95--114},
  publisher = {Birkh\"auser Boston, Boston, MA},
  year      = {2003}
}

@article{FuLascoux2009,
  author  = {Fu, Amy M. and Lascoux, Alain},
  title   = {Non-symmetric {C}auchy kernels for the classical groups},
  journal = {J. Combin. Theory Ser. A},
  volume  = {116},
  number  = {4},
  pages   = {903--917},
  year    = {2009}
}

@article{AzenhasEmami2015,
  author  = {Azenhas, Olga and Emami, Aram},
  title   = {An analogue of the {R}obinson-{S}chensted-{K}nuth correspondence and non-symmetric {C}auchy kernels for truncated staircases},
  journal = {European J. Combin.},
  volume  = {46},
  pages   = {16--44},
  year    = {2015}
}

@article{Mason2008,
  author  = {Mason, Sarah},
  title   = {A Decomposition of {S}chur Functions and an Analogue of the
             {Robinson--Schensted--Knuth} Algorithm},
  journal = {S{\'e}minaire Lotharingien de Combinatoire},
  volume  = {57},
  pages   = {Article B57e, 24 pp.},
  year    = {2008}
}

@incollection{AzenhasEmami2015Growth,
  author    = {Azenhas, Olga and Emami, Aram},
  title     = {Growth Diagrams and Non-symmetric {C}auchy Identities on {NW} ({SE}) Near Staircases},
  booktitle = {Dynamics, Games and Science},
  series    = {CIM Series in Mathematical Sciences},
  pages     = {41--69},
  publisher = {Springer},
  address   = {Cham},
  year      = {2015},
  doi       = {10.1007/978-3-319-16118-1_4}
}

@misc{AzenhasEmami2014NWSE,
  author        = {Azenhas, Olga and Emami, Aram},
  title         = {{NW--SE} Expansions of Non-symmetric {C}auchy Kernels on Near Staircases and Growth Diagrams},
  year          = {2014},
  eprint        = {1412.0420},
  note    = {arXiv:1412.0420}
}

@article{FeiginKhoroshkinMakedonskyi2026,
  author  = {Feigin, Evgeny and Khoroshkin, Anton and Makedonskyi, Ievgen},
  title   = {Cauchy Identities for Staircase Matrices},
  journal = {Journal of the London Mathematical Society},
  volume  = {114},
  number  = {3},
  pages   = {e70680},
  year    = {2026},
  doi     = {10.1112/jlms.70680}
}

@misc{KhoroshkinMakedonskyi2025,
  author  = {Anton Khoroshkin and Ievgen Makedonskyi},
  title   = {Bubble sort and {Howe} duality for staircase matrices},
  note = {arXiv:2502.21184},
  year    = {2025}
}

@article{ChoiKwon2018,
  author  = {Seung-Il Choi and Jae-Hoon Kwon},
  title   = {{L}akshmibai--{S}eshadri paths and non-symmetric {C}auchy identity},
  journal = {Algebras and Representation Theory},
  volume  = {21},
  number  = {6},
  pages   = {1381--1394},
  year    = {2018},
  doi     = {10.1007/s10468-017-9752-6}
}

@article{AzenhasGobetLecouvey2024,
  author  = {Olga Azenhas and Thomas Gobet and C{\'e}dric Lecouvey},
  title   = {Non symmetric {C}auchy kernel, crystals and last passage
             percolation},
  journal = {Tunisian Journal of Mathematics},
  volume  = {6},
  number  = {2},
  pages   = {249--297},
  year    = {2024},
  doi     = {10.2140/tunis.2024.6.249}
}

@misc{AzenhasGonzalezHuangTorres2024,
  author        = {Azenhas, Olga and Gonz{\'a}lez, Nicolle and
                   Huang, Daoji and Torres, Jacinta},
  title         = {Keys and Evacuation via Virtualization},
  year          = {2024},
  eprint        = {2409.12666},
  archivePrefix = {arXiv},
  primaryClass  = {math.CO},
  note = {arXiv:2409.12666v2}
}

@book{BjornerBrenti2005,
  author    = {Bj{\"o}rner, Anders and Brenti, Francesco},
  title     = {{C}ombinatorics of {C}oxeter {G}roups},
  series    = {Graduate Texts in Mathematics},
  volume    = {231},
  publisher = {Springer},
  address   = {New York},
  year      = {2005},
  doi       = {10.1007/3-540-27596-7}
}

@article{KushwahaRaghavanViswanath2025,
  author  = {Kushwaha, Mrigendra Singh and Raghavan, K. N. and Viswanath, Sankaran},
  title   = {Simple procedures for left and right keys of
             semi-standard {Y}oung tableaux},
  journal = {Algebras and Representation Theory},
  volume  = {28},
  number  = {6},
  pages   = {1407--1429},
  year    = {2025},
  doi     = {10.1007/s10468-024-10299-1}
}

@article{AssafQuijada2018,
  author  = {Assaf, Sami and Quijada, Danjoseph},
  title   = {A {P}ieri rule for key polynomials},
  journal = {S{\'e}minaire Lotharingien de Combinatoire},
  volume  = {80B},
  year    = {2018},
  pages   = {Article 78, 12 pp.}
}

\end{document}